\newtheorem{theorem}{Theorem}
\newtheorem{proposition}{Proposition}
\newtheorem{lemma}{Lemma}
\newtheorem{corollary}{Corollary}
\theoremstyle{definition}
\newtheorem{definition}{Definition}
\newtheorem{example}{Example}
\theoremstyle{remark}
\newtheorem{remark}{Remark}
 \newcommand{\mC}{\mathbb{C}}
 \newcommand{\cA}{\mathcal{A}}
 \newcommand{\cU}{\mathcal{U}}
 \newcommand{\Zn}{\mathbb{Z}_2^n}
\newcommand{\ospq}{\mathfrak{osp}_q(1\vert 2)}
\newcommand{\Uqsl}{\mathcal{U}_Q(\mathfrak{sl}_2)}
\renewcommand{\j}{\mathbf{j}}
\newcommand{\h}{\mathbf{h}_m}
\newcommand{\harg}[1]{\mathbf{h}_{#1}}
\begin{document}
	
\title
{The higher rank $q$-deformed Bannai-Ito and Askey-Wilson algebra}
\author{Hendrik De Bie}
\email{Hendrik.DeBie@UGent.be}
\address{Department of Mathematical Analysis\\Faculty of Engineering and Architecture\\Ghent University\\Building S8, Krijgslaan 281, 9000 Gent\\ Belgium.}

\author{Hadewijch De Clercq}
\email{Hadewijch.DeClercq@UGent.be}
\address{Department of Mathematical Analysis\\Faculty of Engineering and Architecture\\Ghent University\\Building S8, Krijgslaan 281, 9000 Gent\\ Belgium.}

\author{Wouter van de Vijver}
\email{Wouter.vandeVijver@UGent.be}
\address{Department of Mathematical Analysis\\Faculty of Engineering and Architecture\\Ghent University\\Building S8, Krijgslaan 281, 9000 Gent\\ Belgium.}

\date{\today}
\keywords{Bannai-Ito algebra, Askey-Wilson algebra, tridiagonal algebra, quantum algebra, Dirac model, superintegrable system}
\subjclass[2010]{16T05, 17B37, 81R10, 81R12} 

\begin{abstract}
	The $q$-deformed Bannai-Ito algebra was recently constructed in the threefold tensor product of the quantum superalgebra $\mathfrak{osp}_q(1\vert 2)$. It turned out to be isomorphic to the Askey-Wilson algebra. In the present paper these results will be extended to higher rank. The rank $n-2$ $q$-Bannai-Ito algebra $\mathcal{A}_n^q$, which by the established isomorphism also yields a higher rank version of the Askey-Wilson algebra, is constructed in the $n$-fold tensor product of $\mathfrak{osp}_q(1\vert 2)$. An explicit realization in terms of $q$-shift operators and reflections is proposed, which will be called the $\mathbb{Z}_2^n$ $q$-Dirac-Dunkl model. The algebra $\mathcal{A}_n^q$ is shown to arise as the symmetry algebra of the constructed $\mathbb{Z}_2^n$ $q$-Dirac-Dunkl operator and to act irreducibly on modules of its polynomial null-solutions. An explicit basis for these modules is obtained using a $q$-deformed $\mathbf{CK}$-extension and Fischer decomposition.
\end{abstract}

\maketitle
	
\tableofcontents

\section{Introduction}
\setcounter{equation}{0}

The Bannai-Ito algebra is an associative algebra over $\mC$  with three generators $K_{12}, K_{13}, K_{23}$ and quadratic relations
\begin{align}
\label{BIrels}
\{K_{12}, K_{23}\} = K_{13} + \alpha_{13}, \quad  \{K_{12}, K_{13}\} = K_{23} + \alpha_{23}, \quad  \{K_{13}, K_{23}\} = K_{12} + \alpha_{12}.
\end{align}
Here $\{ A, B \} = AB + BA$ is the anticommutator and $\alpha_{ij}$ are structure constants. It was first introduced in \cite{Tsujimoto&Vinet&Zhedanov-2012} to encode the bispectral structure of the Bannai-Ito orthogonal polynomials, which were used in \cite{Bannai} for the classification of association schemes. Finite-dimensional irreducible representations of this algebra have been classified \cite{H} and there is a deep connection with Leonard pairs \cite{Brown}. The Bannai-Ito algebra is moreover isomorphic to a degeneration of the double affine Hecke algebra of type $(C_1^{\vee},C_1)$, see \cite{GHecke}.

For the present paper, the connection of (a central extension of) the Bannai-Ito algebra $\cA_3$ with the Lie superalgebra  $\mathfrak{osp}(1|2)$ is of crucial importance. Indeed, $\cA_3$ can be realized within the threefold tensor product of the universal enveloping algebra $\cU(\mathfrak{osp}(1|2))$ as follows. Denote by  $\Gamma$ the Casimir operator of $\mathfrak{osp}(1|2)$ and by \(\Delta\) the coproduct on this algebra. Then taking
\[
K_{12} = \Delta(\Gamma)\otimes 1, \quad K_{23} = 1\otimes \Delta(\Gamma),
\]
the relations (\ref{BIrels}) will be met using appropriate definitions for \(K_{13}\) and \(\alpha_{ij}\), see \cite{2015_Genest&Vinet&Zhedanov_CommMathPhys_336_243}.

The previous tensor product construction can be made very explicit by considering the 3-dimensional Dirac-Dunkl operator with $\mathbb{Z}_2^3$ reflection group. In \cite{DBCMP}, the Bannai-Ito algebra $\cA_3$ appears as symmetry algebra of this Dirac operator. In subsequent work \cite{DBAdv}, this led to the construction of a higher rank Bannai-Ito algebra $\cA_n$ as the symmetry algebra of the $n$-dimensional $\Zn$ Dirac-Dunkl operator. The  connection with $n$-fold tensor products of the Lie superalgebra $\cU(\mathfrak{osp}(1|2))$ is as follows: consider again the Casimir operator $\Gamma$ of $\mathfrak{osp}(1|2)$. Using the coproduct, this Casimir operator can, for a subset $A \subseteq [n]=\{ 1, 2, \ldots, n \}$, be spread out over the components of the tensor product corresponding to this subset. This yields for every subset $A$ an intermediate Casimir operator $\Gamma_A$. The resulting operators, while highly complicated, satisfy the following elegant relations
\begin{align}
\label{BI-Relations}
\{\Gamma_{A},\Gamma_{B}\}=\Gamma_{(A\cup B)\setminus (A\cap B)}+2\,\Gamma_{A\cap B}\Gamma_{A\cup B}+2\,\Gamma_{A\setminus (A\cap B)}\Gamma_{B\setminus(A\cap B)},
\end{align}
as shown in \cite[Proposition 4]{DBAdv}.

Note that similar results have been obtained for the Racah algebra, first introduced in \cite{Gr} to explain the structure of the Racah polynomials which sit atop the Askey scheme of discrete orthogonal polynomials \cite{koekoek}. This algebra is closely related to the Lie algebra $\mathfrak{su}(1,1)$, which is the even subalgebra of $\mathfrak{osp}(1|2)$. A higher rank version of the Racah algebra was obtained in \cite{I} using the generic superintegrable model on the sphere and in \cite{DBW} using $n$-fold tensor products of  $\cU(\mathfrak{su}(1,1))$, concretely realized using the Laplace-Dunkl operator (see \cite{DTAMS, DX}).

The Racah algebra corresponds to the $q = 1$ case of the Askey-Wilson or Zhedanov algebra $AW(3)$ \cite{Zhedanov-1991} underlying the Askey-Wilson polynomials (or $q$-Racah polynomials), which are the most general $q$-orthogonal polynomials in the Askey scheme. 
Also the Bannai-Ito polynomials are closely connected with the Askey scheme: they appear as a suitable $q \rightarrow -1$ limit of the Askey-Wilson polynomials.

This connection led to the consideration of the threefold tensor product of $\mathfrak{osp}_q(1|2)$, the quantum algebra which stands as the \(q\)-deformation of \(\mathfrak{osp}(1\vert 2)\) extended by its grade involution, see \cite{Genest&Vinet&Zhedanov-2016, Equitable Presentation}. In that case the algebra of intermediate Casimirs satisfies relations of the form (\ref{BIrels}) where the anticommutator has to be replaced by the $q$-anticommutator
\[
\{ A,B \}_q = q^{1/2}AB + q^{-1/2}BA,
\]
with $\alpha_{ij}$ now suitable central elements.
The ensuing algebra $\cA_3^q$ is called the $q$-deformation of the Bannai-Ito algebra. Although differently presented, it is isomorphic to the universal Askey-Wilson algebra, which is a central extension \cite{Ko1, Ko2, Terwilliger-2011} of the Askey-Wilson algebra. In this context the $q$-Bannai-Ito polynomials were defined as the Racah coefficients of $\mathfrak{osp}_q(1|2)$ and the relation with the Askey-Wilson polynomials was determined.

The connection between the \(q\)-Bannai-Ito algebra and the quantum algebra \(\mathfrak{osp}_q(1\vert 2)\) is not inherent to the tensor product setting. Indeed, it was shown in \cite{Equitable Presentation} that \(\mathcal{A}_3^q\) can be expressed in terms of the equitable generators of \(\mathfrak{osp}_q(1\vert 2)\) and hence arises as its covariance algebra. Similar results have recently been obtained for the \(q = 1\) case \cite{Baseilhac}. The multifold tensor product formalism will however arise as a quintessential tool for generalization to arbitrary rank. 

This brings us to the main challenge of the present paper, namely to construct a higher rank version of the $q$-deformed Bannai-Ito algebra, which then {\em at the same time} yields a higher rank version of the Askey-Wilson algebra. This $q$-Bannai-Ito algebra $\cA_n^q$ will be constructed within the $n$-fold tensor product of $\mathfrak{osp}_q(1|2)$ as an algebra of intermediate Casimir operators $\Gamma_A^q$, again defined for any subset $A \subseteq [n]$. At this point lies the main difficulty of our work: whereas the construction of $\Gamma_A^q$ for $A$ a set of consecutive integers is relatively straightforward using the Hopf coproduct of $\mathfrak{osp}_q(1|2)$, this is no longer the case when the set $A$ shows holes. In that case an intricate sequence of extension morphisms, defined later in formulas (\ref{def:tau isomorphism}), (\ref{def: extension procedure part 1}), (\ref{def: extension procedure part 2}), has to be applied to the initial Casimir operator.

In a next step we obtain in Theorem \ref{prop: q-anticommutation Gamma} the relations for $\Gamma_A^q$ and  $\Gamma_B^q$ under some technical requirements on the sets \(A\) and \(B\), as
\[
\{\Gamma_A^q,\Gamma_B^q\}_q = \Gamma_{(A\cup B)\setminus(A\cap B)}^q + (q^{1/2}+q^{-1/2})\left(\Gamma_{A\cap B}^q\Gamma_{A\cup B}^q + \Gamma_{A\setminus (A\cap B)}^q\Gamma_{B\setminus (A\cap B)}^q\right).
\]
In the limit $q \rightarrow 1$ this relation clearly reduces to (\ref{BI-Relations}).

We prefer to work with the $q$-Bannai-Ito algebra instead of directly with the universal Askey-Wilson algebra, as the relations of the former exhibit more symmetry and are easier to manipulate.
To complete our construction, we therefore explain in Section \ref{Paragraph: Connection with the AW-algebra} how the algebra $\cA_3^q$ is isomorphic to the universal Askey-Wilson algebra. This is achieved by using the relation between $\mathfrak{osp}_q(1|2)$ and $\cU_q(\mathfrak{sl}_2)$ on the one hand, and the embedding of the universal Askey-Wilson algebra in the threefold tensor product of $\cU_q(\mathfrak{sl}_2)$ on the other hand, see \cite{Huang}. Therefore, $\cA_n^q$ can equally be considered as the higher rank Askey-Wilson algebra.

Our constructions differ from the generalized Askey-Wilson algebras obtained in \cite{Baseilhac&Koizumi-2005}, based on tensor product representations of another quantum group, namely the quantum affine algebra \(U_q(\widehat{\mathfrak{sl}_2})\). In this approach, the Askey-Wilson algebra is viewed as a quotient of the \(q\)-Onsager algebra \cite{Baseilhac2}, whereas our methods rather refer directly to the universal Askey-Wilson algebra as presented in \cite{Terwilliger-2011}.

The Bannai-Ito and Racah algebras are intimately connected with superintegrable systems, see e.g. \cite{DBW, DBsup, I}.
To showcase the power of our new algebraic approach, we therefore construct a superintegrable model related to $\cA_n^q$ which we call the $\Zn$ $q$-Dirac-Dunkl model. It is governed by the  $\Zn$ $q$-Dirac-Dunkl operator, of which we determine an algebra of symmetries given precisely by $\cA_n^q$, see Proposition \ref{prop: Joint symmetries}. We explicitly determine modules of polynomial null-solutions of this operator. We subsequently construct a basis for these modules using the familiar Fischer decomposition and Cauchy-Kowalewska extension procedure, which we derive in this context, and show that this basis diagonalizes an abelian subalgebra of $\cA_n^q$. 
Finally we show in Theorem \ref{thm: Action irreducible} that these modules form irreducible representations of $\cA_n^q$. The main technical difficulty is to determine explicitly the action of suitable generators of $\cA_n^q$ on basis vectors, as given in Theorem \ref{prop: Three-term recurrence relation}. 

In the limit $q \rightarrow 1$ the  $\Zn$ $q$-Dirac-Dunkl operator reduces to the operator defined in Section 5 of \cite{DBAdv}. Our results yield an alternative proof of the irreducibility of the modules in this limit. They should also be compared with the irreducibility result for Racah algebra modules recently obtained in \cite{I2}.

The Askey-Wilson algebra also arises frequently in the context of superintegrable systems. In \cite{Baseilhac2} a set of mutually commuting elements was constructed from iterated coproducts of Askey-Wilson generators. These elements can be used to construct non-local integrals of motion for several quantum integrable models, such as the XXZ spin chain and the sine-Gordon model. Here we extend this iteration of coproducts to an algorithm to obtain more general higher rank algebra generators.

Note that a superintegrable Gaudin system with \(\mathfrak{osp}_q(1\vert 2)\)-symmetry has also been considered in \cite{Musso} in a purely algebraic context and with slightly different conventions on the generators.

Let us finally discuss the connection of our work with the multivariate Askey-Wilson or \(q\)-Racah polynomials defined in \cite{Gasper2, Gasper} as generalizations of the work of \cite{Tratnik}. On the one hand, these polynomials appear as recoupling or $3nj$ coefficients for $n$-fold tensor products of $\mathfrak{su}_q(1,1)$, see \cite{Gqq}. In our Dirac model, this would translate to computing the connection coefficients between different bases of null-solutions of our $\Zn$ $q$-Dirac-Dunkl operator and could serve as a way to define multivariate $q$-Bannai-Ito polynomials (which then are multivariate Askey-Wilson polynomials in disguise). These bases are constructed by permuting the order in which the \(\mathbf{CK}\)-extensions act in the basis, see formula (\ref{eq: Basis for M_k}). 
On the other hand, in \cite{I1} Iliev constructs a commuting family of $q$-difference operators which diagonalize the multivariate Askey-Wilson polynomials. As we have constructed a similar abelian subalgebra of $\cA_n^q$ that diagonalizes our basis, it seems plausible that the action of the diagonal operators in \cite{I1} can be extended to an action of the full algebra $\cA_n^q$. This would moreover complement the realization of the \(q\)-Onsager algebra by Ilievs difference operators proposed in \cite{Baseilhac&Martin-2018}. These highly technical issues will be discussed in our subsequent work \cite{DeBie&DeClercq-2019}.

The paper is organized as follows. In Section \ref{Section: Tensor product approach} we construct the higher rank $q$-Bannai-Ito algebra $\cA_n^q$ in the $n$-fold tensor product of  $\mathfrak{osp}_q(1|2)$. We prove in Theorem \ref{prop: q-anticommutation Gamma} the crucial relation that exists in this algebra and use it to find a generating set in Corollary \ref{cor: Generating set}.
We also explain in detail the connection between $\cA_n^q$ and the universal Askey-Wilson algebra.
In Section \ref{Section: Z2n q-Dirac-Dunkl model} we construct a concrete realization of $\cA_n^q$ using the $\Zn$ $q$-Dirac-Dunkl model. We introduce modules of null-solutions of this \(q\)-Dirac-Dunkl operator and construct an explicit basis. In Section \ref{Section: Action of the symmetry algebra} we show that these modules are irreducible under the action of $\cA_n^q$. We end with some conclusions.
	
\section{The tensor product approach}
\label{Section: Tensor product approach}
Let \(q\) be a non-zero complex number with \(\vert q\vert \neq 1\). For \(n\in \mathbb{N}\), we denote by \([n]_q\) the \(q\)-number
\[
[n]_q = \frac{q^n-q^{-n}}{q-q^{-1}}.
\]
The same notation will be used for operators:
\[
[A]_q=\frac{q^{A}-q^{-A}}{q-q^{-1}}.
\]
We will write \([n]\) for the set \(\{1,2,\dots,n\}\) and \([i;j]\) for the set \(\{i,i+1,\dots,j\}\).
The \(q\)-anticommutator of two operators \(A\) and \(B\) is defined as
\[
\{A,B\}_q = q^{1/2}AB+q^{-1/2}BA.
\]

The quantum superalgebra \(\ospq\) is the \(\mathbb{Z}_2\)-graded unital associative algebra with generators \(A_0,A_-,A_+\) and the grade involution \(P\), satisfying the commutation relations \cite{Kulish}
\begin{align}
\label{def: ospq without K}
	\begin{split}
		[A_0,A_{\pm}] = \pm A_{\pm}, \quad \{A_+,A_-\} = [2A_0]_{q^{1/2}}, \\ [P,A_0]= 0, \quad \{P,A_{\pm}\} = 0, \quad P^2 = 1.
	\end{split}
\end{align}
The algebra \(\ospq\), sometimes also denoted \(\mathcal{U}_q(\mathfrak{osp}(1\vert 2))\), reduces to the universal enveloping algebra \(\mathcal{U}(\mathfrak{osp}(1\vert 2))\) in the limit for the parameter \(q\rightarrow 1\).
Defining the operators
\[
K = q^{A_0/2}, \quad K^{-1} = q^{-A_0/2},
\]
these relations take the equivalent form
\begin{align}
\label{def: ospq with K}
	\begin{split}
		KA_{+}K^{-1} = q^{1/2} A_{+}, \quad KA_{-}K^{-1} = q^{-1/2} A_{-}, \quad \{A_+,A_-\} = \frac{K^2-K^{-2}}{q^{1/2}-q^{-1/2}}, \\ \quad \{P,A_{\pm}\} = 0, \quad 
		\ [P,K] = 0, \quad [P,K^{-1}] = 0, \quad KK^{-1} = K^{-1}K = 1, \quad P^2 = 1.
	\end{split}
\end{align}
With these generators one can construct the following Casimir operator
\begin{equation}
	\label{def: Gamma^q}
	\Gamma^q = \left(-A_+A_-+\frac{q^{-1/2}K^2-q^{1/2}K^{-2}}{q-q^{-1}} \right)P.
\end{equation}
It is easily checked that \(\Gamma^q\) commutes with all elements of \(\ospq\). The expression between brackets in (\ref{def: Gamma^q}) is in fact the sCasimir operator of \(\ospq\), see \cite{Lesniewski}, which commutes with \(A_0\) and anticommutes with \(A_{\pm}\).

The algebra \(\ospq\) can be endowed with a coproduct \(\Delta: \ospq \rightarrow \ospq\otimes\ospq \) acting on the generators as \cite{Genest&Vinet&Zhedanov-2016}
\begin{equation}
	\label{def: Coproduct}
	\Delta(A_{\pm}) = A_{\pm}\otimes KP + K^{-1}\otimes A_{\pm}, \quad \Delta(K) = K\otimes K, \quad \Delta(P) = P\otimes P,
\end{equation}
which satisfies the coassociativity property
\begin{equation}
	\label{Coassociativity}
	(1\otimes\Delta)\Delta = (\Delta\otimes 1)\Delta.
\end{equation}
By direct computation one can express \(\Delta(\Gamma^q) \) as
\begin{align}
\begin{split}
\label{eq:Delta(Gamma)}
\Delta(\Gamma^q) = & \ -q^{1/2}\left(A_-K^{-1}P\otimes A_+K\right) + q^{-1/2}\left(A_+K^{-1}P\otimes A_-K\right) \\ & + \left[\frac12\right]_q\left(K^{-2}P\otimes K^2P\right) + \left(\Gamma^q\otimes K^2P\right) + \left(K^{-2}P\otimes \Gamma^q\right).
\end{split}
\end{align}

This coproduct, together with the counit \(\epsilon: \ospq \rightarrow \mathbb{C}\)
\begin{equation}
\label{def: counit}
\epsilon (A_{\pm}) = 0, \quad \epsilon(K)=1, \quad \epsilon(P) = 1,
\end{equation}
and the antipode \(S: \ospq \rightarrow\ospq\)
\begin{equation}
\label{def: antipode}
S(A_{\pm}) = -q^{\pm 1/2}A_{\pm}P, \quad S(K) = K^{-1}, \quad S(P) = P,
\end{equation}
gives \(\ospq\) the structure of a Hopf algebra. 
Following \cite{Genest&Vinet&Zhedanov-2016}, we will always consider the tensor product algebra \(\ospq\otimes\ospq\) with its standard product law
\begin{equation}
\label{eq: Standard product law}
(a_1\otimes a_2)(b_1\otimes b_2) = a_1b_1\otimes a_2b_2.
\end{equation}
This is in contrast to the graded product rule \((a_1\otimes a_2)(b_1\otimes b_2) = (-1)^{p(a_2)+p(b_1)}a_1b_1\otimes a_2b_2\), with \(p(x)\) the parity of \(x\), used in \cite{Kulish}. This extra use of the parity would be redundant here, as we have chosen to treat the grade involution \(P\) as a separate generator.

The rank 1 \(q\)-deformed Bannai-Ito algebra was introduced in \cite{Genest&Vinet&Zhedanov-2016} within the threefold tensor product of \(\ospq\). Three types of Casimir operators were identified, namely the initial Casimir operators
\begin{equation}
\label{def: Initial Casimirs}
	\Gamma_{\{1\}}^{q} = \Gamma^q\otimes1\otimes1, \quad \Gamma_{\{2\}}^{q} = 1\otimes \Gamma^q\otimes 1, \quad \Gamma_{\{3\}}^{q}=1\otimes1\otimes\Gamma^q,
\end{equation} 
the intermediate Casimir operators
\begin{equation}
\label{def: Intermediate Casimirs}
	\Gamma_{\{1,2\}}^q = \Delta(\Gamma^q)\otimes1, \quad \Gamma_{\{2,3\}}^q = 1\otimes\Delta(\Gamma^q),
\end{equation}
and the total Casimir operator
\begin{equation}
\label{def: Total Casimir}
	\Gamma_{\{1,2,3\}} = (1\otimes\Delta)\Delta(\Gamma^q).
\end{equation}
Defining \(\Gamma_{\{1,3\}}^q\) through the relation
\begin{equation}
\label{eq: First BI-relation}
\{\Gamma_{\{1,2\}}^q,\Gamma_{\{2,3\}}^q\}_q = \Gamma_{\{1,3\}}^q + (q^{1/2}+q^{-1/2})\left(\Gamma_{\{1\}}^q\Gamma_{\{3\}}^q + \Gamma_{\{2\}}^q\Gamma_{\{1,2,3\}}^q\right),
\end{equation}
one can show that these operators satisfy the relations
\begin{equation}
	\label{eq: q-Bannai-Ito rank 1}
	\{\Gamma_{\{i,j\}}^q,\Gamma_{\{j,k\}}^q\}_q = \Gamma_{\{i,k\}}^q + (q^{1/2}+q^{-1/2})\left(\Gamma_{\{i\}}^q\Gamma_{\{k\}}^q+\Gamma_{\{j\}}^q\Gamma_{\{i,j,k\}}^q\right),
\end{equation}
where \((ijk)\) is an even permutation of \(\{1,2,3\}\).

These relations coincide with the defining relations \cite{Tsujimoto&Vinet&Zhedanov-2012} of the Bannai-Ito algebra in the limit \(q\rightarrow 1\), hence the algebra generated by \(\Gamma_{\{1,2\}}^q\), \( \Gamma_{\{2,3\}}^q\) and \(\Gamma_{\{1,3\}}^q\) was identified as a \(q\)-deformed Bannai-Ito algebra, denoted here by \(\mathcal{A}_3^q\).
In the next paragraphs, we will introduce the correct definitions to extend this algebra to the multifold tensor product setting.

\subsection{Coaction and fourfold tensor products}

Before moving up to multifold tensor products, we will first need to enhance our understanding of the threefold tensor product case. Let us define \(\mathcal{I}\) as the subalgebra of \(\ospq\) generated by \(A_-K\), \(A_+K\), \(K^2P\) and \(\Gamma^q\). This choice of generators is motivated by the observation that \(\Delta(\Gamma^q)\in\ospq\otimes\mathcal{I}\), as follows from (\ref{eq:Delta(Gamma)}). This subalgebra has the following interesting properties.

\begin{proposition}
	\label{lemma: prop tau}
	The algebra \(\mathcal{I}\) is a left coideal subalgebra of \(\ospq\), as well as a left \(\ospq\)-comodule with coaction \(\tau: \mathcal{I}\to\ospq\otimes\mathcal{I}\) defined by
	\begin{align}
	\begin{split}
	\label{def:tau isomorphism}
	\tau(A_-K) = & \ K^2P\otimes A_-K, \\
	\tau(A_+K) = & \ (K^{-2}P\otimes A_+K)+q^{-1/2}(q-q^{-1})(A_+^2P\otimes A_-K) \\&+ q^{-1/2}(q^{1/2}-q^{-1/2})(A_+K^{-1}P\otimes K^2P)\\& +q^{-1/2}(q-q^{-1})(A_+K^{-1}P\otimes \Gamma^q), \\
	\tau(K^2P) = & \ 1\otimes K^2P - (q-q^{-1})(A_+K\otimes A_-K),\\
	\tau(\Gamma^q) = & \ 1\otimes \Gamma^q.
	\end{split}
	\end{align}
\end{proposition}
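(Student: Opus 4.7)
The claim decomposes into two essentially independent statements: (i) the coideal property $\Delta(\cI)\subseteq \ospq\otimes \cI$, and (ii) that the explicit formulas in (\ref{def:tau isomorphism}) define a left $\ospq$-coaction $\tau$ on $\cI$ which, as one sees immediately on $K^2P$, is \emph{not} equal to $\Delta\vert_{\cI}$. My plan is to treat these two assertions in turn.

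For (i) I would just apply $\Delta$ to each of the four generators using (\ref{def: Coproduct}) and the standard product law (\ref{eq: Standard product law}). Short computations give $\Delta(K^2P) = K^2P\otimes K^2P$ and $\Delta(A_{\pm}K) = A_{\pm}K\otimes K^2P + 1\otimes A_{\pm}K$, whereas $\Delta(\Gamma^q)$ is read off directly from (\ref{eq:Delta(Gamma)}). In each case the second tensor factor is a monomial in $A_{\pm}K,\ K^2P,\ \Gamma^q$, so it lies in $\cI$.

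For (ii), the right-hand sides of (\ref{def:tau isomorphism}) visibly land in $\ospq\otimes \cI$, but for $\tau$ to extend from generators to a well-defined algebra morphism I must check that it respects the defining relations of $\cI$ as a subalgebra of $\ospq$. Using (\ref{def: ospq with K}) and writing $u=A_-K$, $v=A_+K$, $w=K^2P$, one derives the $q$-commutations $wu = -q^{-1}uw$ and $wv = -q\,vw$, the quadratic identity $q^{-1/2}uv + q^{1/2}vu = (w^2-1)/(q^{1/2}-q^{-1/2})$ (noting that $w^2 = K^4$), together with the centrality of $\Gamma^q$. These identities must then be verified after substituting the $\tau$-images and expanding via (\ref{eq: Standard product law}). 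The $q$-commutations follow from short direct manipulations, while the quadratic relation is the delicate point: $\tau(v)$ has four summands, $\tau(w)$ two, so $\tau(w)^2$ produces many cross-terms, and one must repeatedly invoke the very relations of $\cI$ being verified in order to collapse the resulting expression.

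It then remains to check the two coaction axioms on generators. The counit axiom $(\epsilon\otimes 1)\tau = \mathrm{id}$ is essentially immediate from (\ref{def: counit}), since $\epsilon(K^{\pm 2}P) = 1$ and $\epsilon$ annihilates every term that carries a factor of $A_+$, so each formula in (\ref{def:tau isomorphism}) collapses to the correct generator. The coassociativity $(1\otimes\tau)\tau = (\Delta\otimes 1)\tau$ is the principal obstacle I foresee: on each generator both sides must be expanded inside $\ospq\otimes \ospq\otimes \cI$, using (\ref{def: Coproduct}), (\ref{eq:Delta(Gamma)}) and (\ref{def:tau isomorphism}) iteratively, then matched term by term. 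The worst case is $A_+K$, where applying $1\otimes\tau$ to the four summands of $\tau(A_+K)$ produces a sizeable branching that must be regrouped using precisely the relations of $\cI$ already established in the previous step.
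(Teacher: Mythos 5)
Your proposal is correct and takes essentially the same approach as the paper: verify the coideal property and the two coaction axioms generator by generator, and check that $\tau$ preserves the defining relations of $\mathcal{I}$ (you identify exactly the same list of relations, including $\{A_+K,A_-K\}_q=((K^2P)^2-1)/(q^{1/2}-q^{-1/2})$ and the centrality of $\Gamma^q$). The paper likewise dismisses the remaining verifications as lengthy but straightforward computations, so your level of detail matches its proof.
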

\begin{proof}
	We have to verify the following requirements:
	\begin{enumerate}[(i)]
		\item \(\mathcal{I}\) is a left coideal subalgebra, i.e.\ \(\Delta(\mathcal{I}) \subset \ospq\otimes\mathcal{I}\).
		\item The mapping \(\tau\) is a well-defined algebra morphism, i.e.\ it preserves the algebra relations in \(\mathcal{I}\).
		\item The algebra morphism \(\tau\) is a left coaction map, i.e.\ it has the properties
		\begin{align}
		\label{comodule prop 1}
		(1\otimes\tau)\tau = (\Delta\otimes 1)\tau, \\
		\label{comodule prop 2}
		(\epsilon\otimes 1)\tau \cong \mathrm{id},
		\end{align}
		where the isomorphism in the last line is the canonical identification of \(\mathbb{C}\otimes\mathcal{I}\) with \(\mathcal{I}\).
	\end{enumerate}
	The properties (i) and (iii) can easily be checked on each of the generators of \(\mathcal{I}\) using (\ref{def: Coproduct}), (\ref{def: counit}) and (\ref{def:tau isomorphism}). By (\ref{def: ospq with K}), the algebra \(\mathcal{I}\) is defined through the relations
	\begin{align*}
	(K^2P)(A_-K) &= -q^{-1}(A_-K)(K^2P), \ \ &(K^2P)(A_+K) &= -q(A_+K)(K^2P), \quad\\ \{A_+K, A_-K\}_q &= \frac{(K^2P)^2-1}{q^{1/2}-q^{-1/2}}, &[\Gamma^q, A_+K] &= [\Gamma^q, A_-K]= [\Gamma^q, K^2P] = 0,
	\end{align*}
	which are invariant under \(\tau\), as follows from a lengthy but straightforward calculation.
\end{proof}

The expressions (\ref{def:tau isomorphism}) originate from the fact that the element \(\Gamma_{\{1,3\}}^q\), defined through the relation (\ref{eq: First BI-relation}), can be written as
\[
\Gamma_{\{1,3\}}^q = (1\otimes\tau)\Delta(\Gamma^q),
\]
as one can check by a direct computation.

The mapping \(\tau\) and the coproduct \(\Delta\) will now enable us to step up to the case \(n = 4\) and define the operators \(\Gamma_A^q \in \ospq^{\otimes 4}\) for all \(A\subseteq \{1,2,3,4\}\). For sets \(A\) of consecutive integers we may apply the familiar extension procedure to obtain the initial Casimir operators
\begin{align*}
	\begin{split}
		\Gamma_{\{1\}}^q = & \,\Gamma^q\otimes1\otimes1\otimes1, \quad \Gamma_{\{2\}}^q =  1\otimes \Gamma^q\otimes1\otimes1,\\
		\Gamma_{\{3\}}^q = & \,1\otimes1\otimes\Gamma^q\otimes1, \quad \Gamma_{\{4\}}^q = 1\otimes1\otimes1\otimes\Gamma^q, 
	\end{split}
\end{align*}
the intermediate Casimir operators
\begin{equation}
	\begin{gathered}
		\Gamma_{\{1,2\}}^q = \Delta(\Gamma^q)\otimes1\otimes1, \quad \Gamma_{\{2,3\}}^q = 1\otimes\Delta(\Gamma^q)\otimes1, \quad \Gamma_{\{3,4\}}^q = 1\otimes1\otimes\Delta(\Gamma^q), \nonumber\\
		\Gamma_{\{1,2,3\}}^q = (1\otimes\Delta)\Delta(\Gamma^q)\otimes1, \quad \Gamma_{\{2,3,4\}}^q = 1\otimes(1\otimes\Delta)\Delta(\Gamma^q), 
	\end{gathered}
\end{equation}
and the total Casimir operator
\begin{equation*}
	\Gamma_{\{1,2,3,4\}}^q = (1\otimes1\otimes\Delta)(1\otimes\Delta)\Delta(\Gamma^q).
\end{equation*}
For \(A = \emptyset\) we will use the scalar element
\[
\Gamma_{\emptyset}^q = -\left[\frac12\right]_q.
\]
We can also construct new intermediate Casimir operators, corresponding to sets of non-consecutive integers, i.e. sets with \emph{holes}. This is done using the coaction \(\tau\), which creates these holes:
\begin{equation}
	\begin{gathered}
		\label{def:q-Bannai-Ito rank 2}
		\Gamma_{\{1,3\}}^q = \left((1\otimes\tau)\Delta(\Gamma^q)\right)\otimes 1, \quad \Gamma_{\{2,4\}}^q = 1\otimes\left((1\otimes\tau)\Delta(\Gamma^q)\right), \\
		\Gamma_{\{1,4\}}^q = (1\otimes\Delta\otimes1)(1\otimes\tau)\Delta(\Gamma^q), \\
		\Gamma_{\{1,2,4\}}^q = (1\otimes1\otimes\tau)(1\otimes\Delta)\Delta(\Gamma^q), \quad \Gamma_{\{1,3,4\}}^q = (1\otimes1\otimes\Delta)(1\otimes\tau)\Delta(\Gamma^q).
	\end{gathered}
\end{equation}
The rationale behind these definitions will be explained for arbitrary multifold tensor products in Section \ref{Paragraph: Higher rank q-BI}.
By (\ref{comodule prop 1}) the operator \(\Gamma_{\{1,4\}}^q\) can equally be written as
\begin{equation}
\label{eq: Gamma 14 alternative expression}
	\Gamma_{\{1,4\}}^q = (1\otimes1\otimes\tau)(1\otimes\tau)\Delta(\Gamma^q),
\end{equation}
whereas due to the coassociativity (\ref{Coassociativity}), \(\Gamma_{\{1,2,4\}}^q\) allows the alternative expression
\begin{equation}
\label{eq: Gamma 124 alternative expression}
	\Gamma_{\{1,2,4\}}^q = (\Delta\otimes1\otimes1)(1\otimes\tau)\Delta(\Gamma^q).
\end{equation}
Explicit expressions for these operators, obtained by direct computation using (\ref{def: Coproduct}), (\ref{eq:Delta(Gamma)}) and (\ref{def:tau isomorphism}), can be found in Appendix A.

The definitions (\ref{def:tau isomorphism}) and (\ref{def:q-Bannai-Ito rank 2}) are motivated by the following identities. By direct calculation, one can verify that the relation
\begin{equation}
\label{eq: q-BI relations rank 2}
\{\Gamma_A^q,\Gamma_B^q\}_q = \Gamma_{C}^q + (q^{1/2}+q^{-1/2})\left(\Gamma_{A\cap B}^q\Gamma_{A\cup B}^q + \Gamma_{A\setminus (A\cap B)}^q\Gamma_{B\setminus (A\cap B)}^q\right).
\end{equation}
holds for \((A,B,C)\) any cyclic permutation of
\begin{gather*}
(\{1,2\},\{2,3\},\{1,3\}), \quad  (\{2,3\},\{3,4\},\{2,4\}), \\ (\{1,3\},\{3,4\},\{1,4\}), \quad (\{1,2\},\{2,4\},\{1,4\}), \\
\begin{align*}
(\{1,2\},\{2,3,4\},\{1,3,4\}), \quad (\{1,2,3\},\{3,4\},\{1,2,4\}), \quad (\{1,2,3\},\{2,3,4\},\{1,4\}).
\end{align*}
\end{gather*}
These relations are the extensions of the rank 1 \(q\)-Bannai-Ito relations (\ref{eq: q-Bannai-Ito rank 1}) to the fourfold tensor product. Anticipating the results of the next subsection, we state that the operators \(\Gamma_A^q\) with \(A\subseteq\{1,2,3,4\}\) will generate an algebra, which allows an embedding of \(\mathcal{A}_3^q\) and hence will be denoted the rank 2 \(q\)-deformed Bannai-Ito algebra.

\subsection{The higher rank $q$-deformed Bannai-Ito algebra}
\label{Paragraph: Higher rank q-BI}
We will now consider the \(n\)-fold tensor product algebra \(\ospq^{\otimes n}\) for arbitary \(n\geq 3\), governed by the multifold analog of the standard product rule (\ref{eq: Standard product law}):
\[
(a_1\otimes\dots\otimes a_n)(b_1\otimes\dots\otimes b_n) = a_1b_1\otimes\dots\otimes a_nb_n.
\]
To each set \(A\subseteq \{1,2\dots,n\}\) we will associate an element \(\Gamma_A^q\) of \(\ospq^{\otimes n}\), constructed by applying to \(\Gamma^q\) an intricate sequence of extension morphisms and inserting the unit on the lowest and highest positions in the tensor product. More precisely, we define
\begin{equation}
\label{def: extension procedure part 1}
\Gamma_A^q = \underbrace{1\otimes\dots\otimes 1}_{\min(A)-1\ \mathrm{times}}\otimes \left(\overrightarrow{\prod_{k=\min(A)+1}^{\max(A)}}\tau_{k-1,k}^{A}\right)(\Gamma^q)\otimes \underbrace{1\otimes\dots\otimes 1}_{n-\max(A)\ \mathrm{times}}.
\end{equation}
The arrow indicates that the morphisms \(\tau_{k-1,k}^{A}\) should be applied in order of increasing \(k\). Their definition, which reveals the actual extension algorithm, depends on whether \(k-1\) and \(k\) are elements of the set \(A\):
\begin{equation}
\label{def: extension procedure part 2}
\tau_{k-1,k}^{A}=\left\{
\arraycolsep=1.4pt\def\arraystretch{1.5}\begin{array}{lll}
\underbrace{1\otimes\dots\otimes 1}_{k-\min(A)-1\ \mathrm{times}}\otimes\Delta & \qquad \mathrm{if}\ k-1\in A \ \mathrm{and} \ k\in A, \\
(\underbrace{1\otimes\dots\otimes 1}_{k-\min(A)\ \mathrm{times}}\otimes\tau)(\underbrace{1\otimes\dots\otimes 1}_{k-\min(A)-1\ \mathrm{times}}\otimes\Delta) & \qquad\mathrm{if}\ k-1\in A \ \mathrm{and} \ k\notin A,\\
\underbrace{1\otimes\dots\otimes 1}_{k-\min(A)-1\ \mathrm{times}}\otimes\Delta\otimes 1
&\qquad\mathrm{if}\ k-1\notin A \ \mathrm{and}\ k\notin A,\\
\mathrm{id} & \qquad\mathrm{if}\ k-1\notin A \ \mathrm{and} \ k\in A,
\end{array}
\right.
\end{equation}
where \(\mathrm{id}\) denotes the identity mapping on \(\ospq^{\otimes (k-\min(A)+1)}\). The idea behind this definition is as follows. When two consecutive indices are present in the set \(A\), then the extension with respect to these indices is done by means of the coproduct \(\Delta\). Consider now the case when instead a \emph{hole} is present in the set \(A\), i.e. one of two consecutive indices \(k-1\) and \(k\) is an element of \(A\), the other is not. Then this hole must first be created by applying \( (\underbrace{1\otimes\dots\otimes1}_{k-\min(A) \ \textrm{times}}\otimes\tau)(\underbrace{1\otimes\dots\otimes1}_{k-\min(A)-1 \ \textrm{times}}\otimes\Delta)\). This amounts to creating first the term corresponding to the next element of \(A\) larger than \(k-1\) using \(\Delta\), and then inserting the hole at position \(k\) using \(\tau\). The hole may be enlarged using \(1\otimes\dots\otimes 1\otimes\Delta\otimes 1\) if necessary, i.e. if in the next step the index \(k+1\) is also not an element of \(A\). Note that upon creating the hole we have in fact applied two extension morphisms in one step. This will be compensated by applying the identity mapping the first time we encounter an index \(k'\) such that \(k'\in A\), \(k'-1\notin A\). This is where we \emph{close the hole}.

\begin{example}
We illustrate the extension algorithm for the case \(n = 7\) and \(A = \{2,5,6\}\). In this case
\[
\Gamma_{\{2,5,6\}}^q =1\otimes \left(\overrightarrow{\prod_{k=3}^{6}\tau_{k-1,k}^{\{2,5,6\}}}\right)(\Gamma^q)\otimes 1,
\]
where the extension morphisms \(\tau_{k-1,k}^{\{2,5,6\}}\) are defined as follows:

\begin{description}
	\item[\(k = 3\)] Since \(2\in A\) and \(3\notin A\), we have \(\tau_{2,3}^{\{2,5,6\}} = (1\otimes\tau)\Delta\). This amounts to creating the hole.
	\item[\(k = 4\)] Both 3 and 4 are not contained in \(A\), so we must enlarge the hole by applying \(1\otimes\Delta\otimes 1\).
	\item[\(k = 5\)] As \(4\notin A\) and \(5\in A\), we must close the hole at this point. Here \(\tau_{4,5}^{\{2,5,6\}} = \mathrm{id}\), i.e. doing nothing.
	\item[\(k = 6\)] Both 5 and 6 are elements of \(A\), hence we apply \(1\otimes1\otimes1\otimes\Delta\).
\end{description}
\end{example}

The properties of our extension morphisms allow us to find equivalent expressions for some of the operators \(\Gamma_A^q\). An example of such an expression that will be particularly useful is presented in the following lemma. Recall that by \([i;j]\) we denote the set \(\{i,i+1,\dots,j\}\).

\begin{lemma}
	\label{lemma: alternative expression one hole}
	For \(1< j\leq k\), one has:
	\[
	\Gamma_{[1;j-1]\cup\{k+2\}}^q = (\underbrace{1\otimes\dots\otimes1}_{k\ \mathrm{times}}\otimes\tau)\Gamma_{[1;j-1]\cup\{k+1\}}^q.
	\]
\end{lemma}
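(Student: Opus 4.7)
The plan is to unfold both sides of the claimed identity using the recursive definition (\ref{def: extension procedure part 1})--(\ref{def: extension procedure part 2}) and to reduce the statement to the left coaction axiom (\ref{comodule prop 1}). First I would compare the extension sequences for the two sets $A=[1;j-1]\cup\{k+1\}$ and $A'=[1;j-1]\cup\{k+2\}$. The morphisms $\tau_{m-1,m}^{A}$ and $\tau_{m-1,m}^{A'}$ agree at every step $m=2,\dots,k$: plain coproducts for $m\leq j-1$, the hole-creating morphism $(1^{\otimes(j-1)}\otimes\tau)(1^{\otimes(j-2)}\otimes\Delta)$ at $m=j$, and the hole-enlargements $E_m:=1^{\otimes(m-2)}\otimes\Delta\otimes 1$ for $j+1\leq m\leq k$. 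They differ only at the end: $A$ closes the hole at $m=k+1$ (identity), while $A'$ performs one more enlargement $E_{k+1}$ before closing at $m=k+2$. Consequently $\Gamma_{[1;j-1]\cup\{k+2\}}^q = E_{k+1}\,\Gamma_{[1;j-1]\cup\{k+1\}}^q$, and the lemma reduces to showing
\[
(1^{\otimes(k-1)}\otimes\Delta\otimes 1)\,\Gamma_{[1;j-1]\cup\{k+1\}}^q = (1^{\otimes k}\otimes\tau)\,\Gamma_{[1;j-1]\cup\{k+1\}}^q.
\]

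Next I would describe the internal structure of $\Gamma_{[1;j-1]\cup\{k+1\}}^q$ explicitly. Since $\mathcal{I}$ is a left coideal subalgebra (Proposition~\ref{lemma: prop tau}), iteration of $\Delta$ yields $\Delta^{(j)}(\Gamma^q)\in\ospq^{\otimes(j-1)}\otimes\mathcal{I}$, where $\Delta^{(r)}\colon\ospq\to\ospq^{\otimes r}$ denotes the iterated coproduct, unambiguous by coassociativity. Each enlargement $E_m$ acts by $\Delta$ on the $(m-1)$-th tensor factor and leaves the rightmost factor (still in $\mathcal{I}$) untouched, so the cumulative effect of $E_{j+1},\dots,E_k$ is to apply $\Delta^{(k-j+1)}$ to the single $\ospq$-factor produced by $\tau$. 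This yields the compact formula
\[
\Gamma_{[1;j-1]\cup\{k+1\}}^q = \bigl(\mathrm{id}^{\otimes(j-1)}\otimes\Delta^{(k-j+1)}\otimes\mathrm{id}\bigr)\bigl(\mathrm{id}^{\otimes(j-1)}\otimes\tau\bigr)\Delta^{(j)}(\Gamma^q).
\]
Substituting this into both sides of the reduced identity, the claim becomes
\[
[\Delta^{(r+1)}\otimes\mathrm{id}]\,\tau = [\Delta^{(r)}\otimes\tau]\,\tau\qquad (r=k-j+1)
\]
as maps $\mathcal{I}\to\ospq^{\otimes(r+1)}\otimes\mathcal{I}$.

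Finally I would prove this last identity by induction on $r$. The base case $r=1$ is precisely the coaction axiom (\ref{comodule prop 1}). For the inductive step, writing $\Delta^{(r+1)}=(\Delta^{(r)}\otimes\mathrm{id})\Delta$ by coassociativity and then applying (\ref{comodule prop 1}) yields
\[
[\Delta^{(r+1)}\otimes\mathrm{id}]\,\tau = (\Delta^{(r)}\otimes\mathrm{id}^{\otimes 2})(\Delta\otimes\mathrm{id})\,\tau = (\Delta^{(r)}\otimes\mathrm{id}^{\otimes 2})(\mathrm{id}\otimes\tau)\,\tau = (\Delta^{(r)}\otimes\tau)\,\tau.
\]
The main obstacle throughout is bookkeeping: correctly tracking on which tensor slot each morphism $\tau_{m-1,m}^A$ acts across the sequence of extensions. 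Once the compact description in the second step is established, the identity follows transparently from the interplay of coassociativity and the left coaction axiom.
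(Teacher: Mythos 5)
Your proof is correct, and it ultimately rests on the same two pillars as the paper's argument --- coassociativity (\ref{Coassociativity}) and the left-coaction axiom (\ref{comodule prop 1}) --- but it organizes them differently. The paper proves the lemma by induction on \(k\): the base case \(k=j\) is the coaction axiom applied inside the \((j-1)\)-fold padding, and the inductive step unfolds one hole-enlargement, invokes the induction hypothesis, commutes the resulting \(\Delta\) past \(\tau\) via (\ref{comodule prop 1}), and then invokes the hypothesis again in reverse. You instead collapse all the enlargements at once into a single iterated coproduct, obtaining the closed form \(\Gamma^q_{[1;j-1]\cup\{k+1\}}=(\mathrm{id}^{\otimes(j-1)}\otimes\Delta^{(k-j+1)}\otimes\mathrm{id})(\mathrm{id}^{\otimes(j-1)}\otimes\tau)\Delta^{(j)}(\Gamma^q)\), and reduce the lemma to the standalone comodule identity \((\Delta^{(r+1)}\otimes\mathrm{id})\,\tau=(\Delta^{(r)}\otimes\tau)\,\tau\) on \(\mathcal{I}\), which follows in one line. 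This buys a cleaner separation between the combinatorial bookkeeping (the closed form, which also makes visible why only the last factor of the \(\Delta^{(r)}\)-block is touched by the extra enlargement) and the Hopf-theoretic content, at the cost of having to state and justify the closed form, which the paper's route avoids. Two small remarks: your final ``induction on \(r\)'' is vestigial --- the displayed chain of equalities never uses the inductive hypothesis, so it is in fact a direct proof for all \(r\geq 1\), with the convention \(\Delta^{(1)}=\mathrm{id}\) making the case \(r=1\) literally (\ref{comodule prop 1}); and, like the paper, you implicitly work with the unpadded representatives in \(\ospq^{\otimes(k+1)}\) when writing \(\Gamma^q_{[1;j-1]\cup\{k+2\}}=E_{k+1}\Gamma^q_{[1;j-1]\cup\{k+1\}}\), which is harmless but worth flagging.
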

\begin{proof}
	We prove this by induction on \(k\). For \(k = j\) it follows from the extension procedure (\ref{def: extension procedure part 1})--(\ref{def: extension procedure part 2}) that
	\begin{align*}
	\Gamma_{[1;j-1]\cup\{j+2\}}^q & = (\underbrace{1\otimes\dots\otimes 1}_{j-1\ \mathrm{times}}\otimes\Delta\otimes 1)(\underbrace{1\otimes\dots\otimes 1}_{j-1\ \mathrm{times}}\otimes\tau)\Gamma_{[1;j]}^q \\
	& =  (\underbrace{1\otimes\dots\otimes 1}_{j\ \mathrm{times}}\otimes\tau)(\underbrace{1\otimes\dots\otimes 1}_{j-1\ \mathrm{times}}\otimes\tau)\Gamma_{[1;j]}^q \\
	& = (\underbrace{1\otimes\dots\otimes 1}_{j\ \mathrm{times}}\otimes\tau)\Gamma_{[1;j-1]\cup\{j+1\}}^q,
	\end{align*}
	where in the second line we have used (\ref{comodule prop 1}). Take now \(k > j\) and suppose the claim has been proven for \(k-1\), then we find
	\begin{align*}
	\Gamma_{[1;j-1]\cup\{k+2\}}^q & = (\underbrace{1\otimes\dots\otimes 1}_{k-1\ \mathrm{times}}\otimes\Delta\otimes 1)\Gamma_{[1;j-1]\cup\{k+1\}}^q \\
	& =  (\underbrace{1\otimes\dots\otimes 1}_{k-1\ \mathrm{times}}\otimes\Delta\otimes 1)(\underbrace{1\otimes\dots\otimes 1}_{k-1\ \mathrm{times}}\otimes\tau)\Gamma_{[1;j-1]\cup\{k\}}^q \\
	& = (\underbrace{1\otimes\dots\otimes 1}_{k\ \mathrm{times}}\otimes\tau)(\underbrace{1\otimes\dots\otimes 1}_{k-1\ \mathrm{times}}\otimes\tau)\Gamma_{[1;j-1]\cup\{k\}}^q \\
	& = (\underbrace{1\otimes\dots\otimes 1}_{k\ \mathrm{times}}\otimes\tau)\Gamma_{[1;j-1]\cup\{k+1\}}^q,
	\end{align*}
	where we have used the induction hypothesis on the second line and (\ref{comodule prop 1}) on the third line. This concludes the proof.
\end{proof}

Now that we have defined all operators \(\Gamma_A^q\), we are ready to state the definition of the higher rank \(q\)-deformed Bannai-Ito algebra.

\begin{definition}
	For any \(n\geq 3\), we denote by \(\mathcal{A}_n^q\) the subalgebra of \(\ospq^{\otimes n}\) with generators \(\Gamma_A^q\) with \(A\subseteq [n]\).
\end{definition}

This algebra can be identified as a higher rank generalization of the \(q\)-Bannai-Ito algebra \(\mathcal{A}_3^q\), as emerges from the following algebra relations. Let \(A\) and \(B\) be sets of integers between 1 and \(n\). We say that \(A\) \emph{matches} \(B\) if
\begin{equation}
\label{def: Matching sets}
\mathrm{max}(A\setminus(A\cap B)) < \mathrm{min}(A\cap B) \ \mathrm{and} \  \mathrm{max}(A\cap B) < \mathrm{min}(B\setminus(A\cap B)).
\end{equation}
An example of such matching sets is given by \(A = \{1,2,4,6\}, B = \{4,6,8\}\).

\begin{theorem}
	\label{prop: q-anticommutation Gamma}
	Let \(A, B \subseteq[n]\) be such that \(A\) matches \(B\) and \(A\) is a set of consecutive integers. Let \(C = (A\cup B) \setminus(A\cap B)\).
	Then the elements \(\Gamma_A^q\), \(\Gamma_B^q\) and \(\Gamma_C^q\) generate a rank 1 \(q\)-Bannai-Ito algebra:
	\begin{align}
	\label{eq: q-anticommutation Gamma}
	\begin{split}
	\{\Gamma_A^q,\Gamma_B^q\}_q  & = \Gamma_{C}^q + (q^{1/2}+q^{-1/2})\left(\Gamma_{A\cap B}^q\Gamma_{A\cup B}^q + \Gamma_{A\setminus( A\cap B)}^q\Gamma_{B\setminus (A\cap B)}^q\right), \\
	\{\Gamma_B^q,\Gamma_C^q\}_q  & = \Gamma_{A}^q + (q^{1/2}+q^{-1/2})\left(\Gamma_{B\cap C}^q\Gamma_{B\cup C}^q + \Gamma_{B\setminus (B\cap C)}^q\Gamma_{C\setminus (B\cap C)}^q\right), \\
	\{\Gamma_C^q,\Gamma_A^q\}_q  & = \Gamma_{B}^q + (q^{1/2}+q^{-1/2})\left(\Gamma_{C\cap A}^q\Gamma_{C\cup A}^q + \Gamma_{C\setminus (C\cap A)}^q\Gamma_{A\setminus (C\cap A)}^q\right).
	\end{split}
	\end{align}
\end{theorem}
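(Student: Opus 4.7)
The plan is to reduce the general relations (\ref{eq: q-anticommutation Gamma}) to the already-established rank $1$ relations (\ref{eq: q-Bannai-Ito rank 1}) by constructing an algebra morphism $\Phi:\ospq^{\otimes 3}\to\ospq^{\otimes n}$ that sends the seven rank $1$ Casimirs $\Gamma_{\{i\}}^q,\Gamma_{\{i,j\}}^q,\Gamma_{\{1,2,3\}}^q$ onto the seven intermediate Casimirs $\Gamma_{A_1}^q,\Gamma_{A_2}^q,\Gamma_{B_1}^q,\Gamma_A^q,\Gamma_B^q,\Gamma_C^q,\Gamma_{A\cup B}^q$ appearing on the right-hand side of (\ref{eq: q-anticommutation Gamma}). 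Once $\Phi$ is available, applying it termwise to (\ref{eq: q-Bannai-Ito rank 1}) yields (\ref{eq: q-anticommutation Gamma}) immediately, because algebra morphisms preserve $q$-anticommutators and products.

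First I would set up the blocks $A_1=A\setminus(A\cap B)$, $A_2=A\cap B$ and $B_1=B\setminus(A\cap B)$. The matching hypothesis (\ref{def: Matching sets}) forces the strict ordering $\max A_1<\min A_2\leq\max A_2<\min B_1$, and since $A=A_1\cup A_2$ is consecutive by assumption, both $A_1$ and $A_2$ are consecutive blocks (respectively an initial and a final segment of $A$). The block $B_1$, however, may well have holes, and there may also be a gap of non-$A\cup B$ indices between $\max A_2$ and $\min B_1$; accommodating this gap is exactly what the extension procedure (\ref{def: extension procedure part 1})--(\ref{def: extension procedure part 2}) was designed for.

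Next I would build $\Phi$ as a composition of three kinds of algebra morphisms: tensoring with identities, applying $\Delta$ on a chosen slot, and applying the coaction $\tau$ on a chosen slot. Concretely: pad on the left with $\min A-1$ copies of $1$ and on the right with $n-\max B_1$ copies of $1$; fatten the first slot by iterated $\Delta$'s so that it occupies the positions $A_1$; fatten the second slot analogously into $A_2$; and fatten the third slot into the positions $[\max A_2+1,\max B_1]$ via the extension algorithm (\ref{def: extension procedure part 1})--(\ref{def: extension procedure part 2}), creating holes with $\tau$, enlarging them with $\Delta\otimes 1$, and closing them with the identity rule as the indices of $B_1$ are reached. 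Each ingredient is an algebra morphism ($\Delta$ by the Hopf structure, $\tau$ by Proposition \ref{lemma: prop tau}, padding tautologically), so $\Phi$ is an algebra morphism.

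The heart of the proof is then the identification of $\Phi$-images with the intended Casimir operators. The images $\Phi(\Gamma_{\{1\}}^q)=\Gamma_{A_1}^q$, $\Phi(\Gamma_{\{2\}}^q)=\Gamma_{A_2}^q$, $\Phi(\Gamma_{\{3\}}^q)=\Gamma_{B_1}^q$ are essentially tautological from the construction of $\Phi$. The equalities $\Phi(\Gamma_{\{1,2\}}^q)=\Gamma_A^q$, $\Phi(\Gamma_{\{2,3\}}^q)=\Gamma_B^q$ and $\Phi(\Gamma_{\{1,2,3\}}^q)=\Gamma_{A\cup B}^q$ follow from coassociativity (\ref{Coassociativity}), which lets us commute the order in which $\Delta$'s are applied on different slots. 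The delicate identity is $\Phi(\Gamma_{\{1,3\}}^q)=\Gamma_C^q$, because here one must slide the tower of $\tau$'s that fattens slot $3$ past the $\Delta$'s that fattened slot $1$; this is exactly where the coaction axiom (\ref{comodule prop 1}) and its iterated form, codified by Lemma \ref{lemma: alternative expression one hole}, are invoked. I anticipate this bookkeeping — verifying that the prescribed composition of $\Delta$'s and $\tau$'s inside $\Phi$ reproduces precisely the extension recipe (\ref{def: extension procedure part 1})--(\ref{def: extension procedure part 2}) associated to the set $C=A_1\cup B_1$ — to be the main technical obstacle. Once it is in place, applying $\Phi$ to the three cyclic relations (\ref{eq: q-Bannai-Ito rank 1}) in $\ospq^{\otimes 3}$ produces the three relations (\ref{eq: q-anticommutation Gamma}), completing the proof.
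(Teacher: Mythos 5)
Your proposal is correct and is essentially the paper's own proof: the paper likewise exhibits one common sequence of extension morphisms (your $\Phi$), checks via coassociativity that it sends the seven rank~1 Casimirs to $\Gamma_{A_1}^q,\Gamma_{A_2}^q,\Gamma_{B_1}^q,\Gamma_A^q,\Gamma_B^q,\Gamma_C^q,\Gamma_{A\cup B}^q$, and then pulls the sequence outside the $q$-anticommutator to reduce to (\ref{eq: q-Bannai-Ito rank 1}), with the delicate identification $\Phi(\Gamma_{\{1,3\}}^q)=\Gamma_C^q$ handled exactly as you predict via the coaction axiom (\ref{comodule prop 1}) and Lemma \ref{lemma: alternative expression one hole}. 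The only organizational difference is that the bookkeeping you defer is carried out in the paper in two stages — first the case $B=[j;k+1]$, then general $\widetilde{B}$ attached by a further chain of morphisms $\alpha_i$ — but it is the same argument.
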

\begin{proof}
	By the imposed requirements, the sets \(A\) and \(B\) are of the following form:
	\[
	A = [i;k], \quad B = [j;k]\cup \widetilde{B},
	\]	
	with \(i<j\leq k\) and all elements of \(\widetilde{B}\) strictly larger than \(k\). By (\ref{def: extension procedure part 1}) we may write \(i = 1\) without loss of generality.
	
	We will first consider the situation where \(j<k\) and \(\widetilde{B} = \{k+1\}\):
	\[
	A = [1;k], \quad B = [j;k+1].
	\]
	We apply the extension procedure (\ref{def: extension procedure part 1})--(\ref{def: extension procedure part 2}) to obtain the element \(\Gamma_A^q\):
	\begin{align*}
	\Gamma_A^q & = \left[(\underbrace{1\otimes\dots\otimes1}_{k-2 \ \mathrm{times}}\otimes\Delta)\dots(1\otimes\Delta)\Delta(\Gamma^q)\right]\otimes 1 \\
	& = (\underbrace{1\otimes\dots\otimes1}_{k-2 \ \mathrm{times}}\otimes\Delta\otimes 1)\dots(1\otimes\Delta\otimes 1)\left(\Delta(\Gamma^q)\otimes 1\right).
	\end{align*}
	We will now use the coassociativity (\ref{Coassociativity}) to rearrange the extension morphisms:
	\begin{equation}
	\label{eq: expression Gamma A final}
	\Gamma_A^q = (\Delta\otimes\underbrace{1\otimes\dots\otimes1}_{k-1 \ \mathrm{times}})\dots(\Delta\otimes\underbrace{1\otimes\dots\otimes1}_{k-j+2 \ \mathrm{times}})(\underbrace{1\otimes\dots\otimes1}_{k-j \ \mathrm{times}}\otimes\Delta\otimes 1)\dots(1\otimes\Delta\otimes 1)\left(\Gamma_{\{1,2\}}^q\right),
	\end{equation}
	where we have replaced \(\Delta(\Gamma^q)\otimes 1\) by the rank 1 operator \(\Gamma_{\{1,2\}}^q\) in the threefold tensor product.
	The motivation for this rearrangement becomes clear upon considering the expression for \(\Gamma_B^q\):
	\begin{align*}
	\Gamma_B^q & = \underbrace{1\otimes\dots\otimes 1}_{j-1 \ \mathrm{times}}\otimes \left[(\underbrace{1\otimes\dots\otimes1}_{k-j \ \mathrm{times}}\otimes\Delta)\dots(1\otimes\Delta)\Delta(\Gamma^q)\right] \\
	& = \underbrace{1\otimes\dots\otimes 1}_{j-1 \ \mathrm{times}}\otimes \left[(\underbrace{1\otimes\dots\otimes1}_{k-j-1 \ \mathrm{times}}\otimes\Delta\otimes 1)\dots(\Delta\otimes 1)\Delta(\Gamma^q)\right] \\
	& = \underbrace{1\otimes\dots\otimes 1}_{j-2 \ \mathrm{times}}\otimes \left[(\underbrace{1\otimes\dots\otimes1}_{k-j \ \mathrm{times}}\otimes\Delta\otimes 1)\dots(1\otimes\Delta\otimes 1)(1\otimes\Delta(\Gamma^q))\right].
	\end{align*}
	In the second line we have used the coassociativity to shift all morphisms \(\Delta\) over one position, in the last line we have taken a factor 1 inside the square brackets. Since \(\Delta(1) = 1 \otimes 1\) and since in the expression between square brackets, the term on the first position in the tensor product is 1, we may also produce the term \(\underbrace{1 \otimes\dots\otimes 1}_{j-2\ \mathrm{times}}\) by repeatedly applying \(\Delta\) to this expression:
	\begin{equation}
	\label{eq: expression Gamma B final}
	\Gamma_B^q = (\Delta\otimes\underbrace{1\otimes\dots\otimes1}_{k-1 \ \mathrm{times}})\dots(\Delta\otimes\underbrace{1\otimes\dots\otimes1}_{k-j+2 \ \mathrm{times}})(\underbrace{1\otimes\dots\otimes1}_{k-j \ \mathrm{times}}\otimes\Delta\otimes 1)\dots(1\otimes\Delta\otimes 1)\left(\Gamma_{\{2,3\}}^q\right).
	\end{equation}
	Note that we have replaced \(1\otimes\Delta(\Gamma^q)\) by the rank 1 operator \(\Gamma_{\{2,3\}}^q\).
	Observe that the sequences of applied extension morphisms in (\ref{eq: expression Gamma A final}) and (\ref{eq: expression Gamma B final}) are identical. The same sequence also arises when constructing \(\Gamma_C^q\), with \(C = (A\cup B)\setminus(A\cap B) = [1;j-1]\cup\{k+1\}\). Indeed, applying the extension procedure (\ref{def: extension procedure part 1})--(\ref{def: extension procedure part 2}) and using coassociativity, we find:
	\begin{align*}
	\Gamma_{[1;j-1] \cup \{k+1\}}^q = & \, (\underbrace{1\otimes\dots\otimes 1}_{k-2\ \mathrm{times}}\otimes\Delta\otimes 1)\dots(\underbrace{1\otimes\dots\otimes 1}_{j-1\ \mathrm{times}}\otimes\Delta\otimes 1)(\underbrace{1\otimes\dots\otimes 1}_{j-1 \ \mathrm{times}}\otimes\tau) \\ & \, (\Delta\otimes\underbrace{1\otimes\dots\otimes 1}_{j-2\ \mathrm{times}})\dots(\Delta\otimes 1)\Delta(\Gamma^q).
	\end{align*}
	The elements on the second position in the tensor product expression for \(\Delta(\Gamma^q)\) are left unaltered by the \(j-2\) morphisms of the form \(\Delta\otimes1\otimes\dots\otimes 1\), the coaction \(\tau\) is the first to act on these elements. We may thus rearrange the order and write first the morphisms acting on this second position. The same idea led us to the expression (\ref{eq: Gamma 124 alternative expression}) for \(\Gamma_{\{1,2,4\}}^q\). We obtain the following:
	\begin{align}
	\label{eq: Gamma 13 extended}
	\begin{split}
	\Gamma_{[1;j-1] \cup \{k+1\}}^q = & \, (\Delta\otimes\underbrace{1\otimes\dots\otimes1}_{k-1 \ \mathrm{times}})\dots(\Delta\otimes\underbrace{1\otimes\dots\otimes1}_{k-j+2 \ \mathrm{times}})\\ &\,(\underbrace{1\otimes\dots\otimes1}_{k-j \ \mathrm{times}}\otimes\Delta\otimes 1)\dots(1\otimes\Delta\otimes 1)(1\otimes\tau)\Delta(\Gamma^q) \\ = & \, (\Delta\otimes\underbrace{1\otimes\dots\otimes1}_{k-1 \ \mathrm{times}})\dots(\Delta\otimes\underbrace{1\otimes\dots\otimes1}_{k-j+2 \ \mathrm{times}})\\ &\,(\underbrace{1\otimes\dots\otimes1}_{k-j \ \mathrm{times}}\otimes\Delta\otimes 1)\dots(1\otimes\Delta\otimes 1)\left(\Gamma_{\{1,3\}}^q\right),
	\end{split}
	\end{align}
	where as before we have changed the notation \((1\otimes\tau)\Delta(\Gamma^q)\) to \(\Gamma_{\{1,3\}}^q\) in the last line. In (\ref{eq: Gamma 13 extended}) we now recognize the same sequence of extension morphisms as applied in (\ref{eq: expression Gamma A final}) and (\ref{eq: expression Gamma B final}). Applying this sequence now to the rank 1 operators \(\Gamma_{\{i\}}^q\) and \(\Gamma_{\{1,2,3\}}^q\), we find:
	\begin{align}
	\label{eq: Four identities}
	\begin{split}
	(\Delta\otimes\underbrace{1\otimes\dots\otimes1}_{k-1 \ \mathrm{times}})\dots(1\otimes\Delta\otimes 1)\Gamma_{\{1\}}^q & = \Gamma_{[1;j-1]}^q, \\
	(\Delta\otimes\underbrace{1\otimes\dots\otimes1}_{k-1 \ \mathrm{times}})\dots(1\otimes\Delta\otimes 1)\Gamma_{\{2\}}^q & = \Gamma_{[j;k]}^q, \\
	(\Delta\otimes\underbrace{1\otimes\dots\otimes1}_{k-1 \ \mathrm{times}})\dots(1\otimes\Delta\otimes 1)\Gamma_{\{3\}}^q & = \Gamma_{\{k+1\}}^q, \\
	(\Delta\otimes\underbrace{1\otimes\dots\otimes1}_{k-1 \ \mathrm{times}})\dots(1\otimes\Delta\otimes 1)\Gamma_{\{1,2,3\}}^q & = \Gamma_{[1;k+1]}^q.
	\end{split}
	\end{align}
	This follows from coassociativity and from the fact that \(\Delta(1) = 1\otimes 1\). Using the linearity and multiplicativity of the extension morphisms, we can bring the sequence outside the \(q\)-anticommutator:
	\[
	\{\Gamma_A^q,\Gamma_B^q\}_q = (\Delta\otimes\underbrace{1\otimes\dots\otimes1}_{k-1 \ \mathrm{times}})\dots(1\otimes\Delta\otimes 1)\{\Gamma_{\{1,2\}}^q,\Gamma_{\{2,3\}}^q\}_q.
	\]
	The rank 1 \(q\)-Bannai-Ito relations assert that
	\[
	\{\Gamma_{\{1,2\}}^q, \Gamma_{\{2,3\}}^q\}_q = \Gamma_{\{1,3\}}^q + \left(q^{1/2}+q^{-1/2}\right)\left(\Gamma_{\{2\}}^q\Gamma_{\{1,2,3\}}^q + \Gamma_{\{1\}}^q\Gamma_{\{3\}}^q\right),
	\]
	which combined with (\ref{eq: Gamma 13 extended}) and (\ref{eq: Four identities}) leads to the anticipated relation:
	\begin{equation}
	\label{eq: q-anticommutation special subcase}
	\{\Gamma_{[1;k]}^q,\Gamma_{[j;k+1]}^q\}_q = \Gamma_{[1;j-1]\cup\{k+1\}}^q + \left(q^{1/2}+q^{-1/2}\right)\left(\Gamma_{[j;k]}^q\Gamma_{[1;k+1]}^q + \Gamma_{[1;j-1]}^q\Gamma_{\{k+1\}}^q \right). 
	\end{equation}
	The relations for \(\{\Gamma_B^q,\Gamma_C^q\}_q\) and \(\{\Gamma_C^q,\Gamma_A^q\}_q\) now follow similarly from the other rank 1 \(q\)-Bannai-Ito relations
	\begin{align*}
	\{\Gamma_{\{2,3\}}^q, \Gamma_{\{1,3\}}^q\}_q = \Gamma_{\{1,2\}}^q + \left(q^{1/2}+q^{-1/2}\right)\left(\Gamma_{\{3\}}^q\Gamma_{\{1,2,3\}}^q + \Gamma_{\{1\}}^q\Gamma_{\{2\}}^q\right), \\
	\{\Gamma_{\{1,3\}}^q, \Gamma_{\{1,2\}}^q\}_q = \Gamma_{\{2,3\}}^q + \left(q^{1/2}+q^{-1/2}\right)\left(\Gamma_{\{1\}}^q\Gamma_{\{1,2,3\}}^q + \Gamma_{\{2\}}^q\Gamma_{\{3\}}^q\right).
	\end{align*}
	
	The case \(\widetilde{B} = \{k+1\}\) and \(j = k\) follows analogously using the simpler sequence
	\[
	(\Delta\otimes\underbrace{1\otimes\dots\otimes1}_{k-1 \ \mathrm{times}})\dots(\Delta\otimes1\otimes 1).
	\]
	
	Let us now return to the general case:
	\[
	A = [1;k], \quad B = [j;k]\cup \widetilde{B},
	\]
	where \(\widetilde{B}\) is a nonempty set whose elements are all strictly larger than \(k\). The expression for \(\Gamma_B^q\) can be obtained from the one for the \(k\)-fold tensor product operator \(\Gamma_{[j;k]}^q\):
	\[
	\Gamma_B^q = (\underbrace{1\otimes\dots\otimes 1}_{\mathrm{max}(B)-3 \ \mathrm{times}}\otimes\,\alpha_{\mathrm{max}(B)-2})\dots(\underbrace{1\otimes\dots\otimes 1}_{k-2 \ \mathrm{times}}\otimes\,\alpha_{k-1})\Gamma_{[j;k]}^q,
	\]
	where each \(\alpha_i\) is either \(1\otimes\Delta\), \(1\otimes\tau\) or \(\Delta\otimes 1\). From the extension procedure (\ref{def: extension procedure part 1})--(\ref{def: extension procedure part 2}) it is clear that \(\alpha_{k-1} = 1\otimes\Delta\), irrespective of whether \(k+1\) is contained in \(B\) or not. As \((\underbrace{1\otimes\dots\otimes 1}_{k-1 \ \mathrm{times}}\otimes\,\Delta)\Gamma_{[j;k]}^q = \Gamma_{[j;k+1]}^q\), we may write:
	\[
	\Gamma_B^q = (\underbrace{1\otimes\dots\otimes 1}_{\mathrm{max}(B)-3 \ \mathrm{times}}\otimes\,\alpha_{\mathrm{max}(B)-2})\dots(\underbrace{1\otimes\dots\otimes 1}_{k-1\ \mathrm{times}}\otimes\,\alpha_{k})\Gamma_{[j;k+1]}^q.
	\]
	Consider now the rank \(k-1\) operator \(\Gamma_{[1;k]}^q\), this is an element in the \((k+1)\)-fold tensor product with a 1 at the last position. \(\Gamma_A^q\) can be obtained from this operator upon repeatedly adding \(\otimes 1\) at the back. As \(\Delta(1) = \tau(1) = 1\otimes1\) and as \(\alpha_k\) cannot be \(\Delta\otimes 1\) by (\ref{def: extension procedure part 2}), this is equivalent to writing:
	\[
	\Gamma_A^q = (\underbrace{1\otimes\dots\otimes 1}_{\mathrm{max}(B)-3 \ \mathrm{times}}\otimes\,\alpha_{\mathrm{max}(B)-2})\dots(\underbrace{1\otimes\dots\otimes 1}_{k-1\ \mathrm{times}}\otimes\,\alpha_{k})\Gamma_{[1;k]}^q.
	\]
	Proceeding as before, we bring the sequence of extension morphisms outside the \(q\)-anticommutator:
	\[
	\{\Gamma_A^q,\Gamma_B^q\}_q = (1\otimes\dots\otimes1\otimes\,\alpha_{\mathrm{max}(B)-2})\dots(1\otimes\dots\otimes1\otimes\,\alpha_{k})\{\Gamma_{[1;k]}^q,\Gamma_{[j;k+1]}^q\}_q.
	\]
	All occurring operators can be built using this sequence:
	\begin{align*}
	(1\otimes\dots\otimes1\otimes\,\alpha_{\mathrm{max}(B)-2})\dots(1\otimes\dots\otimes1\otimes\,\alpha_{k})\Gamma_{[1;j-1]\cup\{k+1\}}^q & = \Gamma_{[1;j-1]\cup\widetilde{B}}^q, \\
	(1\otimes\dots\otimes1\otimes\,\alpha_{\mathrm{max}(B)-2})\dots(1\otimes\dots\otimes1\otimes\,\alpha_{k})\Gamma_{[1;k+1]}^q & = \Gamma_{[1;k]\cup\widetilde{B}}^q,
	\end{align*}
	and so on. For \(\alpha_k = 1\otimes\Delta\) these equalities follow immediately from the extension procedure (\ref{def: extension procedure part 1})--(\ref{def: extension procedure part 2}). In case \(\alpha_k=1\otimes\tau\) the equalities follow from Lemma \ref{lemma: alternative expression one hole}.
	Combined with (\ref{eq: q-anticommutation special subcase}), this leads indeed to
	\[
	\{\Gamma_A^q,\Gamma_B^q\}_q = \Gamma_{[1;j-1]\cup\widetilde{B}}^q + \left(q^{1/2}+q^{-1/2}\right)\left(\Gamma_{[j;k]}^q\Gamma_{[1;k]\cup\widetilde{B}}^q + \Gamma_{[1;j-1]}^q\Gamma_{\widetilde{B}}^q\right).
	\]
	The proof for \(\{\Gamma_B^q,\Gamma_C^q\}_q\) and \(\{\Gamma_C^q,\Gamma_A^q\}_q\) follows along the same lines.
\end{proof}

\begin{remark}
	The condition that \(A\) be a set of consecutive integers can in fact be omitted in the statement of Theorem \ref{prop: q-anticommutation Gamma}. Moreover, our definition of matching sets is more restrictive than necessary for the relations (\ref{eq: q-anticommutation Gamma}) to hold. The proof however becomes more complicated in these more general cases, and requires an additional definition. We plan to report on these minimal conditions in the near future \cite{DeClercq-2019}.
\end{remark}

Upon taking the limit \(q\rightarrow 1\), the relations (\ref{eq: q-anticommutation Gamma}) reduce to identities in the rank \(n-2\) Bannai-Ito algebra, introduced in \cite{DBAdv}. The claim that \(\mathcal{A}_n^q\) also has rank \(n-2\) is confirmed by the following proposition.

\begin{proposition}
	\label{prop: A contained in B}
	For \(A,B\subseteq [n]\) sets of consecutive integers such that \(A\subseteq B\), one has
	\[
	[\Gamma_A^q,\Gamma_B^q] = 0.
	\]
\end{proposition}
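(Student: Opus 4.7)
The plan is to reduce the statement to the centrality of $\Gamma^q$ in $\ospq$, using coassociativity to re-group the iterated coproducts that define $\Gamma_B^q$ and $\Gamma_A^q$.

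Since $A$ and $B$ are both intervals, the extension procedure (\ref{def: extension procedure part 1})--(\ref{def: extension procedure part 2}) never invokes the coaction $\tau$: both $\Gamma_A^q$ and $\Gamma_B^q$ are produced from $\Gamma^q$ by iterated applications of $\Delta$ only. Write $A = [a;b]$ and $B = [c;d]$ with $c \leq a \leq b \leq d$, and set $L = [c;a-1]$, $R = [b+1;d]$, either of which may be empty.

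The key step is to invoke coassociativity (\ref{Coassociativity}) repeatedly to factor the iterated coproduct defining $\Gamma_B^q$ as a two-stage composition: first apply an iterated coproduct $\Phi : \ospq \to \ospq^{\otimes 3}$ that partitions its image into three blocks of sizes $|L|$, $|A|$, $|R|$, and then further expand each block by iterated coproducts $\Delta_L$, $\Delta_A$, $\Delta_R$ of the appropriate lengths. Letting $\Psi$ denote the resulting map $X_L \otimes X_A \otimes X_R \mapsto 1^{\otimes(c-1)} \otimes \Delta_L(X_L) \otimes \Delta_A(X_A) \otimes \Delta_R(X_R) \otimes 1^{\otimes(n-d)}$ from $\ospq^{\otimes 3}$ to $\ospq^{\otimes n}$, one obtains
\[
\Gamma_B^q = \Psi\bigl(\Phi(\Gamma^q)\bigr).
\]
The same shuffling, combined with $\Delta_L(1) = 1^{\otimes|L|}$ and $\Delta_R(1) = 1^{\otimes|R|}$, yields
\[
\Gamma_A^q = \Psi\bigl(1 \otimes \Gamma^q \otimes 1\bigr).
\]
This rewriting is the same kind of maneuver employed in the proof of Theorem \ref{prop: q-anticommutation Gamma}, for instance around (\ref{eq: expression Gamma A final}) and (\ref{eq: Gamma 13 extended}).

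The conclusion is then immediate: centrality of $\Gamma^q$ in $\ospq$ means that $1 \otimes \Gamma^q \otimes 1$ is central in $\ospq^{\otimes 3}$, so in particular it commutes with $\Phi(\Gamma^q)$. Since $\Psi$ is an algebra morphism, this commutation is transported to $[\Gamma_A^q, \Gamma_B^q] = 0$. The only point requiring genuine care is the coassociativity bookkeeping that produces the factorization $\Psi \circ \Phi$; the edge cases $L = \emptyset$ or $R = \emptyset$ collapse the three-block decomposition to two blocks and are handled identically, while $A = B$ is trivial.
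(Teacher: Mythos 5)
Your proof is correct and follows essentially the same route as the paper's: both rewrite $\Gamma_A^q$ and $\Gamma_B^q$ as images of $1\otimes\Gamma^q\otimes 1$ and $(1\otimes\Delta)\Delta(\Gamma^q)$ under a common sequence of coproduct extension morphisms (your $\Psi$), obtained via coassociativity, and then conclude from the centrality of $\Gamma^q$ together with the fact that this sequence is an algebra morphism. Your explicit three-block $L$, $A$, $R$ bookkeeping is just a cleaner packaging of the paper's displayed chain of morphisms.
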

\begin{proof}
	The requirements on the sets \(A\) and \(B\) can be translated to
	\[
	A = [i;j], B = [1;k],
	\]
	with \(1\leq i \leq j \leq k\), and where by (\ref{def: extension procedure part 1}) we have \(\mathrm{min}(B) = 1\) without loss of generality. We proceed as in the proof of Theorem \ref{prop: q-anticommutation Gamma}: we rewrite \(\Gamma_A^q\) and \(\Gamma_B^q\) using a common sequence of extension morphisms and then bring this sequence outside the commutator. In case all inequalities are strict, i.e. \(i<j<k\), the sequence under consideration will be
	\begin{align*}
	&(\underbrace{1\otimes\dots\otimes1}_{k-2 \ \mathrm{times}}\otimes\Delta)\dots(\underbrace{1\otimes\dots\otimes1}_{j \ \mathrm{times}}\otimes\Delta)(\underbrace{1\otimes\dots\otimes1}_{j-2 \ \mathrm{times}}\otimes\Delta\otimes 1)\dots(\underbrace{1\otimes\dots\otimes1}_{i-1 \ \mathrm{times}}\otimes\Delta\otimes 1)\\&(\Delta\otimes\underbrace{1\otimes\dots\otimes 1}_{i-1 \ \mathrm{times}})\dots(\Delta\otimes 1\otimes 1),
	\end{align*}
	this gives \(\Gamma_{[i;j]}^q\) when applied to \(1\otimes\Gamma^q\otimes 1\) and \(\Gamma_{[1;k]}^q\) when applied to \((1\otimes\Delta)\Delta(\Gamma^q)\). The result then follows from the fact that \([1\otimes\Gamma^q\otimes 1, (1\otimes\Delta)\Delta(\Gamma^q)] = 0\). The other cases use similar sequences.
\end{proof}

Consider now chains
\[
A_1\subset A_2\subset \dots\subset A_k
\]
of subsets of \([n]\), ordered by inclusion, such that the operators \(\Gamma_{A_i}^q\) are all mutually commutative and each set has \(1 < \vert A_i\vert < n\). The length of the longest such chain can be taken to be the rank of the algebra \(\mathcal{A}_n^q\).

An example of such a chain is provided by the sets \([2],[3],\dots,[n-1]\) and the corresponding operators
\[
\Gamma_{[2]}^q,\Gamma_{[3]}^q,\dots,\Gamma_{[n-1]}^q.
\]
The subalgebra generated by these elements is abelian by the previous proposition and clearly no elements can be added to the chain without losing the properties that all \(\Gamma_{A_i}^q\) commute and that \(1<\vert A_i\vert < n\). We may conclude that \(\mathcal{A}_n^q\) is indeed of rank \(n-2\).

\begin{remark}
	\label{remark: A B commuting}
	More cases can be identified where \(\Gamma_A^q\) and \(\Gamma_B^q\) will commute. A trivial example is the case \(\mathrm{max}(A) < \mathrm{min}(B)\). Here in the expression for \(\Gamma_A^q\) all the positions in the tensor product corresponding to elements of the set \(B\) will be \(1\) and vice versa, as follows from (\ref{def: extension procedure part 1}). 
\end{remark}

As an immediate consequence of Theorem \ref{prop: q-anticommutation Gamma} we see that the operators of the form \(\Gamma_A^q\), with \(A\) a set of consecutive integers, are sufficient to generate the entire algebra.

\begin{corollary}
	\label{cor: Generating set}
	The set of operators \(\Gamma_{[i;j]}^q\) with \(i\leq j\leq n\) is a generating set for \(\mathcal{A}_n^q\).
\end{corollary}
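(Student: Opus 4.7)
The plan is to prove this by induction on the number $s$ of maximal intervals in the decomposition of a subset $A \subseteq [n]$: write $A = I_1 \cup I_2 \cup \cdots \cup I_s$ with $I_r = [a_r;b_r]$ and $b_r + 1 < a_{r+1}$. When $s \leq 1$, $\Gamma_A^q$ is already of the required form $\Gamma_{[i;j]}^q$ (and $\Gamma_\emptyset^q = -\left[\frac{1}{2}\right]_q$ is a central scalar). For the inductive step I would assume the claim for all subsets with fewer than $s$ maximal intervals, and let $A$ have $s \geq 2$ pieces.

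The idea is to choose sets $A'$ and $B'$ so that Theorem \ref{prop: q-anticommutation Gamma} applies and the operator $\Gamma_A^q$ appears precisely as the element $\Gamma_C^q$ in the first relation of (\ref{eq: q-anticommutation Gamma}). The natural choice is
\[
A' = [a_1;a_2-1], \qquad B' = [b_1+1;b_2] \cup I_3 \cup \cdots \cup I_s.
\]
Then $A'$ is consecutive and a direct computation yields
\[
A' \cap B' = [b_1+1;a_2-1], \quad A' \setminus B' = I_1, \quad B' \setminus A' = I_2 \cup I_3 \cup \cdots \cup I_s,
\]
from which the matching condition (\ref{def: Matching sets}) and the identity $(A' \cup B') \setminus (A' \cap B') = A$ follow at once.

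Solving the first relation of (\ref{eq: q-anticommutation Gamma}) for $\Gamma_A^q$ then gives
\[
\Gamma_A^q = \{\Gamma_{A'}^q,\Gamma_{B'}^q\}_q - (q^{1/2}+q^{-1/2})\left(\Gamma_{[b_1+1;a_2-1]}^q \Gamma_{A' \cup B'}^q + \Gamma_{I_1}^q \Gamma_{B' \setminus A'}^q\right).
\]
Of the six subsets indexing the Casimirs on the right, the three sets $A'$, $A' \cap B'$ and $I_1$ are already intervals; the remaining three, namely $B'$, $A' \cup B' = [a_1;b_2] \cup I_3 \cup \cdots \cup I_s$, and $B' \setminus A' = I_2 \cup \cdots \cup I_s$, each carry at most $s - 1$ maximal intervals, so by the inductive hypothesis they are polynomials in the interval generators $\Gamma_{[i;j]}^q$. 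This closes the induction.

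I expect the only substantive work to be the set-theoretic bookkeeping: verifying the matching condition, the identity $(A' \cup B') \setminus (A' \cap B') = A$, and the interval counts of $B'$, $A' \cup B'$ and $B' \setminus A'$, with an eye on the degenerate cases $s = 2$ and one-element gaps $a_2 = b_1 + 2$ (where $A' \cap B'$ collapses to a singleton but the matching condition still holds). Once these checks are in place, the corollary reduces to a single invocation of Theorem \ref{prop: q-anticommutation Gamma} per induction step.
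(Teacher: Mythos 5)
Your proof is correct and follows essentially the same route as the paper: the paper also decomposes $A$ into maximal intervals, picks $B=[i_1;i_2-1]$ and $C=[j_1+1;j_2]\cup[i_3;j_3]\cup\dots\cup[i_m;j_m]$ (your $A'$ and $B'$), verifies the matching condition, and solves the relation of Theorem \ref{prop: q-anticommutation Gamma} for $\Gamma_A^q$ to induct on the number of intervals. Your extra attention to the degenerate cases ($s=2$, one-element gaps) is a minor refinement the paper leaves implicit.
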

\begin{proof}
	Let \(A\) be an arbitrary subset of \([n]\). Then writing the elements in consecutive order, we obtain an expression for \(A\) as a disjoint union of discrete intervals:
	\[
	A = [i_1;j_1] \cup [i_2;j_2] \cup [i_3;j_3] \cup\dots\cup [i_m;j_m],
	\] 
	with \(j_k+1<i_{k+1}\). We will prove the claim by induction on \(m\). If \(m = 1\), then \(A\) is itself a set of consecutive integers, so there is nothing to prove. Suppose now that \(m>1\) and that the statement has been proven for \(m-1\). We define the sets \(B\) and \(C\) as follows:
	\[
	B = [i_1;i_2-1], \quad C = [j_1+1;j_2]\cup[i_3;j_3]\cup\dots\cup[i_m;j_m].
	\]
	Note that
	\begin{align*}
	B\cap C = [j_1+1;i_2-1], \quad & B\cup C = [i_1;j_2] \cup [i_3;j_3]\cup\dots\cup[i_m;j_m], \\ B\setminus(B\cap C) = [i_1;j_1], \quad & C\setminus(B\cap C) = [i_2;j_2]\cup\dots\cup[i_m;j_m].
	\end{align*}
	Observe that set \(B\) matches set \(C\). Applying Theorem \ref{prop: q-anticommutation Gamma}, we obtain:
	\[
	\Gamma_A^q = \{\Gamma_B^q,\Gamma_C^q\}_q - \left(q^{1/2}+q^{-1/2}\right)\left(\Gamma_{B\cap C}^q\Gamma_{B\cup C}^q + \Gamma_{B\setminus B\cap C}^q\Gamma_{C\setminus B\cap C}^q\right).
	\]
	Each of the sets occurring in the right-hand side has strictly fewer holes than \(A\). Applying the induction hypothesis, the right-hand side may be rewritten using solely operators of the form \(\Gamma_{[i;j]}^q\). This proves our claim.
\end{proof}

Theorem \ref{prop: q-anticommutation Gamma} in fact asserts the existence of several copies of the rank 1 \(q\)-Bannai-Ito algebra inside \(\mathcal{A}_n^q\). An example of such an algebra is the subalgebra generated by \(\Gamma_{[m]}^q\), \(\Gamma_{\{m,m+1\}}^q\) and \(\Gamma_{[1;m-1]\cup\{m+1\}}^q\), where \(m\) is any natural number between \(2\) and \(n-1\). Inside this algebra one can identify the element
\begin{align}
\begin{split}
\label{def: Casimir operator C}
C_m = & \, (q^{-1/2}-q^{3/2})\Gamma_{\{m,m+1\}}^q\Gamma_{[1;m-1]\cup\{m+1\}}^q\Gamma_{[m]}^q \\ & + q\left(\Gamma_{\{m,m+1\}}^q\right)^2+q^{-1}\left(\Gamma_{[1;m-1]\cup\{m+1\}}^q\right)^2+ q\left(\Gamma_{[m]}^q\right)^2 \\ & - (q^{-1/2}-q^{3/2})\left(\Gamma_{\{m\}}^q\Gamma_{\{m+1\}}^q+\Gamma_{[m-1]}^q\Gamma_{[m+1]}^q \right)\Gamma_{\{m,m+1\}}^q \\ & - (q^{-1/2}-q^{3/2})\left(\Gamma_{\{m\}}^q\Gamma_{[m-1]}^q+\Gamma_{\{m+1\}}^q\Gamma_{[m+1]}^q \right)\Gamma_{[m]}^q \\ & - (q^{1/2}-q^{-3/2})\left(\Gamma_{\{m+1\}}^q\Gamma_{[m-1]}^q+\Gamma_{\{m\}}^q\Gamma_{[m+1]}^q\right)\Gamma_{[1;m-1]\cup\{m+1\}}^q,
\end{split}
\end{align}
in analogy with equation (3.11) in \cite{Genest&Vinet&Zhedanov-2016}. The operator \(C_m\) turns out to play a special role.

\begin{lemma}
	\label{lemma: Casimir C_m}
	For \(m\in\{2,3,\dots,n-1\}\), the operator \(C_m\) is the Casimir element of the algebra generated by \(\Gamma_{[m]}^q\), \(\Gamma_{\{m,m+1\}}^q\) and \(\Gamma_{[1;m-1]\cup\{m+1\}}^q\). Moreover, it allows the following equivalent expression:
	\begin{align}
	\label{eq: Alternative expression Casimir}
	\begin{split}
	C_m = & \,\left(\Gamma_{[m-1]}^q\right)^2 + \left(\Gamma_{\{m\}}^q\right)^2 + \left(\Gamma_{\{m+1\}}^q\right)^2 + \left(\Gamma_{[m+1]}^q\right)^2 \\ & - (q-q^{-1})^2\Gamma_{\{m\}}^q\Gamma_{\{m+1\}}^q\Gamma_{[m-1]}^q\Gamma_{[m+1]}^q - \frac{q}{(1+q)^2}.
	\end{split}
	\end{align}
\end{lemma}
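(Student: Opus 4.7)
The plan is to exploit the fact that, by Theorem~\ref{prop: q-anticommutation Gamma} applied to the matching pair $A=[m]$, $B=\{m,m+1\}$ with $C=(A\cup B)\setminus(A\cap B)=[1;m-1]\cup\{m+1\}$, the three generators $X=\Gamma_{[m]}^q$, $Y=\Gamma_{\{m,m+1\}}^q$, $Z=\Gamma_{[1;m-1]\cup\{m+1\}}^q$ form a rank~1 $q$-Bannai-Ito algebra whose scalar structure coefficients are built from
$$\omega_1=\Gamma_{[m-1]}^q,\quad \omega_2=\Gamma_{\{m\}}^q,\quad \omega_3=\Gamma_{\{m+1\}}^q,\quad \omega_4=\Gamma_{[m+1]}^q.$$
Using Proposition~\ref{prop: A contained in B} together with Remark~\ref{remark: A B commuting}, each $\omega_i$ commutes pairwise with the others and with each of $X$ and $Y$; commutativity of every $\omega_i$ with $Z$ then follows automatically by solving the first relation in (\ref{eq: q-anticommutation Gamma}) for $Z$ in terms of $\{X,Y\}_q$ and the $\omega_i$. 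Hence the $\omega_i$ behave as central scalars inside the subalgebra under consideration.

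My strategy is to prove the alternative expression~(\ref{eq: Alternative expression Casimir}) first and deduce the Casimir property as a corollary. This reverses the order of the two claims but is strategically convenient: once (\ref{eq: Alternative expression Casimir}) is in hand, its right-hand side is manifestly a polynomial in the mutually commuting central elements $\omega_1,\ldots,\omega_4$, so $C_m$ automatically commutes with each of $X,Y,Z$ and is therefore central in the algebra they generate. Verifying centrality directly from (\ref{def: Casimir operator C}) would require computing three rather messy commutators; the present approach avoids that entirely.

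To establish the equivalence of (\ref{def: Casimir operator C}) and (\ref{eq: Alternative expression Casimir}), I would work purely inside the rank~1 $q$-Bannai-Ito algebra on $X,Y,Z$ with central parameters $\omega_1,\ldots,\omega_4$; this is the direct analogue of equation~(3.11) in~\cite{Genest&Vinet&Zhedanov-2016}. Squaring each of the three relations (\ref{eq: q-anticommutation Gamma}) and combining the resulting identities with the cubic term $(q^{-1/2}-q^{3/2})YZX$ and the mixed quadratic terms of (\ref{def: Casimir operator C}), I expect every genuinely non-scalar contribution to cancel, leaving only a polynomial in the $\omega_i$. A short check then identifies the residual constant as $-q/(1+q)^2=-[1/2]_q^2=-(\Gamma_\emptyset^q)^2$, which enters naturally through the scalar $\Gamma_\emptyset^q$ implicit in the $q$-anticommutators. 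The main obstacle is the delicate bookkeeping of the asymmetric $q^{\pm 1/2}$ factors contributed by each $q$-anticommutator: every cancellation depends on matching powers of $q^{1/2}$ across the three squared identities, and although the manipulation is mechanical, it must be executed with care.
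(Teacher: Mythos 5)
Your reduction of the Casimir property to the identity (\ref{eq: Alternative expression Casimir}) is sound and is essentially the paper's own argument: once $C_m$ is rewritten as a polynomial in $\Gamma_{[m-1]}^q$, $\Gamma_{\{m\}}^q$, $\Gamma_{\{m+1\}}^q$, $\Gamma_{[m+1]}^q$, centrality follows from Proposition \ref{prop: A contained in B} together with the expression of $\Gamma_{[1;m-1]\cup\{m+1\}}^q$ as a $q$-anticommutator of the other two generators. The gap lies in your plan for proving (\ref{eq: Alternative expression Casimir}) itself. You propose to derive it ``purely inside the rank~1 $q$-Bannai-Ito algebra on $X,Y,Z$ with central parameters $\omega_1,\dots,\omega_4$'', i.e.\ as a formal consequence of the three relations (\ref{eq: q-anticommutation Gamma}). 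This cannot succeed: in the universal algebra presented by those relations the Casimir is an \emph{additional, algebraically independent} central element --- for the isomorphic universal Askey-Wilson algebra, the center is generated by $\Omega$ together with the structure constants $\alpha,\beta,\gamma$ as independent elements \cite{Terwilliger-2011}. If (\ref{eq: Alternative expression Casimir}) were derivable from the relations alone, $\Omega$ would be a polynomial in the central parameters, which it is not. A concrete symptom: the relations see the $\omega_i$ only through the combinations $\omega_2\omega_4+\omega_1\omega_3$, $\omega_3\omega_4+\omega_1\omega_2$, $\omega_1\omega_4+\omega_2\omega_3$, and the quantity $\sum_i\omega_i^2 - (q-q^{-1})^2\omega_1\omega_2\omega_3\omega_4$ is not a polynomial in these three. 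So squaring and recombining the relations will not make the non-scalar terms collapse onto the right-hand side of (\ref{eq: Alternative expression Casimir}); the identity is a property of the specific realization inside $\ospq^{\otimes n}$, not of the abstract presentation.

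The paper therefore proves (\ref{eq: Alternative expression Casimir}) differently: for $m=2$ it is a direct computation with the explicit tensor-product operators in $\ospq^{\otimes 3}$, already carried out in \cite{Genest&Vinet&Zhedanov-2016}; for $m>2$ one applies the sequence of algebra morphisms $(\Delta\otimes 1\otimes\dots\otimes 1)\cdots(\Delta\otimes 1\otimes 1)$, which sends $\Gamma_{\{1\}}^q\mapsto\Gamma_{[m-1]}^q$, $\Gamma_{\{2\}}^q\mapsto\Gamma_{\{m\}}^q$, $\Gamma_{\{3\}}^q\mapsto\Gamma_{\{m+1\}}^q$, $\Gamma_{\{1,2,3\}}^q\mapsto\Gamma_{[m+1]}^q$, $\Gamma_{\{1,2\}}^q\mapsto\Gamma_{[m]}^q$, $\Gamma_{\{2,3\}}^q\mapsto\Gamma_{\{m,m+1\}}^q$ and $\Gamma_{\{1,3\}}^q\mapsto\Gamma_{[1;m-1]\cup\{m+1\}}^q$, and so transports the $m=2$ identity to general $m$. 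To repair your proof you would need to replace the abstract manipulation by such a realization-dependent computation (your observation that the constant equals $-(\Gamma_\emptyset^q)^2=-[1/2]_q^2$ is correct, but it does not rescue the abstract derivation).
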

\begin{proof}
	For \(m= 2\) this can be checked by direct computation, as was done in \cite{Genest&Vinet&Zhedanov-2016}. For \(m > 2\) one needs the following observation. Consider the sequence of extension morphisms
	\[
	(\Delta\otimes\underbrace{1\otimes\dots\otimes 1}_{m-1 \ \mathrm{times}})(\Delta\otimes\underbrace{1\otimes\dots\otimes 1}_{m-2 \ \mathrm{times}})\dots(\Delta\otimes 1\otimes 1).
	\]
	By a reasoning similar to the proof of Theorem \ref{prop: q-anticommutation Gamma}, one finds that this sequence has the following action on the threefold tensor product:
	\begin{align*}
	& \Gamma_{\{1\}}^q \mapsto \Gamma_{[m-1]}^q, \quad \Gamma_{\{2\}}^q \mapsto \Gamma_{\{m\}}^q, \quad \Gamma_{\{3\}}^q \mapsto \Gamma_{\{m+1\}}^q, \quad \Gamma_{\{1,2,3\}}^q \mapsto \Gamma_{[m+1]}^q, \\
	& \Gamma_{\{1,2\}}^q \mapsto \Gamma_{[m]}^q, \quad \Gamma_{\{2,3\}}^q \mapsto \Gamma_{\{m,m+1\}}^q, \quad \Gamma_{\{1,3\}}^q \mapsto \Gamma_{[1;m-1]\cup\{m+1\}}^q.
	\end{align*}
	Hence the equality for general \(m\) follows from the case \(m=2\).
	
	All operators occurring in (\ref{eq: Alternative expression Casimir}) commute with \(\Gamma_{[m]}^q\), \(\Gamma_{[1;m-1]\cup\{m+1\}}^q\) and \(\Gamma_{\{m,m+1\}}^q\), as follows from Proposition \ref{prop: A contained in B} and the relation
	\[
	\Gamma_{[1;m-1]\cup\{m+1\}}^q = \{\Gamma_{[m]}^q,\Gamma_{\{m,m+1\}}^q\}_q - (q^{1/2}+q^{-1/2})\left(\Gamma_{\{m\}}^q\Gamma_{[m+1]}^q + \Gamma_{\{m+1\}}^q\Gamma_{[m-1]}^q\right).
	\]
	Hence \(C_m\) is indeed the sought Casimir element.
\end{proof}

The \(q\)-Bannai-Ito relations in the considered subalgebra moreover allow us to state the following identities, which will be relied on in Section \ref{Paragraph - Irreducibility}.

\begin{lemma}
	For \(m\in\{2,3,\dots,n-1\}\), the operators \(\Gamma_{[m]}^q\) and \(\Gamma_{\{m,m+1\}}^q\) satisfy the relations
	\begin{align}
	\label{eq: q-BI relation with 2 generators 1}
	\begin{split}
	& \left(\Gamma_{[m]}^q\right)^2 \Gamma_{\{m,m+1\}}^q + \Gamma_{\{m,m+1\}}^q\left(\Gamma_{[m]}^q\right)^2 + (q+q^{-1})\Gamma_{[m]}^q\Gamma_{\{m,m+1\}}^q\Gamma_{[m]}^q \\
	= & \ \Gamma_{\{m,m+1\}}^q + (q^{1/2}+q^{-1/2})\left(\Gamma_{[m-1]}^q\Gamma_{[m+1]}^q + \Gamma_{\{m\}}^q\Gamma_{\{m+1\}}^q \right) \\ & + (q^{1/2}+q^{-1/2})^2\left(\Gamma_{\{m\}}^q\Gamma_{[m+1]}^q + \Gamma_{[m-1]}^q\Gamma_{\{m+1\}}^q\right)\Gamma_{[m]}^q
	\end{split}
	\end{align}
	and
	\begin{align}
	\label{eq: q-BI relation with 2 generators 2}
	\begin{split}
	& \left(\Gamma_{\{m,m+1\}}^q\right)^2\Gamma_{[m]}^q + \Gamma_{[m]}^q\left(\Gamma_{\{m,m+1\}}^q\right)^2 + (q+q^{-1})\Gamma_{\{m,m+1\}}^q\Gamma_{[m]}^q\Gamma_{\{m,m+1\}}^q \\ = & \ \Gamma_{[m]}^q + (q^{1/2}+q^{-1/2})\left(\Gamma_{\{m+1\}}^q\Gamma_{[m+1]}^q + \Gamma_{\{m\}}^q\Gamma_{[m-1]}^q \right) \\ & + (q^{1/2}+q^{-1/2})^2\left(\Gamma_{\{m\}}^q\Gamma_{[m+1]}^q + \Gamma_{[m-1]}^q\Gamma_{\{m+1\}}^q\right) \Gamma_{\{m,m+1\}}^q .
	\end{split}
	\end{align}
\end{lemma}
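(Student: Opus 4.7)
The plan is to derive both identities as algebraic consequences of the three rank $1$ $q$-Bannai-Ito relations delivered by Theorem \ref{prop: q-anticommutation Gamma} applied to the triple
\[
A = [m], \qquad B = \{m,m+1\}, \qquad C = [1;m-1]\cup\{m+1\}.
\]
One verifies that $A$ is a set of consecutive integers and matches $B$ in the sense of (\ref{def: Matching sets}), so the theorem yields, with $X = \Gamma_A^q$, $Y = \Gamma_B^q$, $Z = \Gamma_C^q$,
\[
\{X,Y\}_q = Z + (q^{1/2}+q^{-1/2})\Omega_1, \quad \{Y,Z\}_q = X + (q^{1/2}+q^{-1/2})\Omega_2, \quad \{Z,X\}_q = Y + (q^{1/2}+q^{-1/2})\Omega_3,
\]
where $\Omega_1 = \Gamma_{\{m\}}^q\Gamma_{[m+1]}^q + \Gamma_{[m-1]}^q\Gamma_{\{m+1\}}^q$, $\Omega_2 = \Gamma_{\{m+1\}}^q\Gamma_{[m+1]}^q + \Gamma_{\{m\}}^q\Gamma_{[m-1]}^q$, and $\Omega_3 = \Gamma_{[m-1]}^q\Gamma_{[m+1]}^q + \Gamma_{\{m\}}^q\Gamma_{\{m+1\}}^q$.

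The key algebraic observation consists of the pair of formal identities
\[
X^2Y + YX^2 + (q+q^{-1})XYX = q^{-1/2}X\{X,Y\}_q + q^{1/2}\{X,Y\}_q X,
\]
\[
Y^2X + XY^2 + (q+q^{-1})YXY = q^{1/2}Y\{X,Y\}_q + q^{-1/2}\{X,Y\}_q Y,
\]
both verified by expanding the $q$-anticommutators. To prove (\ref{eq: q-BI relation with 2 generators 1}), I substitute $\{X,Y\}_q = Z + (q^{1/2}+q^{-1/2})\Omega_1$ into the first identity. The $Z$-part combines as $q^{-1/2}XZ + q^{1/2}ZX = \{Z,X\}_q$, which the third rank $1$ relation rewrites as $Y + (q^{1/2}+q^{-1/2})\Omega_3$. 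The $\Omega_1$-part collapses once we show $[X,\Omega_1]=0$: each factor in $\Omega_1$ is a Casimir $\Gamma_D^q$ whose index is either contained in, containing, or disjoint from $[m]$ (explicitly, $\{m\},[m-1]\subset[m]\subset[m+1]$ and $\max([m])<\min(\{m+1\})$), so it commutes with $X$ by Proposition \ref{prop: A contained in B} or Remark \ref{remark: A B commuting}. Consequently $q^{-1/2}X\Omega_1 + q^{1/2}\Omega_1 X = (q^{1/2}+q^{-1/2})\Omega_1 X$, and assembling the contributions produces (\ref{eq: q-BI relation with 2 generators 1}).

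The derivation of (\ref{eq: q-BI relation with 2 generators 2}) is entirely parallel: one now substitutes into the second algebraic identity, groups the $Z$-terms via $\{Y,Z\}_q = X + (q^{1/2}+q^{-1/2})\Omega_2$, and exploits the commutativity of each factor of $\Omega_1$ with $Y = \Gamma_{\{m,m+1\}}^q$, again justified by Proposition \ref{prop: A contained in B} (for $\{m\},\{m+1\}\subset\{m,m+1\}\subset[m+1]$) and Remark \ref{remark: A B commuting} (for the disjoint pair $[m-1]$ and $\{m,m+1\}$). I do not expect any serious obstacle: the heart of the argument is spotting the two cubic identities that convert the left-hand sides into $q$-symmetric products of $\{X,Y\}_q$ with $X$ or $Y$, after which the three rank $1$ $q$-Bannai-Ito relations and the commutativity criteria close the argument.
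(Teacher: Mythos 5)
Your proposal is correct and follows essentially the same route as the paper: the two cubic identities you isolate are precisely the expansions of the nested $q$-anticommutators $\{\{\Gamma_{[m]}^q,\Gamma_{\{m,m+1\}}^q\}_q,\Gamma_{[m]}^q\}_q$ and $\{\Gamma_{\{m,m+1\}}^q,\{\Gamma_{[m]}^q,\Gamma_{\{m,m+1\}}^q\}_q\}_q$ that the paper's proof invokes, after which both arguments close by substituting the rank~$1$ relations of Theorem \ref{prop: q-anticommutation Gamma} and using the commutativity of the $\Omega_i$-factors with $\Gamma_{[m]}^q$ and $\Gamma_{\{m,m+1\}}^q$ via Proposition \ref{prop: A contained in B} and Remark \ref{remark: A B commuting}. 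You merely spell out the bookkeeping that the paper leaves implicit.
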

\begin{proof}
	The relation (\ref{eq: q-BI relation with 2 generators 1}) expresses the nested \(q\)-anticommutator
	\[
	\{\{\Gamma_{[m]}^q,\Gamma_{\{m,m+1\}}^q\}_q,\Gamma_{[m]}^q\}_q,
	\]
	which can be expanded using the algebra relations (\ref{eq: q-anticommutation Gamma}). Expression (\ref{eq: q-BI relation with 2 generators 2}) follows similarly upon calculating
	\[
	\{\Gamma_{\{m,m+1\}}^q,\{\Gamma_{[m]}^q,\Gamma_{\{m,m+1\}}^q\}_q\}_q.
	\]
\end{proof}

Recall that the tridiagonal algebra \cite{Terwilliger-2003} is generated by two elements \(A\) and \(A^{\ast}\) subject to the so-called tridiagonal relations
\begin{align}
\label{tridiagonal relations}
\begin{split}
[A,\, A^2A^{\ast}+A^{\ast}A^2-\beta AA^{\ast}A-\gamma(AA^{\ast}+A^{\ast}A)-\rho A^{\ast}] & = 0, \\
[A^{\ast},\, {A^{\ast}}^2A+A{A^{\ast}}^2-\beta A^{\ast}AA^{\ast}-\gamma^{\ast}(A^{\ast}A+AA^{\ast})-\rho^{\ast}A] & = 0,
\end{split}
\end{align}
for certain parameters \(\beta\), \(\gamma\), \(\gamma^{\ast}\), \(\rho\) and \(\rho^{\ast}\). These relations are also satisfied in the subalgebra generated by \(\Gamma_{[m]}^q\), \(\Gamma_{\{m,m+1\}}^q\) and \(\Gamma_{[1;m-1]\cup\{m+1\}}^q\).

\begin{corollary}
	\label{cor tridiagonal relations in q-BI}
	For any \(m\in\{2,3,\dots,n-1\}\), the tridiagonal relations (\ref{tridiagonal relations}) are satisfied by \(A = \Gamma_{[m]}^q\) and \(A^{\ast} = \Gamma_{\{m,m+1\}}^q\), under the parametrization
	\[
	\beta = -(q+q^{-1}), \quad \gamma = \gamma^{\ast} = 0, \quad \rho = \rho^{\ast} = 1.
	\]
\end{corollary}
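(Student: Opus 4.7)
The plan is to reduce the two tridiagonal relations directly to the identities \eqref{eq: q-BI relation with 2 generators 1} and \eqref{eq: q-BI relation with 2 generators 2} from the previous lemma. Substituting $\beta=-(q+q^{-1})$, $\gamma=\gamma^{\ast}=0$, $\rho=\rho^{\ast}=1$, $A=\Gamma_{[m]}^q$ and $A^{\ast}=\Gamma_{\{m,m+1\}}^q$, the inner expression of the first tridiagonal relation becomes
\[
A^2A^{\ast}+A^{\ast}A^2+(q+q^{-1})AA^{\ast}A - A^{\ast},
\]
which by \eqref{eq: q-BI relation with 2 generators 1} is exactly
\[
(q^{1/2}+q^{-1/2})\bigl(\Gamma_{[m-1]}^q\Gamma_{[m+1]}^q+\Gamma_{\{m\}}^q\Gamma_{\{m+1\}}^q\bigr)+(q^{1/2}+q^{-1/2})^2\bigl(\Gamma_{\{m\}}^q\Gamma_{[m+1]}^q+\Gamma_{[m-1]}^q\Gamma_{\{m+1\}}^q\bigr)\Gamma_{[m]}^q.
\]
So the first tridiagonal relation will follow once I verify that each of $\Gamma_{[m-1]}^q$, $\Gamma_{[m+1]}^q$, $\Gamma_{\{m\}}^q$, $\Gamma_{\{m+1\}}^q$ commutes with $A=\Gamma_{[m]}^q$. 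For the three among these whose index set is contained in or contains $[m]$ (namely $[m-1]$, $\{m\}$, $[m+1]$), this is exactly Proposition \ref{prop: A contained in B}. For $\Gamma_{\{m+1\}}^q$, I would invoke Remark \ref{remark: A B commuting}, since $\max([m])=m<m+1=\min(\{m+1\})$.

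For the second tridiagonal relation I proceed in complete parallel, rewriting its inner expression via \eqref{eq: q-BI relation with 2 generators 2} as a sum of products of $\Gamma_{\{m\}}^q$, $\Gamma_{\{m+1\}}^q$, $\Gamma_{[m-1]}^q$, $\Gamma_{[m+1]}^q$ and $A^{\ast}$. I then check that each of these four operators commutes with $A^{\ast}=\Gamma_{\{m,m+1\}}^q$: the indices $\{m\}$, $\{m+1\}\subset\{m,m+1\}\subset [m+1]$ handle three of them via Proposition \ref{prop: A contained in B}, while $\Gamma_{[m-1]}^q$ commutes with $\Gamma_{\{m,m+1\}}^q$ by Remark \ref{remark: A B commuting} because $\max([m-1])=m-1<m=\min(\{m,m+1\})$.

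There is no real obstacle here; the corollary is essentially a repackaging of the lemma combined with the commutation results already established. The only mildly delicate point is the bookkeeping: one must ensure that \emph{all} four operators appearing in the right-hand side of \eqref{eq: q-BI relation with 2 generators 1} (resp.\ \eqref{eq: q-BI relation with 2 generators 2}) commute with $A$ (resp.\ $A^{\ast}$), so that the final factor $A$ (resp.\ $A^{\ast}$) present in the $(q^{1/2}+q^{-1/2})^2$-term drops out of the outer commutator as well. Since commuting operators form a subalgebra, the conclusion is then immediate.
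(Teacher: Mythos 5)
Your proof is correct and follows essentially the same route as the paper: substitute the parameters, identify the inner expressions of the tridiagonal relations with the left-hand sides of (\ref{eq: q-BI relation with 2 generators 1}) and (\ref{eq: q-BI relation with 2 generators 2}), and observe that every operator appearing on the right-hand sides commutes with $\Gamma_{[m]}^q$, respectively $\Gamma_{\{m,m+1\}}^q$. If anything, your bookkeeping is slightly more precise than the paper's, which cites only Proposition \ref{prop: A contained in B} for all four commutations, whereas the disjoint-support cases ($\{m+1\}$ versus $[m]$, and $[m-1]$ versus $\{m,m+1\}$) are really covered by Remark \ref{remark: A B commuting}, as you note.
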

\begin{proof}
	If \(A\) is any of the sets \(\{m\}\), \(\{m+1\}\), \([m-1]\) and \([m+1]\), then \(\Gamma_A^q\) will commute with both \(\Gamma_{[m]}^q\) and \(\Gamma_{\{m,m+1\}}^q\) by Proposition \ref{prop: A contained in B}. Hence the statement follows from (\ref{eq: q-BI relation with 2 generators 1}) and (\ref{eq: q-BI relation with 2 generators 2}).
\end{proof}
This suggests that the subalgebra generated by \(\Gamma_{[m]}^q\), \(\Gamma_{\{m,m+1\}}^q\) and \(\Gamma_{[1;m-1]\cup\{m+1\}}^q\) can be considered a quotient of the tridiagonal algebra. Moreover, suppose \(V\) is a finite-dimensional and irreducible module for the considered subalgebra such that both \(\Gamma_{[m]}^q\) and \(\Gamma_{\{m,m+1\}}^q\) are diagonalizable over \(V\). Then \((\Gamma_{[m]}^q,\,\Gamma_{\{m,m+1\}}^q)\) acts on \(V\) as a tridiagonal pair by \cite[Theorem 3.10]{Terwilliger-2003}. In what follows, we will no longer focus on the action of this specific subalgebra, but rather find an explicit module of the described type for the full algebra \(\mathcal{A}_n^q\). The typical tridiagonal action on this module will also occur in Theorem \ref{prop: Three-term recurrence relation}.

Finally, we may also apply the extension procedure (\ref{def: extension procedure part 1})--(\ref{def: extension procedure part 2}) to the \(\ospq\)-generators. The only sets that lend themselves to this extension are the sets \([i;j]\) of consecutive integers. This limitation arises from the fact that the coaction \(\tau\) is only defined on the subalgebra \(\mathcal{I}\) of Proposition \ref{lemma: prop tau}. Let \(i\leq j\leq n\), then we define
\begin{align}
\label{def: A,K,P higher rank}
\begin{split}
A_{\pm}^{[i;j]} &= \underbrace{1\otimes\dots\otimes 1}_{i-1\ \mathrm{times}}\otimes \left(\overrightarrow{\prod_{k=i+1}^{j}}\tau_{k-1,k}^{[i;j]}\right)(A_{\pm})\otimes \underbrace{P\otimes\dots\otimes P}_{n-j\ \mathrm{times}}, \\
K^{[i;j]} &= \underbrace{1\otimes\dots\otimes 1}_{i-1\ \mathrm{times}}\otimes \left(\overrightarrow{\prod_{k=i+1}^{j}}\tau_{k-1,k}^{[i;j]}\right)(K)\otimes \underbrace{P\otimes\dots\otimes P}_{n-j\ \mathrm{times}}, \\
P^{[i;j]} &= \underbrace{1\otimes\dots\otimes 1}_{i-1\ \mathrm{times}}\otimes \left(\overrightarrow{\prod_{k=i+1}^{j}}\tau_{k-1,k}^{[i;j]}\right)(P)\otimes \underbrace{1\otimes\dots\otimes 1}_{n-j\ \mathrm{times}}, 
\end{split}
\end{align}
These elements essentially arise from the same extension process as the \(\Gamma_A^q\) in (\ref{def: extension procedure part 1})--(\ref{def: extension procedure part 2}), up to the factors 1 and \(P\) in the highest tensor product positions.
Using (\ref{def: Coproduct}) this can be written more explicitly as
\begin{align}
\label{def: A,K,P higher rank explicit}
\begin{split}
A_{\pm}^{[i;j]} = & \sum_{l=1}^{j-i+1}\underbrace{1\otimes\dots\otimes 1}_{i-1 \ \mathrm{times}}\otimes\bigotimes_{k=1}^{l-1}K^{-1}\otimes A_{\pm}\otimes\bigotimes_{k=l+1}^{j-i+1}KP\otimes\underbrace{P\otimes\dots\otimes P}_{n-j \ \mathrm{times}}, \\
K^{[i;j]} = & \underbrace{1\otimes\dots\otimes 1}_{i-1 \ \mathrm{times}}\otimes\underbrace{K\otimes \dots \otimes K}_{j-i+1 \ \mathrm{times}}\otimes\underbrace{P\otimes\dots\otimes P}_{n-j \ \mathrm{times}}, \\
P^{[i;j]} = & \underbrace{1\otimes\dots\otimes 1}_{i-1 \ \mathrm{times}}\otimes\underbrace{P\otimes \dots \otimes P}_{j-i+1 \ \mathrm{times}}\otimes\underbrace{1\otimes\dots\otimes 1}_{n-j \ \mathrm{times}}. 
\end{split}
\end{align}
For each set \([i;j]\) these operators satisfy the \(\ospq\)-relations (\ref{def: ospq with K}), as follows from (\ref{def: A,K,P higher rank}) and the fact that each \(\tau_{k-1,k}^{A}\) is linear and multiplicative.
One now immediately observes how the \(\Gamma_{[i;j]}^q\) are related to the above defined operators:
\begin{equation}
\label{eq: Gamma in terms of other elements}
\Gamma_{[i;j]}^q = \left(-A_+^{[i;j]}A_-^{[i;j]}+\frac{q^{-1/2}\left(K^{[i;j]}\right)^2-q^{1/2}\left(K^{[i;j]}\right)^{-2}}{q-q^{-1}}\right)P^{[i;j]}.
\end{equation}
Indeed, we know from (\ref{def: Gamma^q}) that 
\[
\Gamma_{[i;j]}^q = \underbrace{1\otimes\dots\otimes 1}_{i-1\ \mathrm{times}}\otimes\left( \overrightarrow{\prod_{k=i+1}^j}\tau_{k-1,k}^{[i;j]}\left(\left(-A_+A_-+\frac{q^{-1/2}K^2-q^{1/2}K^{-2}}{q-q^{-1}} \right)P\right)\right)\otimes \underbrace{1\otimes\dots\otimes 1}_{n-j\ \mathrm{times}}
\]
and moreover, \(\underbrace{P\otimes\dots\otimes P}_{n-j\ \mathrm{times}}\) squares to \(\underbrace{1\otimes\dots\otimes 1}_{n-j\ \mathrm{times}}\) and the morphisms \(\tau_{k-1,k}^{[i;j]}\) are linear and multiplicative.

\subsection{Connection with the Askey-Wilson algebra}
\label{Paragraph: Connection with the AW-algebra}

In this subsection we review the definition of the Askey-Wilson algebra \(AW(3)\) and its universal analog. We discuss how it is connected to the rank 1 \(q\)-Bannai-Ito algebra and explain how our results determine a higher rank extension of \(AW(3)\). Note that an alternative construction for the rank 2 Askey-Wilson algebra can be found in \cite{Post}.

The Askey-Wilson algebra or Zhedanov algebra was originally defined in \cite{Zhedanov-1991} as the algebra with three generators \(K_0,K_1\) and \(K_2\) subject to the relations
\begin{align*}
	&[K_0,K_1]_Q = K_2, \quad [K_1,K_2]_Q = BK_1+C_0K_0+D_0, \\ &[K_2,K_0]_Q = BK_0+C_1K_1+D_1,
\end{align*}
where \(B,C_0,C_1,D_0,D_1\) are five complex constants, and where we have used the \(Q\)-commutator 
\[
[A,B]_Q = Q\,AB-Q^{-1}\,BA.
\]
We will denote this algebra by \(AW(3)_Q\), where we explicitly mention the deformation parameter \(Q\) for reasons that will soon become clear. An alternative and more symmetrical presentation has been proposed by Terwilliger in \cite{Terwilliger-2011}. Absorbing the five structure constants into three new constants \(\alpha_0,\alpha_1\) and \(\alpha_2\) and applying linear transformations to the generators, one obtains the equivalent form
\begin{align}
	\label{def: Askey-Wilson symmetric}
	\begin{split}
		&\frac{[A_0,A_1]_Q}{Q^2-Q^{-2}}+A_2 = \frac{\alpha_2}{Q+Q^{-1}}, \quad \frac{[A_1,A_2]_Q}{Q^2-Q^{-2}}+A_0 = \frac{\alpha_0}{Q+Q^{-1}}, \\ &\frac{[A_2,A_0]_Q}{Q^2-Q^{-2}}+A_1 = \frac{\alpha_1}{Q+Q^{-1}}.
	\end{split}
\end{align}
Taking for the \(\alpha_i\) central elements rather than scalars, one obtains the universal Askey-Wilson algebra, which we will also denote by \(AW(3)_Q\).

Consider now the quantum algebra \(\mathcal{U}_Q(\mathfrak{sl}_2)\), which has generators \(E,F, K\) and \(K^{-1}\) and defining relations \cite{Kassel}
\begin{align}
	\label{def: Uqsl2}
	KK^{-1} = K^{-1}K = 1, \quad KE = Q^2 EK, \quad KF = Q^{-2} FK, \quad [E,F] = \frac{K-K^{-1}}{Q-Q^{-1}}.
\end{align}
The Casimir element is
\[
\Lambda = \left(Q-Q^{-1}\right)^2 EF + Q^{-1} K + QK^{-1}.
\]
This algebra is related to \(\mathfrak{sl}_Q(2)\), as defined in \cite{Equitable Presentation}, by a transformation \(Q\mapsto Q^{1/2}\), one could in fact write \(\Uqsl = \mathfrak{sl}_{Q^2}(2)\).

\(\Uqsl\) can be equipped with the structure of a Hopf algebra. We state here the definition of its coproduct
\begin{align}
	\label{def: Coproduct Uqsl2}
	\begin{split}
	\Delta(E) = E\otimes 1 + K\otimes E, &\quad \Delta(F) = F\otimes K^{-1} + 1\otimes F, \\ \Delta(K) = K\otimes K, &\quad \Delta(K^{-1}) = K^{- 1}\otimes K^{- 1},
	\end{split} 
\end{align}
its counit
\[
\epsilon(E) = \epsilon(F) = 0, \quad \epsilon(K) = 1,
\]
and its antipode
\[
S(E) = -K^{-1}E, \quad S(F) = -FK, \quad S(K) = K^{-1}.
\]
It was observed in \cite{Granovskii&Zhedanov-1993} that the Askey-Wilson algebra can be realized within the threefold tensor product of \(\Uqsl\). More recently, this has been extended to the universal Askey-Wilson algebra in \cite{Huang}. Indeed, defining
\begin{equation}
	\label{def: Lambda12}
	\begin{alignedat}{3}
		\Lambda_{\{1,2\}} &= \Delta(\Lambda)\otimes 1, \quad &\Lambda_{\{2,3\}} &= 1\otimes \Delta(\Lambda), \quad &\Lambda_{\{1,2,3\}} &= (1\otimes\Delta)\Delta(\Lambda), \\
		\Lambda_{\{1\}} &= \Lambda\otimes 1\otimes 1, \quad &\Lambda_{\{2\}} &= 1\otimes \Lambda\otimes 1, \quad &\Lambda_{\{3\}} &= 1\otimes 1\otimes \Lambda,
	\end{alignedat}
\end{equation}
and
\begin{equation}
	\label{eq: Lambda 13}
\Lambda_{\{1,3\}} = \frac{1}{Q+Q^{-1}}\left[\Lambda_{\{1\}}\Lambda_{\{3\}}+\Lambda_{\{2\}}\Lambda_{\{1,2,3\}} - \frac{1}{Q-Q^{-1}}[\Lambda_{\{1,2\}},\Lambda_{\{2,3\}}]_Q \right],
\end{equation}
one can check that the relations (\ref{def: Askey-Wilson symmetric}) are satisfied by
\begin{gather*}
	A_0 = \Lambda_{\{1,2\}}, \quad A_1 = \Lambda_{\{2,3\}}, \quad A_2 = \Lambda_{\{1,3\}}, \\
	\alpha_0 = \Lambda_{\{1\}}\Lambda_{\{2\}}+ \Lambda_{\{3\}}\Lambda_{\{1,2,3\}}, \quad \alpha_1 = \Lambda_{\{2\}}\Lambda_{\{3\}}+ \Lambda_{\{1\}}\Lambda_{\{1,2,3\}}, \\ \alpha_2 = \Lambda_{\{1\}}\Lambda_{\{3\}}+ \Lambda_{\{2\}}\Lambda_{\{1,2,3\}}.
\end{gather*}

The Askey-Wilson algebra \(AW(3)_Q\) has a deep connection with the \(q\)-Bannai-Ito algebra \(\mathcal{A}_3^q\) if the deformation parameters are related by
\begin{equation}
	\label{transformation of deformation parameter}
	Q = iq^{1/2}.
\end{equation}
To see this we will first embed the quantum algebra \(\mathcal{U}_Q(\mathfrak{sl}_2)\) into \(\ospq\).
Defining
\begin{equation}
	\label{eq: Uqsl2 ospq correspondence}
	\widetilde{E} = \frac{1+Q^{-2}}{Q-Q^{-1}}A_+, \quad \widetilde{F} = -iQ \, A_-P, \quad \widetilde{K} = K^2P, \quad \widetilde{K^{-1}} = K^{-2}P,
\end{equation}
one can easily check that
\[
\widetilde{K}\widetilde{K^{-1}} = \widetilde{K^{-1}}\widetilde{K} = 1, \quad \widetilde{K}\widetilde{E} = Q^2\widetilde{E}\widetilde{K}, \quad \widetilde{K}\widetilde{F} = Q^{-2}\widetilde{F}\widetilde{K}, \quad [\widetilde{E},\widetilde{F}] = \frac{\widetilde{K}-\widetilde{K^{-1}}}{Q-Q^{-1}},
\]
as required by the \(\mathcal{U}_Q(\mathfrak{sl}_2)\)-relations (\ref{def: Uqsl2}). Under this embedding, the Casimir operator \(\Lambda\) is transformed into a scalar multiple of the Casimir element \(\Gamma^q\) for \(\ospq\):
\[
\widetilde{\Lambda} = \left(Q-Q^{-1}\right)^2 \widetilde{E}\widetilde{F} + Q^{-1} \widetilde{K} + Q\widetilde{K^{-1}} = -i(q-q^{-1})\Gamma^q.
\]
The tensor product elements (\ref{def: Lambda12}) and (\ref{eq: Lambda 13}) can also be translated to \(\ospq^{\otimes 3}\) using (\ref{eq: Uqsl2 ospq correspondence}). The resulting \(\widetilde{\Lambda_A}\) still satisfy the Askey-Wilson relations (\ref{def: Askey-Wilson symmetric}). For the first relation this is expressed explicitly by
\begin{equation}
	\label{eq: q-BI relation with Uqsl2}
	\frac{Q\,\widetilde{\Lambda}_{\{1,2\}}\widetilde{\Lambda}_{\{2,3\}}-Q^{-1}\,\widetilde{\Lambda}_{\{2,3\}}\widetilde{\Lambda}_{\{1,2\}}}{Q^2-Q^{-2}} + \widetilde{\Lambda}_{\{1,3\}} = \frac{\widetilde{\Lambda}_{\{1\}}\widetilde{\Lambda}_{\{3\}}+\widetilde{\Lambda}_{\{2\}}\widetilde{\Lambda}_{\{1,2,3\}}}{Q+Q^{-1}}.
\end{equation}
Writing now
\begin{equation}
\label{correspondence Gamma Lambda}
\widetilde{\Gamma_A^q} = \frac{i}{q-q^{-1}}\widetilde{\Lambda_A},
\end{equation}
for every \(A\subseteq \{1,2,3\}\) and writing every occurrence of \(Q\) as \(iq^{1/2}\), (\ref{eq: q-BI relation with Uqsl2}) is transformed into
\begin{equation}
	\label{eq: q-BI new embedding}
	\{\widetilde{\Gamma_{\{1,2\}}^q},\widetilde{\Gamma_{\{2,3\}}^q\}_q} = \widetilde{\Gamma_{\{1,3\}}^q} + (q^{1/2}+q^{-1/2})\left(\widetilde{\Gamma_{\{1\}}^q}\widetilde{\Gamma_{\{3\}}^q} + \widetilde{\Gamma_{\{2\}}^q}\widetilde{\Gamma_{\{1,2,3\}}^q}\right),
\end{equation}
which is precisely the first \(q\)-Bannai-Ito relation (\ref{eq: First BI-relation}). Similar statements hold for the second and third relation. We may conclude that \(AW(3)_Q\) and \(\mathcal{A}_3^q\) are isomorphic.

In Section \ref{Paragraph: Higher rank q-BI}, we have extended the rank 1 \(q\)-Bannai-Ito algebra to arbitrary rank using the algorithm (\ref{def: extension procedure part 1})--(\ref{def: extension procedure part 2}). The established isomorphism suggests that the same procedure can be used to define a rank \(n-2\) Askey-Wilson algebra \(AW(n)_Q\).

However, the operators \(\widetilde{\Gamma_A^q}\) for \(\vert A\vert \in\{2,3\}\) do not coincide with the tensor product elements \(\Gamma_A^q\) defined in (\ref{def: Intermediate Casimirs})-(\ref{def: Total Casimir}). This is because the coproduct (\ref{def: Coproduct Uqsl2}) on \(\mathcal{U}_Q(\mathfrak{sl}_2)\) is not compatible with the one on \(\ospq\) given in (\ref{def: Coproduct}) under the correspondence (\ref{eq: Uqsl2 ospq correspondence}). The relations (\ref{eq: q-BI new embedding}) hence establish a different embedding of the \(q\)-Bannai-Ito algebra inside \(\ospq^{\otimes 3}\). To construct the \(AW(n)_Q\)-generators one will thus need to repeat the process established in Section \ref{Paragraph: Higher rank q-BI}. Let us start by considering the subalgebra \(\mathcal{J}\) of \(U_Q(\mathfrak{sl}_2)\) generated by the elements \(EK^{-1}\), \(F\), \(K^{-1}\) and \(\Lambda\). As an analog of Proposition \ref{lemma: prop tau}, one can show the following.

\begin{proposition}
	The algebra \(\mathcal{J}\) is a left coideal subalgebra of \(U_Q(\mathfrak{sl}_2)\) and a left \(U_Q(\mathfrak{sl}_2)\)-comodule with coaction \(\tau: \mathcal{J}\to U_Q(\mathfrak{sl}_2)\otimes\mathcal{J}\) defined by
	\begin{align*}
	\tau(EK^{-1}) & = K^{-1}\otimes EK^{-1}, \\
	\tau(F) & = K\otimes F - Q^{-3}(Q-Q^{-1})^2 F^2K \otimes EK^{-1} \\ & \phantom{=} + Q^{-1}(Q+Q^{-1}) FK \otimes K^{-1} - Q^{-1} FK\otimes \Lambda \\
	\tau(K^{-1}) & = 1\otimes K^{-1}-Q^{-1}(Q-Q^{-1})^2 F\otimes EK^{-1} \\
	\tau(\Lambda) & = 1\otimes \Lambda.
	\end{align*}
\end{proposition}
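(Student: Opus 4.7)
The plan is to mirror the proof of Proposition~\ref{lemma: prop tau} by verifying three things: (i) \(\Delta(\mathcal{J})\subset\Uqsl\otimes\mathcal{J}\); (ii) the map \(\tau\) is a well-defined algebra morphism on \(\mathcal{J}\); and (iii) \(\tau\) satisfies \((1\otimes\tau)\tau=(\Delta\otimes 1)\tau\) together with \((\epsilon\otimes 1)\tau\cong \mathrm{id}\).

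For (i), I would compute \(\Delta\) on each generator using (\ref{def: Coproduct Uqsl2}). The cases \(EK^{-1}\), \(F\) and \(K^{-1}\) are immediate: \(\Delta(EK^{-1}) = EK^{-1}\otimes K^{-1} + 1\otimes EK^{-1}\), \(\Delta(F) = F\otimes K^{-1} + 1\otimes F\), and \(\Delta(K^{-1}) = K^{-1}\otimes K^{-1}\), all with right factors in \(\mathcal{J}\). For \(\Lambda\) the computation is longer; one expands \(\Delta(E)\Delta(F)\) and then uses the identity \((Q-Q^{-1})^2 EF = \Lambda - Q^{-1}K - QK^{-1}\) to eliminate every occurrence of \(K\) and \(EF\) from the second tensor slot. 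The regrouping produces
\[
\Delta(\Lambda) = \Lambda\otimes K^{-1} + K\otimes\Lambda - (Q+Q^{-1})K\otimes K^{-1} + (Q-Q^{-1})^2\bigl(E\otimes F + KF\otimes EK^{-1}\bigr),
\]
all of whose right-hand factors sit in \(\mathcal{J}\).

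Property (iii) I would check on each generator: the counit axiom is a direct substitution using \(\epsilon(K^{-1})=1\) and \(\epsilon(E)=\epsilon(F)=0\) in the four formulas for \(\tau\), while the coassociativity of the coaction is a term-by-term comparison that uses (\ref{def: Coproduct Uqsl2}) together with the explicit expressions for \(\tau\).

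The main effort lies in (ii). First I would identify the defining relations of \(\mathcal{J}\) inherited from (\ref{def: Uqsl2}); these include \(K^{-1}(EK^{-1}) = Q^{-2}(EK^{-1})K^{-1}\) and \(K^{-1}F = Q^2 FK^{-1}\), the quantum commutation
\[
Q^{-1}(EK^{-1})F - Q\,F(EK^{-1}) = \frac{Q\bigl(1-(K^{-1})^2\bigr)}{Q-Q^{-1}},
\]
the centrality of \(\Lambda\), and the identity
\[
QK^{-1}\Lambda - Q^{-1}(Q-Q^{-1})^2(EK^{-1})F - Q^2(K^{-1})^2 = 1
\]
obtained by right-multiplying the definition of \(\Lambda\) by \(K^{-1}\). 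Each of these must then be shown to remain true when every generator is replaced by its \(\tau\)-image. I expect this to be the main obstacle, exactly as in the proof of Proposition~\ref{lemma: prop tau}: the calculation is routine but long and error-prone. The most delicate case is the cross-relation between \(EK^{-1}\) and \(F\), where the terms in \(\tau(F)\) proportional to \(F^2K\), \(FK\otimes K^{-1}\) and \(FK\otimes\Lambda\) conspire to produce precisely the right-hand side; this is what forces the particular form of the \(\Lambda\)-contribution and the coefficients \(Q^{-3}(Q-Q^{-1})^2\), \(Q^{-1}(Q+Q^{-1})\) and \(-Q^{-1}\) appearing in the statement.
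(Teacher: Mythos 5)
Your proposal is correct and follows essentially the same route as the paper, whose proof of this proposition simply refers back to the three-part verification (coideal property, preservation of the defining relations of the coideal subalgebra, and the coaction axioms) used for Proposition \ref{lemma: prop tau}; your explicit formula for \(\Delta(\Lambda)\) and your list of relations in \(\mathcal{J}\) are accurate. If anything you are slightly more careful than the paper in recording the relation \(QK^{-1}\Lambda - Q^{-1}(Q-Q^{-1})^2(EK^{-1})F - Q^2(K^{-1})^2 = 1\), which does hold in \(\mathcal{J}\) and should indeed be among the identities checked to be \(\tau\)-invariant.
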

\begin{proof}
	Along the same lines as Proposition \ref{lemma: prop tau}.
\end{proof}

Now one can copy (\ref{def: extension procedure part 1}) to define \(\Lambda_A \in \Uqsl^{\otimes n}\) for every \(A\subseteq [n]\), i.e.
\begin{equation}
\Lambda_A =  \underbrace{1\otimes\dots\otimes 1}_{\min(A)-1\ \mathrm{times}}\otimes \left(\overrightarrow{\prod_{k=\min(A)+1}^{\max(A)}}\tau_{k-1,k}^{A}\right)(\Lambda)\otimes \underbrace{1\otimes\dots\otimes 1}_{n-\max(A)\ \mathrm{times}},
\end{equation}
with \(\tau_{k-1,k}^{A}\) as in (\ref{def: extension procedure part 2}). This leads us to state the following.
\begin{definition}
For any \(n\geq 3\), the Askey-Wilson algebra \(AW(n)_Q\) of rank \(n-2\) is the subalgebra of \(U_Q(\mathfrak{sl}_2)^{\otimes n}\) generated by the elements \(\Lambda_A\) with \(A\subseteq [n]\).	
\end{definition}
By Theorem \ref{prop: q-anticommutation Gamma} these elements satisfy the relations
\begin{align*}
	\frac{[\Lambda_A,\Lambda_B]_Q}{Q^2-Q^{-2}}+\Lambda_{C} & = \frac{1}{Q+Q^{-1}}\left(\Lambda_{A\cap B}\Lambda_{A\cup B} + \Lambda_{A\setminus(A\cap B)}\Lambda_{B\setminus(A\cap B)}\right), \\
	\frac{[\Lambda_B,\Lambda_C]_Q}{Q^2-Q^{-2}}+\Lambda_{A} & = \frac{1}{Q+Q^{-1}}\left(\Lambda_{B\cap C}\Lambda_{B\cup C} + \Lambda_{B\setminus(B\cap C)}\Lambda_{C\setminus(B\cap C)}\right), \\
	\frac{[\Lambda_C,\Lambda_A]_Q}{Q^2-Q^{-2}}+\Lambda_{B} & = \frac{1}{Q+Q^{-1}}\left(\Lambda_{C\cap A}\Lambda_{C\cup A} + \Lambda_{C\setminus(C\cap A)}\Lambda_{A\setminus(C\cap A)}\right),
\end{align*}
for \(A\) a set of consecutive integers that matches \(B\), as defined in (\ref{def: Matching sets}), and \(C = (A\cup B)\setminus(A\cap B)\). 

Finally, for any \(m\in \{2,3,\dots,n-1\}\) one can consider the subalgebra of \(AW(n)_Q\) generated by \(\Lambda_{[m]}\), \(\Lambda_{\{m,m+1\}}\) and \(\Lambda_{[1;m-1]\cup\{m+1\}}\). Inside this subalgebra one may identify the element
\begin{align*}
\Omega_m = & \ \frac{1}{(Q^2-Q^{-2})^2}\left[Q\,\Lambda_{\{m,m+1\}}\Lambda_{[1;m-1]\cup\{m+1\}}\Lambda_{[m]}\right. \\ & + Q^{2}\left(\Lambda_{\{m,m+1\}}\right)^2+Q^{-2}\left(\Lambda_{[1;m-1]\cup\{m+1\}}\right)^2+ Q^{2}\left(\Lambda_{[m]}\right)^2 \\ & - Q\left(\Lambda_{\{m\}}\Lambda_{\{m+1\}}+\Lambda_{[m-1]}\Lambda_{[m+1]} \right)\Lambda_{\{m,m+1\}} - Q\left(\Lambda_{\{m\}}\Lambda_{[m-1]}+\Lambda_{\{m+1\}}\Lambda_{[m+1]} \right)\Lambda_{[m]} \\ & \left.- Q^{-1}\left(\Lambda_{\{m+1\}}\Lambda_{[m-1]}+\Lambda_{\{m\}}\Lambda_{[m+1]}\right)\Lambda_{[1;m-1]\cup\{m+1\}}\right].
\end{align*}
In analogy to Lemma \ref{lemma: Casimir C_m} one shows that \(\Omega_m\) is a Casimir element for the considered subalgebra. Under the embedding (\ref{eq: Uqsl2 ospq correspondence}) and the correspondence (\ref{transformation of deformation parameter})--(\ref{correspondence Gamma Lambda}), \(\Omega_m\) transforms to the element \(C_m\) of \(\mathcal{A}_n^q\) defined in (\ref{def: Casimir operator C}). 
Note that for \(m = 2\) this element becomes a scalar multiple of the Casimir element for \(AW(3)_Q\) proposed by Terwilliger in \cite[Lemma 6.1]{Terwilliger-2011}, where moreover five alternative expressions for \(\Omega_3\) are given which easily extend to \(\Omega_m\). Furthermore, it is straightforward to translate the equation (\ref{eq: Alternative expression Casimir}) and the identities (\ref{eq: q-BI relation with 2 generators 1}) and (\ref{eq: q-BI relation with 2 generators 2}) to the Askey-Wilson setting, under the aforementioned correspondences. 

It is also worth noting that the tridiagonal relations (\ref{tridiagonal relations}) are satisfied by \(A = \Lambda_{[m]}\) and \(A^{\ast} = \Lambda_{\{m,m+1\}}\) with parametrization \(\beta = Q^2+Q^{-2}\), \(\gamma = \gamma^{\ast} = 0\) and \(\rho = \rho^{\ast} = -(Q^2-Q^{-2})^2\), i.e. we have the following identities
\begin{align*}
[\Lambda_{[m]}, \Lambda_{[m]}^2\Lambda_{\{m,m+1\}}+\Lambda_{\{m,m+1\}}\Lambda_{[m]}^2-\beta\Lambda_{[m]}\Lambda_{\{m,m+1\}}\Lambda_{[m]}-\rho\Lambda_{\{m,m+1\}}] & = 0, \\
[\Lambda_{\{m,m+1\}}, \Lambda_{\{m,m+1\}}^2\Lambda_{[m]}+\Lambda_{[m]}\Lambda_{\{m,m+1\}}^2-\beta\Lambda_{\{m,m+1\}}\Lambda_{[m]}\Lambda_{\{m,m+1\}}-\rho^{\ast}\Lambda_{[m]}] & = 0.
\end{align*}
This follows from the remarks above and forms an analog for Corollary \ref{cor tridiagonal relations in q-BI}.

\begin{remark}
To conclude, we remark that in \cite[Section 3]{Terwilliger-2011} two automorphisms of \(AW(3)_Q\) were identified which generate the modular group \(\mathrm{PSL}_2(\mathbb{Z})\). In our notation, these morphisms act as follows:
\begin{align*}
P: (\Lambda_{\{1,2\}},\, \Lambda_{\{2,3\}},\, \Lambda_{\{1,3\}},\, \alpha,\, \beta,\, \gamma) &\to (\Lambda_{\{2,3\}},\, \Lambda_{\{1,3\}},\, \Lambda_{\{1,2\}},\, \beta,\,\gamma,\,\alpha), \\
S: (\Lambda_{\{1,2\}},\, \Lambda_{\{2,3\}},\, \Lambda_{\{1,3\}},\, \alpha,\, \beta,\, \gamma) &\to (\Lambda_{\{2,3\}},\, \Lambda_{\{1,2\}},\, \Lambda_{\{1,3\}}+\frac{[\Lambda_{\{1,2\}},\Lambda_{\{2,3\}}]}{Q-Q^{-1}},\, \beta,\, \alpha,\, \gamma)
\end{align*}
where we have written \(\alpha\), \(\beta\) and \(\gamma\) to denote the central elements
\[
\alpha = \Lambda_{\{1\}}\Lambda_{\{2\}}+\Lambda_{\{3\}}\Lambda_{\{1,2,3\}}, \quad \beta = \Lambda_{\{2\}}\Lambda_{\{3\}}+\Lambda_{\{1\}}\Lambda_{\{1,2,3\}}, \quad \gamma = \Lambda_{\{1\}}\Lambda_{\{3\}}+\Lambda_{\{2\}}\Lambda_{\{1,2,3\}}.
\]
In this notation, it is manifest that \(P\) corresponds to a permutation \(1\mapsto 2\mapsto 3\mapsto 1\) of the indexing sets. It would be of interest to look for analogs of these automorphisms in \(AW(n)_Q\) for \(n \geq 4\). We plan to look into this in future work.
\end{remark}

\section{The $\mathbb{Z}_2^n$ $q$-Dirac-Dunkl model}
\label{Section: Z2n q-Dirac-Dunkl model}
An explicit realization of the algebra \(\ospq\)  in terms of operators acting on functions of argument \(x\) was proposed in \cite{Genest&Vinet&Zhedanov-2016}. This inspires us to state the following definitions.

Consider the space \(\mathcal{P}(\mathbb{R}^n)\) of complex-valued polynomials in \(n\) real variables. Let \(\mu_1,\dots,\mu_n>0\) denote positive real parameters and take \(\gamma_i = \mu_i+\frac12\). Let \(r_i\) be the reflection with respect to the hyperplane \(x_i = 0\), i.e. \(r_if(x_1,\dots,x_n) = f(x_1,\dots,-x_i,\dots,x_n)\).

We define the \(q\)-Dunkl operator associated to the reflection group \(\mathbb{Z}_2^n\) as 
\begin{equation}
\label{def: q-Dunkl}
D_i^q = \frac{q^{\mu_i}}{q-q^{-1}}\left(\frac{T_{q,i}-r_i}{x_i} \right)-\frac{q^{-\mu_i}}{q-q^{-1}}\left(\frac{T_{q,i}^{-1}-r_i}{x_i}\right),
\end{equation}
where \(T_{q,i}\) denotes the \(q\)-shift operator with respect to the variable \(x_i\), sending \(f(x_1,\dots,x_n)\) to \(f(x_1,\dots,qx_i,\dots,x_n)\). Note that \(D_i^q\) reduces to the familiar \(\mathbb{Z}_2^n\) Dunkl operator \(\partial_{x_i} + \frac{\mu_i}{x_i}(1-r_i)\), see e.g. \cite{DTAMS, DX}, in the limit \(q\to 1\). The \(q\)-Dunkl operators, like their non-\(q\)-deformed counterparts, mutually commute: \([D_i^q,D_j^q] = 0\) for all \(i,j\in[n]\).

The \(q\)-deformed Dirac-Dunkl operator \(D_{[n]}^q\) and the position operator \(X_{[n]}^q\) are introduced as
\begin{equation}
D_{[n]}^q = \sum_{i\in[n]} D_i^qR_{[n],i}^q, \qquad X_{[n]}^q = \sum_{i\in [n]} x_iR_{[n],i}^q,
\end{equation}
where
\begin{equation}
\label{def: R_n,i^q}
R_{[n],i}^q = \left(\prod_{\substack{j \in [n]\\j<i}} q^{-\frac{\gamma_j}{2}}\left(T_{q,j}\right)^{-1/2}\right)\left(\prod_{\substack{j \in [n]\\j>i}}q^{\frac{\gamma_j}{2}}\left(T_{q,j}\right)^{1/2}r_j\right).
\end{equation}

Note that \(D_{[n]}^q\) and \(X_{[n]}^q\) are precisely the realizations of the elements \(A_{-}^{[n]}\) and \(A_+^{[n]}\) from (\ref{def: A,K,P higher rank explicit}) respectively, under the correspondence
\begin{equation}
\label{Realization of ospq}
A_+ \rightarrow x, \quad A_- \rightarrow D^q, \quad K \rightarrow q^{\gamma/2}T_q^{1/2}, \quad P\rightarrow r,
\end{equation}
which realizes the algebra \(\ospq\) in terms of \(q\)-shift operators and reflections. The additional generator \(A_0\) is then mapped to the element
\[
A_0 \rightarrow x\partial_x + \gamma,
\]
which follows from the expressions \(q^{x\partial_x} = T_q\) and \(q^{\frac{A_0}{2}} = K\). Writing
\begin{equation}
T_{q,[n]}=\prod_{i\in [n]}T_{q,i}, \qquad \gamma_{[n]} = \sum_{i\in [n]}\gamma_i = \sum_{i\in [n]}\mu_i + \frac{n}{2},
\end{equation}
we see that the elements \(K^{[n]}\) and \(P^{[n]}\) are realized inside the \(\mathbb{Z}_2^n\) \(q\)-Dirac-Dunkl model by \(q^{\frac{\gamma_{[n]}}{2}}T_{q,[n]}^{1/2}\) and \(\prod_{i\in[n]}r_i\) respectively.

The same identifications can be made for other sets. In Section \ref{Paragraph: Higher rank q-BI} we have associated to each set \(A\) of integers between 1 and \(n\) the element \(\Gamma_A^q\), and to each set \([i;j]\) of consecutive integers an \(n\)-fold extension of the generators \(A_{\pm}\) and \(K\). These tensor product elements can now be translated to the \(\mathbb{Z}_2^n\) \(q\)-Dirac-Dunkl setting via (\ref{Realization of ospq}). The elements \(A_-^{[i;j]}, A_+^{[i;j]}\) and \(K^{[i;j]}\) are then realized as follows:
\begin{equation}
\label{def: D_A^q,X_A^q}
D_{[i;j]}^q = \sum_{k=i}^jD_k^qR_{[i;j],k}^q, \quad X_{[i;j]}^q = \sum_{k=i}^jx_kR_{[i;j],k}^q, \quad q^{\frac{\gamma_{[i;j]}}{2}}T_{q,[i;j]}^{1/2}=\prod_{k=i}^jq^{\frac{\gamma_k}{2}}T_{q,k}^{1/2},
\end{equation}
with
\begin{equation}
\label{def: R_i,j,k^q}
R_{[i;j],k}^q = \left(\prod_{l=i}^{k-1}q^{-\frac{\gamma_l}{2}}\left(T_{q,l}\right)^{-1/2}\right)\left(\prod_{l=k+1}^{j}q^{\frac{\gamma_l}{2}}\left(T_{q,l}\right)^{1/2}r_l\right)\left(\prod_{l=j+1}^nr_l\right).
\end{equation}

We will continue to write \(\Gamma_A^q\) for the realization of the operators defined in (\ref{def: extension procedure part 1}) under the correspondence (\ref{Realization of ospq}). In this context we will call \(\Gamma_A^q\) a spherical \(q\)-Dirac-Dunkl operator. These operators then generate an explicit realization of our rank \(n-2\) \(q\)-Bannai-Ito algebra \(\mathcal{A}_n^q\).

\begin{remark}
The \(\mathbb{Z}_2^n\) Dunkl operator \cite{DTAMS} is defined as
\[
T_i = \partial_{x_i} + \frac{\mu_i}{x_i}(1-r_i).
\]
In \cite[Section 5]{DBAdv} the scalar \(\mathbb{Z}_2^n\) Dirac-Dunkl model was introduced via the operators
\begin{equation}
\label{def: Dirac-Dunkl model q = 1}
D_A = \sum_{i\in A}T_iR_i, \quad X_A = \sum_{i\in A}x_iR_i, \quad \Gamma_A = \left(\frac{[D_A,X_A]-1}{2}\right)\prod_{i\in A}r_i,
\end{equation}
where \(R_i = \prod_{j=i+1}^n r_j\). In the limit \(q\rightarrow 1\), we have
\[
D_i^q \rightarrow T_i, \quad R_{[i;j],k}^q\rightarrow R_k. 
\]
Hence the operators \(D_{[i;j]}^q\) and \(X_{[i;j]}^q\) reduce to the scalar \(\mathbb{Z}_2^n\) Dirac-Dunkl operator \(D_{[i;j]}\) and position operator \(X_{[i;j]}\) and similarly \(\Gamma_A^q\) reduces to \(\Gamma_A\). From this point of view, the extension procedure (\ref{def: extension procedure part 1})-(\ref{def: extension procedure part 2}) brings in fact a tensor product underpinning to the rank \(n-2\) and \(q=1\) Bannai-Ito algebra, which had not been established before. Note that the complexity of the expression (\ref{def:tau isomorphism}) defining the coaction \(\tau\) is remarkably reduced in the limit \(q\rightarrow 1\). 
\end{remark} 

Observe that the operators \(\Gamma_{\{i\}}^q\) will act as scalars in this representation, namely
\[
\Gamma_{\{i\}}^q = [\mu_i]_q.
\]
Explicit expressions for \(\Gamma_A^q\) in the rank 2 case can be deduced from Appendix A upon making the identifications (\ref{Realization of ospq}). The expressions in arbitrary rank follow analogously from (\ref{def: extension procedure part 1}) and (\ref{def: extension procedure part 2}). For sets \(A = [i;j]\) we can propose a general expression.

\begin{proposition}
	For sets \([i;j]\) with \(i<j\leq n\), we can write
	\begin{align}
	\label{eq: Gamma consecutive sets}
	\begin{split}
	\Gamma_{[i;j]}^q = & \left[\sum_{\substack{\{k,l\}\subseteq [i;j] \\ k< l}} M_{kl} + \sum_{k=i}^j\left([\mu_k]_qr_k -\frac{q^{\mu_k}}{q-q^{-1}}T_{q,k}+\frac{q^{-\mu_k}}{q-q^{-1}}T_{q,k}^{-1} \right) \left(R_{[i;j],k}^q\right)^2
	\right. \\  &\  + \left.\frac{q^{\gamma_{[i;j]}-\frac12}}{q-q^{-1}}T_{q,[i;j]}-\frac{q^{-(\gamma_{[i;j]}-\frac12)}}{q-q^{-1}}T_{q,[i;j]}^{-1}\right]\prod_{k=i}^jr_k,
	\end{split}
	\end{align}
	where \(M_{kl} = (q^{-1/2}x_kD_l^q-q^{1/2}x_lD_k^q)R_{[i;j],k}^qR_{[i;j],l}^q\).
\end{proposition}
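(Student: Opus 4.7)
The starting point is the identity (\ref{eq: Gamma in terms of other elements}), which under the realization (\ref{Realization of ospq}) together with (\ref{def: D_A^q,X_A^q}) becomes
\begin{align*}
\Gamma_{[i;j]}^q = \left(-X_{[i;j]}^q D_{[i;j]}^q + \frac{q^{\gamma_{[i;j]}-1/2}T_{q,[i;j]} - q^{-(\gamma_{[i;j]}-1/2)}T_{q,[i;j]}^{-1}}{q-q^{-1}}\right)\prod_{k=i}^j r_k.
\end{align*}
The two $T_{q,[i;j]}$-terms and the overall factor $\prod_{k=i}^j r_k$ appearing in (\ref{eq: Gamma consecutive sets}) are therefore immediate, and it only remains to identify $-X_{[i;j]}^q D_{[i;j]}^q$ with the combination of $M_{kl}$-terms and diagonal terms on the right-hand side.

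Expanding through (\ref{def: D_A^q,X_A^q}), the plan is to write $X_{[i;j]}^q D_{[i;j]}^q = \sum_{k,l=i}^j x_k R_{[i;j],k}^q D_l^q R_{[i;j],l}^q$ and treat the diagonal and off-diagonal contributions separately. For the diagonal terms, a glance at (\ref{def: R_i,j,k^q}) shows that $R_{[i;j],k}^q$ involves only operators in variables $x_m$ with $m\neq k$, so it commutes with $x_k$ and with $D_k^q$; the diagonal term therefore collapses to $x_k D_k^q (R_{[i;j],k}^q)^2$. A direct substitution of (\ref{def: q-Dunkl}) yields
\begin{align*}
x_k D_k^q = \frac{q^{\mu_k}}{q-q^{-1}}T_{q,k} - \frac{q^{-\mu_k}}{q-q^{-1}}T_{q,k}^{-1} - [\mu_k]_q\, r_k,
\end{align*}
which upon negation matches the second sum of (\ref{eq: Gamma consecutive sets}) exactly.

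For the off-diagonal part I pair the term indexed by $(k,l)$ with the term indexed by $(l,k)$ for each $k<l$. The essential tool is two commutation relations, to be verified by a short computation from (\ref{def: q-Dunkl}):
\begin{align*}
r_l D_l^q = -D_l^q r_l, \qquad T_{q,l}^{1/2} D_l^q = q^{-1/2}\, D_l^q T_{q,l}^{1/2}.
\end{align*}
Inspecting (\ref{def: R_i,j,k^q}), one sees that for $k<l\leq j$ the only factor of $R_{[i;j],k}^q$ which fails to commute with $D_l^q$ is the block $q^{\gamma_l/2} T_{q,l}^{1/2} r_l$, producing $R_{[i;j],k}^q D_l^q = -q^{-1/2}\, D_l^q R_{[i;j],k}^q$. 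Symmetrically, the only factor of $R_{[i;j],l}^q$ failing to commute with $D_k^q$ is $q^{-\gamma_k/2} T_{q,k}^{-1/2}$, giving $R_{[i;j],l}^q D_k^q = q^{1/2}\, D_k^q R_{[i;j],l}^q$. Since the various $R_{[i;j],m}^q$'s commute among themselves, combining the two paired terms in $-X_{[i;j]}^q D_{[i;j]}^q$ yields precisely $M_{kl} = (q^{-1/2}x_k D_l^q - q^{1/2}x_l D_k^q) R_{[i;j],k}^q R_{[i;j],l}^q$, as required.

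The main obstacle is the careful verification of the two commutation relations and the subsequent bookkeeping of half-integer powers of $q$ together with the sign produced by $r_l D_l^q = -D_l^q r_l$ as $D_l^q$ is transported across the blocks of $R_{[i;j],k}^q$; once these ingredients are established, everything else is routine assembly.
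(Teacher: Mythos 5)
Your proposal is correct and follows the same route as the paper: the paper's proof is exactly the one-line observation that (\ref{eq: Gamma consecutive sets}) is the translation of the identity (\ref{eq: Gamma in terms of other elements}) to the model via (\ref{Realization of ospq}), and you simply carry out that translation explicitly. Your expansion of $-X_{[i;j]}^q D_{[i;j]}^q$ into diagonal and paired off-diagonal terms, and the two commutation relations you invoke, all check out.
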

\begin{proof}
	This follows upon translating the identity (\ref{eq: Gamma in terms of other elements}) to the \(q\)-Dirac-Dunkl model using (\ref{Realization of ospq}).
\end{proof}

The next proposition reveals how the operators \(\Gamma_A^q\) are related to the \(q\)-Dirac-Dunkl operator and the position operator.

\begin{proposition}
	\label{prop: Joint symmetries}
	The operators \(\Gamma_A^q\) are joint symmetries of \(D_{[n]}^q\) and \(X_{[n]}^q\): for every set \(A\subseteq [n]\) we have
	\[
	[D_{[n]}^q,\Gamma_A^q] = 0, \qquad [X_{[n]}^q,\Gamma_A^q] = 0.
	\]
\end{proposition}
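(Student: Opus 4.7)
The strategy is to reduce the claim to commutation with the operators indexed by consecutive intervals, and then exploit the tensor product structure together with the centrality of $\Gamma^q$ in $\ospq$. By Corollary \ref{cor: Generating set}, the elements $\Gamma_{[i;j]}^q$ with $1\le i\le j\le n$ generate $\mathcal{A}_n^q$; since the centralizer of $\{D_{[n]}^q, X_{[n]}^q\}$ in the $q$-Dirac--Dunkl realization is a subalgebra, it suffices to prove $[\Gamma_{[i;j]}^q, D_{[n]}^q]=[\Gamma_{[i;j]}^q, X_{[n]}^q]=0$ for all consecutive intervals $[i;j]\subseteq[n]$.

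Translating via (\ref{Realization of ospq}), $D_{[n]}^q$ and $X_{[n]}^q$ realize $A_{\pm}^{[n]}$, which by (\ref{def: A,K,P higher rank explicit}) decompose as $A_{\pm}^{[n]} = \sum_{l=1}^n T_l^{\pm}$ with $T_l^{\pm} = (K^{-1})^{\otimes(l-1)}\otimes A_{\pm}\otimes (KP)^{\otimes(n-l)}$. For a consecutive set $[i;j]$ the extension procedure (\ref{def: extension procedure part 1})--(\ref{def: extension procedure part 2}) uses only the coproduct, so $\Gamma_{[i;j]}^q = 1^{\otimes(i-1)}\otimes G\otimes 1^{\otimes(n-j)}$ where $G$ is the $(j-i)$-fold iterated coproduct of $\Gamma^q$ in $\ospq^{\otimes(j-i+1)}$. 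I split the sum over $l$ according to whether $l<i$, $l\in[i;j]$, or $l>j$, and handle each range separately.

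For $l\notin[i;j]$, the tensor factors of $T_l^{\pm}$ at positions $i,\ldots,j$ are uniformly $KP$ (if $l<i$) or uniformly $K^{-1}$ (if $l>j$), and therefore equal the $(j-i)$-fold iterated coproduct of $KP$, respectively $K^{-1}$. Since $[\Gamma^q, KP] = [\Gamma^q, K^{-1}] = 0$ in $\ospq$ and the coproduct is an algebra morphism, these factors commute with $G$, giving $[T_l^{\pm},\Gamma_{[i;j]}^q] = 0$. For $l\in[i;j]$, the outer factors of $T_l^{\pm}$ at positions $1,\ldots,i-1$ and $j+1,\ldots,n$ are $(K^{-1})^{\otimes(i-1)}$ and $(KP)^{\otimes(n-j)}$, independently of $l$, while the middle factors $(K^{-1})^{\otimes(l-i)}\otimes A_{\pm}\otimes(KP)^{\otimes(j-l)}$ sum over $l\in[i;j]$ to the iterated coproduct of $A_{\pm}$ in $\ospq^{\otimes(j-i+1)}$. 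Hence this partial sum factors through the iterated coproduct of $A_{\pm}$ placed at the positions where $G$ sits, and $[\Gamma^q, A_{\pm}] = 0$ in $\ospq$ propagates through the iterated coproduct to yield commutation with $G$.

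The main obstacle is purely combinatorial bookkeeping: verifying that the outer tensor factors of $T_l^{\pm}$ for $l\in[i;j]$ genuinely coincide, and that the middle sum assembles into precisely the iterated coproduct of $A_{\pm}$ on the same positions as the iterated coproduct of $\Gamma^q$ appearing in $\Gamma_{[i;j]}^q$. Once this is in place the argument reduces to the statement that centrality of the Casimir in $\ospq$ is preserved by iterated coproducts, and the commutation with $X_{[n]}^q$ follows by the same reasoning with $A_+$ in place of $A_-$.
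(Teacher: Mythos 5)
Your proof is correct and follows essentially the same route as the paper: reduce to the consecutive intervals $[i;j]$ via Corollary \ref{cor: Generating set}, then exploit the centrality of $\Gamma^q$ in $\ospq$ together with the fact that iterated coproducts are algebra morphisms, and finally translate through the realization (\ref{Realization of ospq}). The only difference is organizational: the paper uses coassociativity to factor a common sequence of extension morphisms out of the commutator, whereas you expand $A_{\pm}^{[n]}$ into simple tensors via (\ref{def: A,K,P higher rank explicit}) and verify commutation group by group --- both verifications ultimately rest on the same observation that $[\Gamma^q,\cdot]=0$ is preserved under iterated coproducts.
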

\begin{proof}
	We will show the relations for sets of the form \(A = [i;j]\). Corollary \ref{cor: Generating set} then implies that the statement also holds for general sets \(A\). Returning to the tensor product approach, we may write
	\begin{align*}
	\Gamma_{[i;j]}^q & = (\underbrace{1\otimes\dots\otimes 1}_{j-2 \ \mathrm{times}}\otimes \Delta\otimes\underbrace{1\otimes\dots\otimes 1}_{n-j\ \mathrm{times}})\dots(\underbrace{1\otimes\dots\otimes 1}_{i-1 \ \mathrm{times}}\otimes \Delta\otimes\underbrace{1\otimes\dots\otimes 1}_{n-j\ \mathrm{times}})\\& \phantom{=\ }(\underbrace{1\otimes\dots\otimes 1}_{i-1 \ \mathrm{times}}\otimes \Gamma^q\otimes\underbrace{1\otimes\dots\otimes 1}_{n-j\ \mathrm{times}}).
	\end{align*}
	Using the coassociativity (\ref{Coassociativity}) of the coproduct we can express \(A_{\pm}^{[n]}\) as
	\begin{align*}
	A_{\pm}^{[n]} & = (\underbrace{1\otimes\dots\otimes 1}_{j-2 \ \mathrm{times}}\otimes \Delta\otimes\underbrace{1\otimes\dots\otimes 1}_{n-j\ \mathrm{times}})\dots(\underbrace{1\otimes\dots\otimes 1}_{i-1 \ \mathrm{times}}\otimes \Delta\otimes\underbrace{1\otimes\dots\otimes 1}_{n-j\ \mathrm{times}})\\& \phantom{=\ }(\underbrace{1\otimes\dots\otimes 1}_{n-j+i-2 \ \mathrm{times}}\otimes \Delta)\dots(1\otimes\Delta)\Delta(A_\pm).
	\end{align*}
	Hence the extension morphisms of the form \(1\otimes\dots\otimes\Delta\otimes\dots\otimes1\) can be moved in front and the remaining commutator
	\[
	\Big[(\underbrace{1\otimes\dots\otimes 1}_{n-j+i-2 \ \mathrm{times}}\otimes \Delta)\dots(1\otimes\Delta)\Delta(A_\pm),(\underbrace{1\otimes\dots\otimes 1}_{i-1 \ \mathrm{times}}\otimes \Gamma^q\otimes\underbrace{1\otimes\dots\otimes 1}_{n-j\ \mathrm{times}}) \Big]
	\]
	trivially vanishes, since \(\Gamma^q\) is central in \(\ospq\). Translating the obtained identity
	\[
	[A_{\pm}^{[n]},\Gamma_{[i;j]}^q] = 0
	\] 
	to the \(\Zn\) \(q\)-Dirac-Dunkl model using (\ref{Realization of ospq}) then yields the anticipated relations.
\end{proof}

These results also apply to smaller sets \([j]\) with \(j<n\).

\begin{corollary}
	\label{cor: D_j^q commutes smaller sets}
	For \(j<n\) and \(A\subseteq [j]\) one has
	\[
	[D_{[j]}^q,\Gamma_A^q] = 0, \qquad [X_{[j]}^q, \Gamma_A^q] = 0.
	\]
\end{corollary}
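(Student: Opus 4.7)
The plan is to reduce this statement to Proposition~\ref{prop: Joint symmetries} applied with $n$ replaced by $j$. The whole point of the corollary is that $\Gamma_A^q$ for $A\subseteq [j]$ is already supported on the first $j$ tensor-product slots, so the \emph{tail} acting on variables $x_{j+1},\dots,x_n$ should decouple from the symmetry statement.

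More precisely, I would first observe from the extension procedure (\ref{def: extension procedure part 1}) that for $A\subseteq [j]$, the expression for $\Gamma_A^q$ carries the unit in every tensor factor of index $>\max(A)\geq j$. Under the realization (\ref{Realization of ospq}), this means $\Gamma_A^q$ involves only the variables $x_1,\dots,x_j$ and the operators $T_{q,k}^{\pm 1/2},r_k$ with $k\leq j$. In particular, $\Gamma_A^q$ commutes with every reflection $r_l$ for $l>j$.

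Next, I would factor the tail of reflections out of $D_{[j]}^q$ and $X_{[j]}^q$. Directly from (\ref{def: R_i,j,k^q}),
\[
R_{[j],k}^q \;=\; \widetilde{R}_{[j],k}^q \cdot \prod_{l=j+1}^{n} r_l,
\]
where $\widetilde{R}_{[j],k}^q$ is the operator obtained from the same defining formula with the final tail product discarded, i.e.\ precisely the $R_{[j],k}^q$ one would write down when working in the $\mathbb{Z}_2^j$ $q$-Dirac-Dunkl model on the first $j$ variables. Consequently,
\[
D_{[j]}^q \;=\; \widetilde{D}_{[j]}^q \cdot \prod_{l=j+1}^{n} r_l, \qquad X_{[j]}^q \;=\; \widetilde{X}_{[j]}^q \cdot \prod_{l=j+1}^{n} r_l,
\]
with $\widetilde{D}_{[j]}^q$ and $\widetilde{X}_{[j]}^q$ the $q$-Dirac-Dunkl and position operators of the rank-$j$ model acting on polynomials in $x_1,\dots,x_j$.

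Finally, Proposition~\ref{prop: Joint symmetries} applied with $n$ replaced by $j$ gives $[\widetilde{D}_{[j]}^q,\Gamma_A^q]=[\widetilde{X}_{[j]}^q,\Gamma_A^q]=0$, viewing $\Gamma_A^q$ as an operator on $\mathcal{P}(\mathbb{R}^n)$ that leaves the variables $x_{j+1},\dots,x_n$ untouched. Combining this with the fact that the tail $\prod_{l=j+1}^n r_l$ commutes with $\Gamma_A^q$, the Leibniz rule for commutators delivers $[D_{[j]}^q,\Gamma_A^q]=[X_{[j]}^q,\Gamma_A^q]=0$. There is no real obstacle here; the only thing to verify carefully is the clean tail factorization of $R_{[j],k}^q$, which is immediate from (\ref{def: R_i,j,k^q}).
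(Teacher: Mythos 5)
Your argument is correct, but it is organized differently from the paper's. The paper keeps everything at rank $n$: it writes $D_{[j]}^q = \bigl(D_{[n]}^q - \sum_{i=j+1}^{n}D_i^qR_{[n],i}^q\bigr)\prod_{m=j+1}^nq^{-\gamma_m/2}T_{q,m}^{-1/2}$ (its identity (\ref{eq: D smaller sets})), invokes Proposition \ref{prop: Joint symmetries} only for the full operator $D_{[n]}^q$, and then checks by hand --- using the explicit form (\ref{eq: Gamma consecutive sets}) of $\Gamma_{[k;\ell]}^q$, in particular that its off-diagonal terms are of the form $x_sD_t^q$ with $s,t\le j$ --- that $\Gamma_{[k;\ell]}^q$ commutes with each leftover factor $D_i^qR_{[n],i}^q$ and $T_{q,m}^{-1/2}$ for $i,m>j$; general $A$ is then handled via Corollary \ref{cor: Generating set}. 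You instead factor the reflection tail out of $R_{[j],k}^q$, identify $D_{[j]}^q=\widetilde{D}_{[j]}^q\prod_{l>j}r_l$ with the rank-$j$ model's operator, and re-apply Proposition \ref{prop: Joint symmetries} with $n$ replaced by $j$. This is legitimate: the proposition and its proof are uniform in the ambient dimension, and the realization of $\Gamma_A^q$ for $A\subseteq[j]$ is indeed the rank-$j$ realization extended trivially, since the extension procedure (\ref{def: extension procedure part 1}) pads with units in all slots beyond $\max(A)$. Your route is arguably cleaner: it needs only this support observation plus commutation with the reflections $r_l$ for $l>j$, avoids inspecting the internal structure of $\Gamma_{[k;\ell]}^q$, and covers all $A\subseteq[j]$ at once without a separate reduction to consecutive sets. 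One slip: you write ``$\max(A)\geq j$'' where you mean $\max(A)\leq j$ (the units occupy all slots of index greater than $\max(A)$, hence in particular all slots of index greater than $j$); this does not affect the argument.
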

\begin{proof}
	We will once more prove the result for sets \(A = [k;\ell]\) with \(\ell\leq j\) and extend it to general sets \(A\) using Corollary \ref{cor: Generating set}. From the definition one sees
	\begin{equation}
	\label{eq: D smaller sets}
	D_{[j]}^q = \left(D_{[n]}^q - \sum_{i=j+1}^{n}D_{i}^qR_{[n],i}^q\right)\prod_{m=j+1}^n q^{-\frac{\gamma_m}{2}}T_{q,m}^{-1/2}.
	\end{equation}
	The operators \(\Gamma_{[k;\ell]}^q\) commute with \(D_{[n]}^q\) by Proposition \ref{prop: Joint symmetries}, with \(D_i^q\) for \(i>j\) since these only act on the corresponding variable \(x_i\), and with all of the other occurring operators by (\ref{eq: Gamma consecutive sets}). Indeed, by (\ref{eq: Gamma consecutive sets}) the operator \(\Gamma_{[k;\ell]}^q\) contains, apart from products of reflections and \(q\)-shift operators, only terms of the form \(x_sD_t^q\), with \(s,t\leq \ell\leq j\). These commute with \(R_{[n],i}^q\) and \(T_{q,m}^{-1/2}\) for \(i,m>j\), since \(R_{[n],i}^q\) contains the product \(T_{q,s}^{-1/2}T_{q,t}^{-1/2}\) and
	\[
	[T_{q,s}^{-1/2}T_{q,t}^{-1/2},x_sD_t^q] = 0.
	\]
	A similar approach works for \(X_{[j]}^q\).
\end{proof}

As a consequence, the higher rank \(q\)-deformed Bannai-Ito algebra may be regarded as an algebra of symmetries of the \(\Zn\) \(q\)-Dirac-Dunkl model. This correspondence will be further investigated in the next paragraphs.

\subsection{$q$-Dunkl monogenics}

Let \(\mathcal{P}_k(\mathbb{R}^n)\) be the space of complex-valued homogeneous polynomials of degree \(k\) in \(n\) real variables. The space \(\mathcal{M}_k^q(\mathbb{R}^n)\) of \(q\)-Dunkl monogenics of degree \(k\) with respect to the reflection group \(\mathbb{Z}_2^n\) is defined as
\begin{equation}
\mathcal{M}_k^q(\mathbb{R}^n) = \mathrm{Ker}(D_{[n]}^q) \cap \mathcal{P}_k(\mathbb{R}^n).
\end{equation}

The space \(\mathcal{P}_k(\mathbb{R}^n)\) allows a direct sum decomposition which can be taken as a \(q\)-Dunkl analog of the classical Fischer decomposition. We will need the following lemma.

\begin{lemma}
	\label{lemma commutation}
	For \(m\in \mathbb{N}\), one has
	\begin{align}
	\begin{split}
	\left[D_{[n]}^q,\left(X_{[n]}^q\right)^{2m}\right] = & \ q^{\gamma_{[n]}-\frac12}\frac{q^{2m}-1}{q-q^{-1}}  \left(X_{[n]}^q\right)^{2m-1}T_{q,[n]} \\
	& - q^{-\gamma_{[n]}+\frac12}\frac{q^{-2m}-1}{q-q^{-1}}\left(X_{[n]}^q\right)^{2m-1}T_{q,[n]}^{-1}.
	\end{split}
	\end{align}
	and
	\begin{align}
	\begin{split}
	\left\{D_{[n]}^q,\left(X_{[n]}^q\right)^{2m+1}\right\} = & \ q^{\gamma_{[n]}-\frac12}\frac{q^{2m+1}+1}{q-q^{-1}}\left(X_{[n]}^q\right)^{2m}T_{q,[n]} \\
	& - q^{-\gamma_{[n]}+\frac12}\frac{q^{-2m-1}+1}{q-q^{-1}}\left(X_{[n]}^q\right)^{2m}T_{q,[n]}^{-1}.
	\end{split}
	\end{align}
\end{lemma}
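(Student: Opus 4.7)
The plan is to recognize that both formulas are abstract algebraic identities in the copy of $\ospq$ generated by $A_{\pm}^{[n]}$ and $K^{[n]}$, rather than statements specifically about $q$-shift operators. Since the realization (\ref{Realization of ospq}) matches $D_{[n]}^q \leftrightarrow A_-^{[n]}$, $X_{[n]}^q \leftrightarrow A_+^{[n]}$ and $q^{\gamma_{[n]}/2}T_{q,[n]}^{1/2}\leftrightarrow K^{[n]}$, and these extended generators satisfy the same $\ospq$-relations (\ref{def: ospq with K}) as noted just after (\ref{def: A,K,P higher rank explicit}), it suffices to prove
\[
[A,B^{2m}] = q^{-1/2}\tfrac{q^{2m}-1}{q-q^{-1}}B^{2m-1}K^2 - q^{1/2}\tfrac{q^{-2m}-1}{q-q^{-1}}B^{2m-1}K^{-2},
\]
\[
\{A,B^{2m+1}\} = q^{-1/2}\tfrac{q^{2m+1}+1}{q-q^{-1}}B^{2m}K^2 - q^{1/2}\tfrac{q^{-2m-1}+1}{q-q^{-1}}B^{2m}K^{-2},
\]
for abstract symbols with $A=A_-$, $B=A_+$ satisfying $KBK^{-1}=q^{1/2}B$, $KAK^{-1}=q^{-1/2}A$ and $\{A,B\}=(K^2-K^{-2})/(q^{1/2}-q^{-1/2})$, since the $q^{\pm\gamma_{[n]}}T_{q,[n]}^{\pm 1}$ in the statement are just $q^{\mp 1/2}K^{\pm 2}$.

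I would carry out a simultaneous induction on $m$. The base case $m=0$ gives $[A,1]=0$ trivially, while $\{A,B\}$ is verified against the stated RHS by the identity $\frac{q^{-1/2}(q+1)K^2-q^{1/2}(q^{-1}+1)K^{-2}}{q-q^{-1}} = \frac{K^2-K^{-2}}{q^{1/2}-q^{-1/2}}$. For the inductive step I use the elementary bracket identities
\[
[A,B^{k+1}] = \{A,B^k\}B - B^k\{A,B\}, \qquad \{A,B^{k+1}\} = [A,B^k]B + B^k\{A,B\},
\]
applied with $k=2m+1$ and $k=2m+2$ respectively, so that the two formulas at rank $m$ immediately yield those at rank $m+1$. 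The moves needed in the computation are only $K^{\pm 2}B = q^{\pm 1}BK^{\pm 2}$ (to push $K^{\pm 2}$ past the $B$ produced by right-multiplication) and substitution of $\{A,B\}=(K^2-K^{-2})/(q^{1/2}-q^{-1/2})$.

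The only nontrivial step is checking that the coefficients combine correctly. For instance, the coefficient of $B^{2m+1}K^2$ in the induced expression for $[A,B^{2m+2}]$ is
\[
q^{1/2}\frac{q^{2m+1}+1}{q-q^{-1}} - \frac{1}{q^{1/2}-q^{-1/2}} = \frac{q^{2m+3/2}+q^{1/2}-(q^{1/2}+q^{-1/2})}{q-q^{-1}} = q^{-1/2}\frac{q^{2m+2}-1}{q-q^{-1}},
\]
matching the target, and an analogous cancellation handles the $K^{-2}$ coefficient and the odd-parity step. The main (but purely technical) obstacle is keeping track of these $q$-arithmetic identities, but no conceptual difficulty arises: the lemma is essentially a $q$-analogue of the standard computation $[\partial,x^n]=nx^{n-1}$ lifted to $\ospq$, and the proof is a clean two-line induction once the abstract reformulation has been made.
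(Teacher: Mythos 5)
Your proof is correct, and it follows the same route the paper indicates (the paper's proof is simply ``a straightforward calculation using induction''): a parity-alternating induction driven by the identities $[A,B^{k+1}]=\{A,B^k\}B-B^k\{A,B\}$ and $\{A,B^{k+1}\}=[A,B^k]B+B^k\{A,B\}$, with the coefficient bookkeeping you display checking out (e.g.\ $q^{1/2}\tfrac{q^{2m+1}+1}{q-q^{-1}}-\tfrac{1}{q^{1/2}-q^{-1/2}}=q^{-1/2}\tfrac{q^{2m+2}-1}{q-q^{-1}}$ is correct, as is the base case via $q^{\pm\gamma_{[n]}}T_{q,[n]}^{\pm1}=q^{\mp1/2}(K^{[n]})^{\pm2}$). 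Your preliminary reduction to an abstract identity in the copy of $\ospq$ generated by $A_\pm^{[n]}$ and $K^{[n]}$ is a clean way to organize the computation and is fully justified by the remark following (\ref{def: A,K,P higher rank explicit}).
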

\begin{proof}
	A straightforward calculation using induction.
\end{proof}

\begin{proposition}
	The space \(\mathcal{P}_k(\mathbb{R}^n)\) has the direct sum decomposition
	\begin{equation}
	\label{eq: Fischer decomposition}
	\mathcal{P}_k(\mathbb{R}^n) = \bigoplus_{i=0}^k \left(X_{[n]}^q\right)^{i}\mathcal{M}_{k-i}^q(\mathbb{R}^n).
	\end{equation}
\end{proposition}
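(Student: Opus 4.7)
I would proceed by induction on $k$. The base case $k = 0$ is immediate: $\mathcal{P}_0(\mathbb{R}^n) = \mathbb{C}$ consists of constants, which are all annihilated by $D_{[n]}^q$, so $\mathcal{P}_0 = \mathcal{M}_0^q$. For the inductive step it suffices to establish the single splitting
\[
\mathcal{P}_k(\mathbb{R}^n) \;=\; \mathcal{M}_k^q(\mathbb{R}^n) \,\oplus\, X_{[n]}^q\,\mathcal{P}_{k-1}(\mathbb{R}^n),
\]
since feeding the inductive hypothesis into the second summand yields (\ref{eq: Fischer decomposition}). That both summands lie in $\mathcal{P}_k$ is immediate: the $q$-shifts $T_{q,i}$ and reflections $r_j$ preserve homogeneous degree, while $x_i$ raises and $D_i^q$ lowers it by one, so that $X_{[n]}^q:\mathcal{P}_{k-1}\to\mathcal{P}_k$ and $D_{[n]}^q:\mathcal{P}_k\to\mathcal{P}_{k-1}$.

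To produce the splitting I would introduce a Fischer-type bilinear pairing on $\mathcal{P}(\mathbb{R}^n)$,
\[
\langle p,\, q\rangle_F \;=\; \bigl[\,p(D_1^q,\dots,D_n^q)\,q(x_1,\dots,x_n)\,\bigr]\bigr|_{x=0},
\]
where $p(D^q)$ denotes the operator obtained from $p$ by replacing each coordinate by the corresponding $q$-Dunkl operator (well-defined as the $D_i^q$ mutually commute). By degree considerations this pairing is orthogonal with respect to the grading $\mathcal{P}(\mathbb{R}^n) = \bigoplus_j \mathcal{P}_j(\mathbb{R}^n)$, and an explicit check on monomials, using the definition (\ref{def: q-Dunkl}) and the hypothesis $|q|\neq 1$ to keep denominators nonzero, shows that $\langle\cdot,\cdot\rangle_F$ is non-degenerate on each $\mathcal{P}_k$. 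The key identity to verify is the adjoint property
\[
\langle X_{[n]}^q p,\, q\rangle_F \;=\; c_k\,\langle p,\, D_{[n]}^q q\rangle_F \qquad (p\in \mathcal{P}_{k-1},\ q\in \mathcal{P}_k),
\]
with $c_k$ a nonzero scalar depending on $k$, $n$, the parameters $\mu_i$ and $q$. This is proved by expanding $X_{[n]}^q$ through (\ref{def: R_n,i^q}), pushing the half-integer $q$-shifts $T_{q,j}^{\pm 1/2}$ and the reflections $r_j$ past $q(x)$ before evaluating at the origin, and applying Lemma \ref{lemma commutation} together with the fact that $T_{q,[n]}$ acts on $\mathcal{P}_k$ as the scalar $q^k$.

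Given non-degeneracy and adjointness, it follows that
\[
\bigl(X_{[n]}^q\,\mathcal{P}_{k-1}\bigr)^{\perp}\cap\mathcal{P}_k \;=\; \mathrm{Ker}\bigl(D_{[n]}^q|_{\mathcal{P}_k}\bigr) \;=\; \mathcal{M}_k^q(\mathbb{R}^n),
\]
and combining this with the non-degeneracy of the pairing on $\mathcal{P}_k$ gives the desired orthogonal direct sum $\mathcal{P}_k = \mathcal{M}_k^q \oplus X_{[n]}^q\mathcal{P}_{k-1}$, completing the induction. The main technical obstacle, as I see it, is the verification of adjointness: unlike in the classical Dunkl setting, the operators $X_{[n]}^q$ and $D_{[n]}^q$ already carry half-integer $q$-shifts and reflections via the factors $R_{[n],i}^q$, and these must be carefully commuted past the polynomial argument before evaluating at $x=0$. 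The hypothesis $|q|\neq 1$ plays an essential role in ensuring that $c_k$ and the scalar prefactors appearing in Lemma \ref{lemma commutation} are nonzero, so that the Fischer pairing remains genuinely non-degenerate and the adjointness identity is meaningful for every $k$.
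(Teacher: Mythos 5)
Your route through a Fischer-type pairing is genuinely different from the paper's proof, which is an explicit recursive construction: the paper decomposes $D_{[n]}^q\psi$ using the inductive hypothesis and then solves $D_{[n]}^q\bigl(X_{[n]}^q\bigr)^{m}\chi_{k+1-m}=\alpha_{k+1,m}\bigl(X_{[n]}^q\bigr)^{m-1}\chi_{k+1-m}$ for the monogenic components, with the nonvanishing of $\alpha_{k+1,m}$ (a consequence of $\vert q\vert\neq 1$ via Lemma \ref{lemma commutation}) giving both existence and uniqueness of the splitting $\mathcal{P}_{k+1}=\mathcal{M}_{k+1}^q\oplus X_{[n]}^q\mathcal{P}_k$. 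Your adjointness claim is essentially correct, and in fact cleaner than you suggest: with the paper's conventions one has $x_i^{\dagger}=D_i^q$, $T_{q,i}^{\dagger}=T_{q,i}$, $r_i^{\dagger}=r_i$, and since $R_{[n],i}^q$ is a product of commuting self-adjoint factors not involving the variable $x_i$, one gets $\bigl(X_{[n]}^q\bigr)^{\dagger}=D_{[n]}^q$ exactly, i.e.\ $c_k=1$. The identification $\bigl(X_{[n]}^q\mathcal{P}_{k-1}\bigr)^{\perp}\cap\mathcal{P}_k=\mathcal{M}_k^q$ then follows from non-degeneracy on $\mathcal{P}_{k-1}$, as you say.

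The genuine gap is in the last step. The proposition is stated for an arbitrary non-zero complex $q$ with $\vert q\vert\neq 1$, and for such $q$ the Fischer pairing is only a non-degenerate \emph{symmetric bilinear form} on $\mathcal{P}_k$, not a positive definite inner product (the paper itself only claims positive definiteness for $q$ real positive, in Section \ref{Section: Action of the symmetry algebra}). For a non-degenerate bilinear form over $\mathbb{C}$, non-degeneracy on the ambient space gives $\dim W+\dim W^{\perp}=\dim\mathcal{P}_k$ but does \emph{not} give $\mathcal{P}_k=W\oplus W^{\perp}$: the subspace $W=X_{[n]}^q\mathcal{P}_{k-1}$ could contain isotropic vectors, so $W\cap W^{\perp}$ need not vanish. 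Concretely, you still have to show that no nonzero element of $X_{[n]}^q\mathcal{P}_{k-1}$ is annihilated by $D_{[n]}^q$; the identity $\langle X_{[n]}^q p, X_{[n]}^q p\rangle_F=\langle p, D_{[n]}^qX_{[n]}^q p\rangle_F=0$ yields $X_{[n]}^q p=0$ only in the positive definite case. To close the gap for general $q$ you can argue as the paper does: expand $p\in\mathcal{P}_{k-1}$ via the inductive hypothesis and use Lemma \ref{lemma commutation} to see that $D_{[n]}^qX_{[n]}^q p=0$ forces each monogenic component of $p$ to vanish, because the scalars $\alpha_{k,m}$ are nonzero. Once that injectivity is in place your duality argument (or indeed the paper's direct construction) finishes the proof; without it, your argument establishes (\ref{eq: Fischer decomposition}) only for positive real $q\neq 1$.
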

\begin{proof}
	We prove this by induction on \(k\). For \(k=0\) this is trivial, since both \(\mathcal{P}_0(\mathbb{R}^n)\) and \(\mathcal{M}_0^q(\mathbb{R}^n)\) are equal to the field of scalars \(\mathbb{C}\).
	
	Suppose the statement has been proven for \(k\), then we prove that \(\mathcal{P}_{k+1}(\mathbb{R}^n) = \mathcal{M}_{k+1}^q(\mathbb{R}^n) \oplus X_{[n]}^q\mathcal{P}_k(\mathbb{R}^n)\). Clearly, both \(\mathcal{M}_{k+1}^q(\mathbb{R}^n)\) and \(X_{[n]}^q\mathcal{P}_k(\mathbb{R}^n)\) are subspaces of \(\mathcal{P}_{k+1}(\mathbb{R}^n)\). Let \(\psi\in \mathcal{P}_{k+1}(\mathbb{R}^n)\) and \(\phi \in \mathcal{P}_k(\mathbb{R}^n)\). Then also \(D_{[n]}^q\psi\in\mathcal{P}_k(\mathbb{R}^n)\), hence by the induction hypothesis \(\phi\) and \(D_{[n]}^q\psi\) can be written in a unique way as
	\begin{align*}
	\begin{split}
	\phi =  & \ \chi_{k} + X_{[n]}^q\chi_{k-1}+\dots+\left(X_{[n]}^q\right)^{k}\chi_0, \\
	D_{[n]}^q\psi = & \ \xi_{k} + X_{[n]}^q\xi_{k-1}+\dots+\left(X_{[n]}^q\right)^{k}\xi_0,
	\end{split}
	\end{align*}
	with \(\chi_i,\xi_i\in \mathcal{M}_i^q(\mathbb{R}^n)\). Obviously \(\psi = (\psi-X_{[n]}^q\phi) + X_{[n]}^q\phi\), with
	\begin{equation}
	\label{requirement psi}
	\begin{array}{ll}
	&\psi-X_{[n]}^q\phi \in \mathcal{M}_{k+1}^q(\mathbb{R}^n) \\ \Leftrightarrow & D_{[n]}^q\psi = D_{[n]}^qX_{[n]}^q\chi_k + D_{[n]}^q\left(X_{[n]}^q\right)^2\chi_{k-1}+\dots+D_{[n]}^q\left(X_{[n]}^q\right)^{k+1}\chi_0.
	\end{array}
	\end{equation}
	By Lemma \ref{lemma commutation} and using the fact that \(D_{[n]}^q\chi_{i}=0\) and \(T_{q,[n]}\chi_{i} = q^{i}\chi_{i}\), we can write
	\[
	D_{[n]}^q\left(X_{[n]}^q\right)^{m}\chi_{k+1-m} = \alpha_{k+1,m}\left(X_{[n]}^q\right)^{m-1}\chi_{k+1-m},
	\]	
	with
	\[
	\alpha_{k+1,m} = q^{\gamma_{[n]}+k-m+\frac12}\frac{q^{m}-(-1)^m}{q-q^{-1}} - q^{-(\gamma_{[n]}+k-m+\frac12)}\frac{q^{-m}-(-1)^m}{q-q^{-1}}.
	\]
	Hence the requirement (\ref{requirement psi}) can only be met by taking 
	\[
	\chi_{m} = \frac{\xi_m}{\alpha_{k+1,k+1-m}}.
	\] 
	Note that \(\alpha_{k+1,k+1-m}\) is always non-zero since \(\vert q\vert \neq 1\).	We conclude that \(\psi\) can be written in a unique way as a sum of an element of \(\mathcal{M}_{k+1}^q(\mathbb{R}^n)\) and one of \(X_{[n]}^q\mathcal{P}_k(\mathbb{R}^n)\), which completes the proof.
\end{proof}

\subsection{The $\mathbf{CK}$-isomorphism}

An explicit isomorphism can be constructed between the space \(\mathcal{P}_k(\mathbb{R}^{j-1})\) of homogeneous polynomials in \(j-1\) variables and the space \(\mathcal{M}_k^q(\mathbb{R}^{j})\) of polynomial null-solutions of degree \(k\) of the \(q\)-Dirac-Dunkl operator \(D_{[j]}^q\).

\begin{proposition}
	\label{prop: CK extension}
	For \(j\in [n], j\geq 2\), the isomorphism \(\mathbf{CK}_{x_j}^{\mu_j}: \mathcal{P}_k(\mathbb{R}^{j-1}) \rightarrow \mathcal{M}_k^q(\mathbb{R}^{j})\) is given by
	\begin{equation}
	\label{eq: CK expression}
	\mathbf{CK}_{x_j}^{\mu_j} = \sum_{\alpha = 0}^{k} \frac{(-1)^{\frac{\alpha(\alpha+1)}{2}}q^{\frac{\alpha}{2}(\gamma_{[j]}+k-1)}}{[\mu_j,\alpha;q][\mu_j,\alpha-1;q]\dots[\mu_j,1;q]}x_j^{\alpha}\left(D_{[j-1]}^q\right)^{\alpha},
	\end{equation}
	with \([\mu_j,m;q] = [\mu_j+m]_q - (-1)^m[\mu_j]_q\).
\end{proposition}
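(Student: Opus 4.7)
The plan is to verify directly that \(D_{[j]}^q\circ \mathbf{CK}_{x_j}^{\mu_j}=0\), and then establish bijectivity by observing that \(\mathbf{CK}_{x_j}^{\mu_j}\) is lower-triangular in the \(x_j\)-expansion and that the annihilation relation can be solved uniquely in reverse. The coefficients in (\ref{eq: CK expression}) are, in fact, forced by a single telescoping recursion, which is the feature I want to exploit.

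First I would split \(D_{[j]}^q\) into the pieces that act on \(x_j\) and on the remaining variables. Using (\ref{def: D_A^q,X_A^q})--(\ref{def: R_i,j,k^q}), one checks
\[
D_{[j]}^q=D_{[j-1]}^q\cdot q^{\gamma_j/2}T_{q,j}^{1/2}r_j\;+\;D_j^q\cdot q^{-\gamma_{[j-1]}/2}T_{q,[j-1]}^{-1/2},
\]
where all reflections on variables \(x_l\) with \(l>j\) act trivially on \(\mathcal{P}(\mathbb{R}^j)\). Then, for \(\phi\in\mathcal{P}_k(\mathbb{R}^{j-1})\), I set \(\phi_\alpha=(D_{[j-1]}^q)^\alpha\phi\in\mathcal{P}_{k-\alpha}(\mathbb{R}^{j-1})\) and apply \(D_{[j]}^q\) to \(x_j^\alpha\phi_\alpha\). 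The two key identities are the scalar action \(D_j^q x_j^\alpha=[\mu_j,\alpha;q]\,x_j^{\alpha-1}\) (immediate from (\ref{def: q-Dunkl}) using \(T_{q,j}x_j^\alpha=q^\alpha x_j^\alpha\) and \(r_jx_j^\alpha=(-1)^\alpha x_j^\alpha\)), and the homogeneity relation \(T_{q,[j-1]}^{-1/2}\phi_\alpha=q^{-(k-\alpha)/2}\phi_\alpha\). Combining them yields
\[
D_{[j]}^q(x_j^\alpha\phi_\alpha)=(-1)^\alpha q^{(\gamma_j+\alpha)/2}x_j^\alpha\phi_{\alpha+1}+q^{-(\gamma_{[j-1]}+k-\alpha)/2}[\mu_j,\alpha;q]\,x_j^{\alpha-1}\phi_\alpha.
\]

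Next, I collect coefficients of \(x_j^\beta\phi_{\beta+1}\) in \(D_{[j]}^q\sum_\alpha c_\alpha x_j^\alpha\phi_\alpha\) and demand cancellation. This produces the single recursion
\[
c_{\beta+1}=\frac{(-1)^{\beta+1}\,q^{(\gamma_{[j]}+k-1)/2}}{[\mu_j,\beta+1;q]}\,c_\beta,
\]
whose telescoping solution with \(c_0=1\) is exactly the coefficient appearing in (\ref{eq: CK expression}), since \(\sum_{m=1}^{\alpha}m=\alpha(\alpha+1)/2\) and \(\gamma_{[j-1]}+\gamma_j=\gamma_{[j]}\). The two boundary terms of the sum cause no trouble: the contribution at \(\alpha=0\) of the second summand vanishes because \([\mu_j,0;q]=[\mu_j]_q-[\mu_j]_q=0\), and the contribution at \(\alpha=k\) of the first summand vanishes because \(\phi_{k+1}=(D_{[j-1]}^q)^{k+1}\phi=0\) by degree. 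Hence \(\mathbf{CK}_{x_j}^{\mu_j}\phi\in\mathcal{M}_k^q(\mathbb{R}^j)\).

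It remains to prove bijectivity. Injectivity is clear: the \(\alpha=0\) term of \(\mathbf{CK}_{x_j}^{\mu_j}\phi\) is \(\phi\) itself, independent of \(x_j\), so \(\mathbf{CK}_{x_j}^{\mu_j}\phi=0\) forces \(\phi=0\). For surjectivity, take \(\psi\in\mathcal{M}_k^q(\mathbb{R}^j)\), expand \(\psi=\sum_{\alpha=0}^{k}x_j^\alpha\psi_\alpha\) with \(\psi_\alpha\in\mathcal{P}_{k-\alpha}(\mathbb{R}^{j-1})\), and apply the same decomposition of \(D_{[j]}^q\). The vanishing of the coefficient of each \(x_j^\beta\) in \(D_{[j]}^q\psi\) yields exactly the recursion \(\psi_{\beta+1}=\tfrac{(-1)^{\beta+1}q^{(\gamma_{[j]}+k-1)/2}}{[\mu_j,\beta+1;q]}\,D_{[j-1]}^q\psi_\beta\), which determines every \(\psi_\alpha\) from \(\psi_0\) and gives \(\psi=\mathbf{CK}_{x_j}^{\mu_j}\psi_0\). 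The only subtlety I anticipate is bookkeeping of the half-integer powers of \(q\) coming from \(T_{q,j}^{1/2}\) and \(T_{q,[j-1]}^{-1/2}\); once homogeneity is used to convert \(T_{q,[j-1]}^{-1/2}\) into the scalar \(q^{-(k-\alpha)/2}\), the exponents collapse cleanly to \((\gamma_{[j]}+k-1)/2\), which is precisely the exponent in (\ref{eq: CK expression}).
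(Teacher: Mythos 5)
Your proposal is correct and follows essentially the same route as the paper's own proof: split \(D_{[j]}^q\) into its \(x_j\)-part and \(D_{[j-1]}^q\)-part, compute the action on \(x_j^{\alpha}p_{\alpha}\), and solve the resulting two-term recursion for the coefficients, with invertibility coming from evaluation at \(x_j=0\). The only (cosmetic) differences are that you carry the reflection \(r_j\) explicitly rather than leaving it inside \(D_{[j-1]}^q\), and that you omit the paper's remark that each \([\mu_j,m;q]\) is non-zero because \(\vert q\vert\neq 1\), which is needed to divide in the recursion.
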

\begin{proof}
	Let \(p\in\mathcal{P}_k(\mathbb{R}^{j-1})\). Anticipating the fact that \(\mathbf{CK}_{x_j}^{\mu_j}p\) should be an element of \(\mathcal{P}_k(\mathbb{R}^j) \), we can write
	\begin{equation}
	\label{eq: CK acting on p}
	\mathbf{CK}_{x_j}^{\mu_j}p(x_1,\dots,x_{j-1}) = \sum\limits_{\alpha = 0}^{k}x_j^{\alpha}p_{\alpha}(x_1,\dots,x_{j-1})
	\end{equation}
	for certain polynomials \(p_{\alpha} \in \mathcal{P}_{k-\alpha}(\mathbb{R}^{j-1}) \). We will now investigate the requirements imposed on the polynomials \(p_{\alpha}\) by the constraint that \(D_{[j]}^q\mathbf{CK}_{x_j}^{\mu_j}p = 0\). We will obtain expressions for \(p_{\alpha} \) in terms of \(p\), which then completely define \(\mathbf{CK}_{x_j}^{\mu_j}\).
	
	First observe that like in (\ref{eq: D smaller sets}) we have
	\begin{align*}
	D_{[j]}^q & = \sum_{i=0}^{j-1}D_i^qR_{[j],i}^q + D_j^qR_{[j],j}^q \\
	& = D_{[j-1]}^q\left(q^{\frac{\gamma_j}{2}}T_{q,j}^{1/2}\right) + D_j^qR_{[j],j}^q.
	\end{align*}
	From (\ref{def: q-Dunkl}) and (\ref{def: R_n,i^q}) one sees that
	\begin{align*}
	\begin{split}
	D_j^qR_{[j],j}^q(x_j^{\alpha}p_{\alpha}) & = q^{-\frac12(\gamma_{[j-1]}+k-\alpha)}\left(\frac{q^{\mu_j+\alpha}}{q-q^{-1}} - \frac{q^{-\mu_j-\alpha}}{q-q^{-1}}-(-1)^{\alpha}[\mu_j]_q \right)x_j^{\alpha-1}p_{\alpha} \\
	& = q^{-\frac12(\gamma_{[j-1]}+k-\alpha)}[\mu_j,\alpha;q]x_j^{\alpha-1}p_{\alpha},
	\end{split}
	\end{align*}
	where we have used the fact that \(p_{\alpha}\) is homogeneous of degree \(k-\alpha\). This results in the expression
	\begin{align*}
	\begin{split}
	D_{[j]}^q\mathbf{CK}_{x_j}^{\mu_j}p & = \sum_{\alpha=0}^{k} (-1)^{\alpha}q^{\frac{\alpha+\gamma_j}{2}}x_j^{\alpha}D_{[j-1]}^qp_{\alpha} + \sum_{\alpha = 1}^{k}q^{-\frac12(\gamma_{[j-1]}+k-\alpha)}[\mu_j,\alpha;q]x_j^{\alpha-1}p_{\alpha} \\
	& = \sum_{\alpha = 0}^{k} (-1)^{\alpha}q^{\frac{\alpha+\gamma_j}{2}}x_j^{\alpha}D_{[j-1]}^qp_{\alpha} + \sum_{\alpha = 0}^{k-1} q^{-\frac12(\gamma_{[j-1]}+k-\alpha-1)}[\mu_j,\alpha+1;q]x_j^{\alpha}p_{\alpha+1}.
	\end{split}
	\end{align*}
	As this sum must yield zero, we obtain by putting together all terms with an equal exponent of \(x_j\)
	\[
	p_{\alpha+1} = \frac{(-1)^{\alpha+1}q^{\frac12(\gamma_{[j]}+k-1)}}{[\mu_j,\alpha+1;q]}D_{[j-1]}^qp_{\alpha},
	\]
	for \(\alpha \in \{0,\dots,k-1\}\) and \(p_0 = p\), which follows from (\ref{eq: CK acting on p}) when evaluated in \(x_j = 0\). Solving this recursive relation yields
	\begin{equation}
	\label{eq: Recursion relation solved}
	p_{\alpha} = \frac{(-1)^{\frac{\alpha(\alpha+1)}{2}}q^{\frac{\alpha}{2}(\gamma_{[j]}+k-1)}}{[\mu_j,\alpha;q][\mu_j,\alpha-1;q]\dots[\mu_j,1;q]}\left(D_{[j-1]}^q\right)^{\alpha}p,
	\end{equation}
	hence (\ref{eq: CK expression}) follows. Note that \([\mu_j,i;q]\) is always non-zero since \(\vert q\vert \neq 1\).
	
	The constructed mapping is bijective and its inverse is given by evaluation at \(x_j=0\). Hence \(\mathbf{CK}_{x_j}^{\mu_j}\) is an isomorphism between \(\mathcal{P}_k(\mathbb{R}^{j-1})\) and \(\mathcal{M}_k^q(\mathbb{R}^{j})\).
\end{proof}

\begin{remark}
	In the limit \(q\rightarrow 1\) this reduces to the \(\mathbf{CK}\)-extension for the scalar \(\mathbb{Z}_2^n\) Dirac-Dunkl model, as defined in \cite[Section 5]{DBAdv}. This limit can be made explicit:
	\[
	\lim_{q\rightarrow 1}\left(\mathbf{CK}_{x_j}^{\mu_j}\right) = \Gamma(\mu_j+\tfrac12)\left(\widetilde{I}_{\mu_j-\frac12}(x_jD_{[j-1]})-\frac{x_jD_{[j-1]}}{2}\widetilde{I}_{\mu_j+\frac12}(x_jD_{[j-1]}) \right),
	\]
	where \(D_{[j-1]}\) is the \(q = 1\) scalar \(\Zn\) Dirac-Dunkl operator (\ref{def: Dirac-Dunkl model q = 1}), \(\Gamma\) is the Gamma function and \(\widetilde{I}_{\alpha}(x) = \left(\frac{2}{x}\right)^{\alpha}I_{\alpha}(x)\), with \(I_{\alpha}\) the modified Bessel function \cite{Special Functions}.
\end{remark}

We can now combine the \(\mathbf{CK}\)-extensions (\ref{eq: CK expression}) and the Fischer decomposition (\ref{eq: Fischer decomposition}) to obtain the following tower:
\begin{align}
\begin{split}
\label{eq: CK tower}
\mathcal{M}_k^q(\mathbb{R}^{n}) & = \mathbf{CK}_{x_n}^{\mu_n}\mathcal{P}_k(\mathbb{R}^{n-1})=\mathbf{CK}_{x_n}^{\mu_n}\left[\bigoplus\limits_{i=0}^{k}\left(X_{[n-1]}^q\right)^{k-i}\mathcal{M}_{i}^q(\mathbb{R}^{n-1}) \right] \\ &= \mathbf{CK}_{x_n}^{\mu_n}\left[\bigoplus\limits_{i=0}^{k}\left(X_{[n-1]}^q\right)^{k-i} \mathbf{CK}_{x_{n-1}}^{\mu_{n-1}} \mathcal{P}_i(\mathbb{R}^{n-2}) \right] \\ 
&= \mathbf{CK}_{x_n}^{\mu_n}\left[\bigoplus\limits_{i=0}^{k}\left(X_{[n-1]}^q\right)^{k-i}\mathbf{CK}_{x_{n-1}}^{\mu_{n-1}}\left[\bigoplus\limits_{j=0}^{i}\left(X_{[n-2]}^q\right)^{i-j}\mathcal{M}_{j}^q(\mathbb{R}^{n-2}) \right] \right] = \dots 
\end{split}
\end{align}
and so on, until one reaches \(\mathcal{P}_{m}(\mathbb{R})\), which is spanned by the function \(x_1^{m} \). This motivates the construction of the following basis functions for \(\mathcal{M}_k^q(\mathbb{R}^n)\).

\begin{definition}
	\label{def: Basis for M_k}
	Let \(\mathbf{j}=(j_1,j_2,\dots,j_{n-1})\in\mathbb{N}^{n-1} \), then we define the functions \(\psi_{\mathbf{j}} \) as:
	\begin{equation}
	\label{eq: Basis for M_k}
	\psi_{\mathbf{j}}(x_1,x_2,\dots,x_n) = \mathbf{CK}_{x_{n}}^{\mu_{n}}\left[\left(X_{[n-1]}^q\right)^{j_{n-1}}\mathbf{CK}_{x_{n-1}}^{\mu_{n-1}}\left[\dots \left[\left(X_{[2]}^q\right)^{j_{2}}\mathbf{CK}_{x_2}^{\mu_2}(x_1^{j_1})\right]\dots\right]\right].
	\end{equation}
	Then by (\ref{eq: CK tower}) the set \(\left\{\psi_{\mathbf{j}}: \mathbf{j}=(j_1,j_2,\dots,j_{n-1}) \in \mathbb{N}^{n-1} \ \mathrm{such\  that}\  \sum_{i = 1}^{n-1}j_i = k \right\}\) forms a basis for \(\mathcal{M}_k^q(\mathbb{R}^{n})\).
\end{definition}

\section{Action of the symmetry algebra $\mathcal{A}_n^q $}
\label{Section: Action of the symmetry algebra}
The operators \(\Gamma_A^q\) have a well-defined action on the space \(\mathcal{M}_k^q(\mathbb{R}^n)\) of \(q\)-Dunkl monogenics. To see this, first observe from the expression (\ref{eq: Gamma consecutive sets}) that each operator \(\Gamma_{[i;j]}^q\) preserves the degree of the homogeneous polynomial it acts on. This in fact holds for all operators \(\Gamma_A^q\), as they are generated by the operators \(\Gamma_{[i;j]}^q\) by Corollary \ref{cor: Generating set}. Moreover, if a polynomial \(\psi\) is a null-solution of the \(\Zn\) \(q\)-Dirac-Dunkl operator \(D_{[n]}^q\), then so is \(\Gamma_A^q\psi\), by Proposition \ref{prop: Joint symmetries}. In this section we study the action of the symmetry algebra \(\mathcal{A}_n^q\), generated by the operators \(\Gamma_A^q\), on the space \(\mathcal{M}_k^q(\mathbb{R}^n)\), and prove its irreducibility. 

\subsection{Spherical $q$-Dirac-Dunkl equation}
In this subsection we show that the operators \(\Gamma_{[\ell]}^q\) act diagonally on the basis for \(\mathcal{M}_k^q(\mathbb{R}^n)\) constructed in Definition \ref{def: Basis for M_k}. We start by considering the case \(\ell = n\).

\begin{lemma}
	\label{lemma first eigenvalue equation}
	Any \(\Psi_k \in \mathcal{M}_k^q(\mathbb{R}^n)\) is an eigenfunction of the operator \(\Gamma_{[n]}^q\):
	\begin{equation}
	\Gamma_{[n]}^q\Psi_k = (-1)^k \left[k+\gamma_{[n]}-\frac12\right]_q\Psi_k.
	\end{equation}
\end{lemma}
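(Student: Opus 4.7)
The proof is a direct computation using the explicit formula (\ref{eq: Gamma in terms of other elements}) translated to the $\Zn$ $q$-Dirac-Dunkl realization via (\ref{Realization of ospq}). Under this realization, (\ref{eq: Gamma in terms of other elements}) reads
\[
\Gamma_{[n]}^q = \left(-X_{[n]}^q D_{[n]}^q + \frac{q^{-1/2}\bigl(q^{\gamma_{[n]}/2}T_{q,[n]}^{1/2}\bigr)^{2} - q^{1/2}\bigl(q^{\gamma_{[n]}/2}T_{q,[n]}^{1/2}\bigr)^{-2}}{q-q^{-1}}\right)\prod_{i\in[n]}r_i.
\]

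The plan is as follows. First, since $\Psi_k\in\mathcal{M}_k^q(\mathbb{R}^n)$ lies in the kernel of $D_{[n]}^q$ by definition, the term $X_{[n]}^q D_{[n]}^q$ annihilates $\Psi_k$, and only the sCasimir part together with $P^{[n]}$ contributes. Second, the total $q$-shift operator $T_{q,[n]}=\prod_{i\in[n]}T_{q,i}$ simultaneously scales every variable by $q$, so on any homogeneous polynomial of degree $k$ it acts as multiplication by $q^k$. Therefore
\[
\left(q^{\gamma_{[n]}/2}T_{q,[n]}^{1/2}\right)^{\pm 2}\Psi_k = q^{\pm(\gamma_{[n]}+k)}\Psi_k,
\]
and the sCasimir bracket becomes
\[
\frac{q^{\gamma_{[n]}+k-1/2}-q^{-(\gamma_{[n]}+k-1/2)}}{q-q^{-1}}\,\Psi_k = \left[k+\gamma_{[n]}-\tfrac12\right]_q\Psi_k.
\]

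Third, the operator $P^{[n]}=\prod_{i\in[n]}r_i$ is the total reflection sending $(x_1,\dots,x_n)\mapsto(-x_1,\dots,-x_n)$, so on any homogeneous polynomial of degree $k$ it acts by the scalar $(-1)^k$. Combining the two scalar actions gives precisely $\Gamma_{[n]}^q\Psi_k = (-1)^k[k+\gamma_{[n]}-\tfrac12]_q\,\Psi_k$, as claimed.

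There is no real obstacle: the only minor point to check is the identification of $\Gamma_{[n]}^q$ with the operator on the right-hand side of (\ref{eq: Gamma in terms of other elements}) under the realization (\ref{Realization of ospq}), but this has already been recorded immediately after that equation. The remaining steps are routine verifications of homogeneity and parity.
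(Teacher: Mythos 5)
Your proof is correct and follows essentially the same route as the paper: both invoke the realization of (\ref{eq: Gamma in terms of other elements}) for $[i;j]=[1;n]$ in the $\Zn$ $q$-Dirac-Dunkl model, kill the $X_{[n]}^qD_{[n]}^q$ term using $D_{[n]}^q\Psi_k=0$, and read off the eigenvalue from homogeneity (the $q$-shift giving $q^k$ and the total reflection giving $(-1)^k$). No gaps.
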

\begin{proof}
	In the \(\Zn\) \(q\)-Dirac-Dunkl model the identity (\ref{eq: Gamma in terms of other elements}) with \(i = 1\) and \(j = n\) becomes
	\[
	\Gamma_{[n]}^q = \left(-X_{[n]}^qD_{[n]}^q+\frac{q^{\gamma_{[n]}-\frac12}}{q-q^{-1}}T_{q,[n]}-\frac{q^{-\gamma_{[n]}+\frac12}}{q-q^{-1}}T_{q,[n]}^{-1}\right)\prod_{i=1}^nr_i.
	\]
	The anticipated expression now follows immediately from the fact that \(\Psi_k\) is homogeneous of degree \(k\) and a null-solution of \(D_{[n]}^q\). 
\end{proof}

In order to generalize this result to the operators \(\Gamma_{[\ell]}^q\) for \(\ell < n\) we will need a lemma characterizing how these commute with the \(\mathbf{CK}\)-isomorphisms.

\begin{lemma}
	\label{lemma CK commutes}
	For \(\ell<j\leq n\), one has
	\begin{equation}
	[\Gamma_{[\ell]}^q,\mathbf{CK}_{x_j}^{\mu_j}] = 0.
	\end{equation}
\end{lemma}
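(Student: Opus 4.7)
The plan is to exploit the explicit series expansion of $\mathbf{CK}_{x_j}^{\mu_j}$ given in Proposition \ref{prop: CK extension}. Since $\Gamma_{[\ell]}^q$ preserves the degree of any homogeneous polynomial, the scalar coefficients $c_\alpha := (-1)^{\alpha(\alpha+1)/2} q^{\alpha(\gamma_{[j]}+k-1)/2}/\prod_{i=1}^{\alpha}[\mu_j,i;q]$ appearing in (\ref{eq: CK expression}) remain the same when we push $\Gamma_{[\ell]}^q$ through the sum on $\mathcal{P}_k(\mathbb{R}^{j-1})$. It therefore suffices to establish the two pointwise commutation relations
\[
[\Gamma_{[\ell]}^q, x_j] = 0 \qquad \text{and} \qquad [\Gamma_{[\ell]}^q, D_{[j-1]}^q] = 0,
\]
after which every summand $x_j^{\alpha}(D_{[j-1]}^q)^{\alpha}$ commutes with $\Gamma_{[\ell]}^q$ term by term and the identity on $\mathcal{P}_k(\mathbb{R}^{j-1})$ follows.

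The second relation is immediate from Corollary \ref{cor: D_j^q commutes smaller sets}: since $\ell \leq j-1 \leq n-1 < n$ and $[\ell] \subseteq [j-1]$, that corollary yields $[D_{[j-1]}^q, \Gamma_{[\ell]}^q] = 0$. For the first relation I would invoke the explicit formula (\ref{eq: Gamma consecutive sets}) for $\Gamma_{[\ell]}^q$. This requires some care, because the operators $R_{[\ell],k}^q$ defined in (\ref{def: R_i,j,k^q}) carry a tail $\prod_{m=\ell+1}^{n} r_m$ of reflections, and $r_j$ anticommutes with $x_j$ whenever $j > \ell$. The crucial observation is that in the expansion (\ref{eq: Gamma consecutive sets}) these tails occur only in the combinations $(R_{[\ell],k}^q)^2$ and $R_{[\ell],k}^q R_{[\ell],l}^q$ (inside the $M_{kl}$), so the offending reflections cancel pairwise via $r_m^2 = 1$. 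What remains is built exclusively from $x_1,\dots,x_\ell$, the $q$-shift operators $T_{q,m}^{\pm 1}$ with $m \leq \ell$, the reflections $r_m$ with $m \leq \ell$, and the $q$-Dunkl operators $D_m^q$ with $m \leq \ell$, and each of these manifestly commutes with $x_j$ for $j > \ell$.

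The main (really the only) point requiring attention is this reflection-cancellation bookkeeping, which I would carry out by substituting (\ref{def: R_i,j,k^q}) into (\ref{eq: Gamma consecutive sets}) and tracking the $r_m$ factors with $m > \ell$. Once both commutations are in hand, the conclusion is a one-line computation: for any $p \in \mathcal{P}_k(\mathbb{R}^{j-1})$,
\[
\Gamma_{[\ell]}^q \, \mathbf{CK}_{x_j}^{\mu_j} p \;=\; \sum_{\alpha=0}^{k} c_\alpha \, x_j^{\alpha} (D_{[j-1]}^q)^{\alpha} \, \Gamma_{[\ell]}^q p \;=\; \mathbf{CK}_{x_j}^{\mu_j} \, \Gamma_{[\ell]}^q p,
\]
which is the desired commutativity.
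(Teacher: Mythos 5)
Your proof is correct and follows essentially the same route as the paper: it reduces the claim to the two commutation relations $[\Gamma_{[\ell]}^q,x_j]=0$ (via the pairwise cancellation of the reflections $r_m$ with $m>\ell$ in the products $R_{[\ell],i}^qR_{[\ell],k}^q$) and $[\Gamma_{[\ell]}^q,D_{[j-1]}^q]=0$ (via Corollary \ref{cor: D_j^q commutes smaller sets}). Your additional remark that the $k$-dependent coefficients in (\ref{eq: CK expression}) are unaffected because $\Gamma_{[\ell]}^q$ preserves the degree is a welcome point of rigor that the paper leaves implicit.
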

\begin{proof}
	First observe that the definition (\ref{eq: CK expression}) of \(\mathbf{CK}_{x_j}^{\mu_j}\) only involves powers of \(x_j\) and \(D_{[j-1]}^q\). Since \(j>\ell\), the only operators in the expression (\ref{eq: Gamma consecutive sets}) for \(\Gamma_{[\ell]}^q\) that act on \(x_j\) are of the form \(R_{[\ell],i}^qR_{[\ell],k}^q\) with \(i,k\leq \ell\). These will change the sign of \(x_j\) twice, thus leaving it unaltered, which ensures that \(\Gamma_{[\ell]}^q\) commutes with \(x_j\). The fact that \([\Gamma_{[\ell]}^q,D_{[j-1]}^q] = 0\) follows from Corollary \ref{cor: D_j^q commutes smaller sets}. The statement follows.
\end{proof}

This observation now allows us to introduce the spherical \(q\)-Dirac-Dunkl equation. For \(\mathbf{j} = (j_1,j_2,\dots,j_{n-1})\in\mathbb{N}^{n-1}\) and \(\ell<n\), we will denote by \(\mathbf{j}_{\ell}\) the truncated vector \((j_1,\dots,j_{\ell})\), and by \(\vert \mathbf{j}_{\ell}\vert\) the sum \(j_1+\dots+j_{\ell}\).

\begin{proposition}
	\label{prop: Spherical q-Dirac-Dunkl equation}
	Let \(\ell\in [n]\). The basis functions \(\psi_{\mathbf{j}}\) of \(\mathcal{M}_k^q(\mathbb{R}^n)\), introduced in Definition \ref{def: Basis for M_k}, satisfy the spherical \(q\)-Dirac-Dunkl equation 
	\begin{equation}
	\Gamma_{[\ell]}^q\psi_{\mathbf{j}} = \lambda_{\ell}(\mathbf{j})\psi_{\mathbf{j}},
	\end{equation}
	where the eigenvalue is given by
	\begin{equation}
	\label{def: eigenvalue}
	\lambda_{\ell}(\mathbf{j}) = (-1)^{\vert \mathbf{j}_{\ell-1}\vert} \left[\vert \mathbf{j}_{\ell-1}\vert + \gamma_{[\ell]} - \frac12 \right]_q.
	\end{equation}
\end{proposition}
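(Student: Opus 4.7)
The plan is to reduce the statement to Lemma \ref{lemma first eigenvalue equation} by commuting $\Gamma_{[\ell]}^q$ through the iterated $\mathbf{CK}$-extensions and position operators that define $\psi_{\mathbf{j}}$. The case $\ell = n$ is immediate: $\psi_{\mathbf{j}} \in \mathcal{M}_k^q(\mathbb{R}^n)$ with $k = |\mathbf{j}_{n-1}|$, so Lemma \ref{lemma first eigenvalue equation} directly yields $\Gamma_{[n]}^q \psi_{\mathbf{j}} = (-1)^k [k+\gamma_{[n]}-1/2]_q \psi_{\mathbf{j}} = \lambda_n(\mathbf{j})\psi_{\mathbf{j}}$.

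For $\ell < n$, I would introduce the intermediate functions $\phi_1 = x_1^{j_1}$ and $\phi_m = \mathbf{CK}_{x_m}^{\mu_m}\bigl[(X_{[m-1]}^q)^{j_{m-1}}\phi_{m-1}\bigr]$ for $2 \leq m \leq n$, so that $\phi_n = \psi_{\mathbf{j}}$. By the tower (\ref{eq: CK tower}), each $\phi_m$ lies in $\mathcal{M}_{|\mathbf{j}_{m-1}|}^q(\mathbb{R}^m)$. The plan is then to prove by induction on $m \in \{\ell, \ell+1, \ldots, n\}$ that $\Gamma_{[\ell]}^q\phi_m = \lambda_\ell(\mathbf{j})\phi_m$, and to specialise at $m = n$ to recover the claim.

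For the base case $m = \ell$, I would apply Lemma \ref{lemma first eigenvalue equation} in the $\ell$-variable setting to $\phi_\ell$, which is a $q$-Dunkl monogenic of degree $|\mathbf{j}_{\ell-1}|$ with respect to $D_{[\ell]}^q$; this immediately produces the eigenvalue $\lambda_\ell(\mathbf{j})$. For the inductive step $m > \ell$, I would first invoke Lemma \ref{lemma CK commutes} (applicable since $\ell < m$) to pull $\Gamma_{[\ell]}^q$ inside $\mathbf{CK}_{x_m}^{\mu_m}$, and then Corollary \ref{cor: D_j^q commutes smaller sets} (applicable since $[\ell] \subseteq [m-1]$ and $m-1 < n$) to pull it further past $(X_{[m-1]}^q)^{j_{m-1}}$; the inductive hypothesis applied to $\phi_{m-1}$ closes the step.

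No serious obstacle is expected: the two commutation results above are already in hand, and Lemma \ref{lemma first eigenvalue equation} supplies the only nontrivial eigenvalue computation. The one sub-case worth explicitly verifying is $m-1 = \ell$, where the inductive step requires $[X_{[\ell]}^q, \Gamma_{[\ell]}^q] = 0$; this is covered by Corollary \ref{cor: D_j^q commutes smaller sets} with $A = [\ell]$ and $j = \ell < n$, and ultimately reflects the centrality of the Casimir element of $\ospq$.
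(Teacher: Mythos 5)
Your proposal is correct and follows essentially the same route as the paper: both reduce to Lemma \ref{lemma first eigenvalue equation} applied in the $\ell$-variable setting by commuting $\Gamma_{[\ell]}^q$ through the outer $\mathbf{CK}$-extensions and position operators via Lemma \ref{lemma CK commutes} and Corollary \ref{cor: D_j^q commutes smaller sets}. Your formal induction on $m$ is merely a more explicit packaging of the paper's single-pass commutation argument, and your attention to the sub-case $m-1=\ell$ matches the paper's use of $j\geq\ell$ in the commutation with $X_{[j]}^q$.
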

\begin{proof}
	For \(\ell = n\) this is exactly the result of Lemma \ref{lemma first eigenvalue equation}. For \(\ell<n\), Lemma \ref{lemma CK commutes} and Corollary \ref{cor: D_j^q commutes smaller sets} tell us that \(\Gamma_{[\ell]}^q\) commutes with \(\mathbf{CK}_{x_j}^{\mu_j}\) for \(j>\ell\) and with \(X_{[j]}^m\) for \(j\geq \ell\) and arbitrary \(m\in\mathbb{N}\). As a consequence, we have
	\begin{align*}
	\begin{split}
	\Gamma_{[\ell]}^q\psi_{\mathbf{j}}  & = \Gamma_{[\ell]}^q \mathbf{CK}_{x_{n}}^{\mu_{n}}\left[\left(X_{[n-1]}^q\right)^{j_{n-1}}\mathbf{CK}_{x_{n-1}}^{\mu_{n-1}}\left[\dots \mathbf{CK}_{x_3}^{\mu_3}\left[\left(X_{[2]}^q\right)^{j_{2}}\mathbf{CK}_{x_2}^{\mu_2}(x_1^{j_1})\right]\dots\right]\right] \\
	& = \mathbf{CK}_{x_{n}}^{\mu_{n}}\left[\dots \left(X_{[\ell]}^q\right)^{j_{\ell}}\left[\Gamma_{[\ell]}^q\mathbf{CK}_{x_{\ell}}^{\mu_{\ell}}\left[\dots\mathbf{CK}_{x_3}^{\mu_3}\left[\left(X_{[2]}^q\right)^{j_2}\mathbf{CK}_{x_2}^{\mu_2}(x_1^{j_1}) \right]\dots\right]\right]\right].
	\end{split}
	\end{align*}
	As \(\mathbf{CK}_{x_{\ell}}^{\mu_{\ell}}\left[\left(X_{[\ell-1]}^q\right)^{j_{\ell-1}}\dots\mathbf{CK}_{x_2}^{\mu_2}(x_1^{j_1}) \right] \in \mathcal{M}_{\vert\mathbf{j}_{\ell-1}\vert}(\mathbb{R}^{\ell})\), we may conclude from Lemma \ref{lemma first eigenvalue equation} that 
	\[
	\Gamma_{[\ell]}^q\mathbf{CK}_{x_{\ell}}^{\mu_{\ell}}\left[\left(X_{[\ell-1]}^q\right)^{j_{\ell-1}}\dots\mathbf{CK}_{x_2}^{\mu_2}(x_1^{j_1}) \right] = \lambda_{\ell}(\mathbf{j})\mathbf{CK}_{x_{\ell}}^{\mu_{\ell}}\left[\left(X_{[\ell-1]}^q\right)^{j_{l-1}}\dots\mathbf{CK}_{x_2}^{\mu_2}(x_1^{j_1}) \right],
	\]
	so the result follows.
\end{proof}

We may conclude that the operators \(\Gamma_{[\ell]}^q\) with \(\ell\in[n]\) generate an abelian subalgebra of \(\mathcal{A}_n^q\) which is diagonalized by the considered basis. Note that the set of eigenvalues \(\{\lambda_{\ell}(\j): \ell\in \{2,3,\dots,n-1\}\}\) uniquely determines the vector \(\j\) and hence also the basis function \(\psi_{\j}\).

\subsection{Self-adjoint operators}
We can endow the space of polynomials with the following inner product. For \(P, Q\in\mathcal{P}(\mathbb{R}^n)\) we write
\begin{equation}
	\label{def: Inner product}
	\langle P, Q\rangle = \big(\overline{P(D_1^q,\dots,D_n^q)}Q(x_1,\dots,x_n)\big)\big\vert_{\mathbf{x} = 0}.
\end{equation}
One easily verifies that this defines an inner product under the condition that \(q\) be a positive real number different from 1. This is in fact the \(q\)-analog of the Dunkl-Fischer inner product, as introduced in \cite[Chapter 5.2]{DX}. For two monomials \(\mathbf{x}^{\mathbf{\alpha}},\mathbf{x}^{\mathbf{\beta}}\), with \(\mathbf{\alpha},\mathbf{\beta}\in\mathbb{N}^n\), one obtains
\[
\langle \mathbf{x}^{\mathbf{\alpha}},\mathbf{x}^{\mathbf{\beta}}\rangle = \delta_{\mathbf{\alpha},\mathbf{\beta}}\left(\prod_{i=1}^n\prod_{j=1}^{\alpha_i}[\mu_i,j;q]\right),
\]
with \([\mu_i,j;q]\) as defined in Proposition \ref{prop: CK extension}. Note that each \([\mu_i,j;q]\) is positive, since both \(\mu_i\) and \(q\) are positive as well. The adjoints of the operators \(x_i, D_i^q, T_{q,i}\) and \(r_i\) with respect to this inner product are readily checked to be
\begin{equation}
	\label{eq: Adjoints}
	x_i^\dagger = D_i^q, \quad {D_i^q}^{\dagger} = x_i, \quad T_{q,i}^{\dagger} = T_{q,i}, \quad r_i^{\dagger} = r_i.
\end{equation}
This enables us to compute the adjoints of the operators \(\Gamma_A^q\).

\begin{lemma}
	\label{lemma: Self-adjoint}
	The operators \(\Gamma_{[i;j]}^q\) with \(i\leq j\) are self-adjoint.
\end{lemma}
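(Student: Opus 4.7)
The plan is to reduce the self-adjointness of $\Gamma_{[i;j]}^q$ to the adjointness properties of the basic building blocks $X_{[i;j]}^q$, $D_{[i;j]}^q$, $K^{[i;j]}$ and $P^{[i;j]}$, and then invoke the closed-form expression (\ref{eq: Gamma in terms of other elements}). Concretely, using the realization (\ref{Realization of ospq}), we have $K^{[i;j]} = q^{\gamma_{[i;j]}/2}T_{q,[i;j]}^{1/2}$ and $P^{[i;j]} = \prod_{k=i}^{j} r_k$, and these are manifestly self-adjoint since they are products of mutually commuting factors that are self-adjoint by (\ref{eq: Adjoints}).

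The first substantive step is to show $(D_{[i;j]}^q)^{\dagger}=X_{[i;j]}^q$. Expanding $D_{[i;j]}^q=\sum_{k=i}^j D_k^q R_{[i;j],k}^q$ and inspecting (\ref{def: R_i,j,k^q}), one sees that $R_{[i;j],k}^q$ is built only from the operators $T_{q,l}^{\pm 1/2}$ and $r_l$ with $l\neq k$. All of these are self-adjoint by (\ref{eq: Adjoints}) and pairwise commute, so $R_{[i;j],k}^q$ is self-adjoint and commutes with both $x_k$ and $D_k^q$. Hence
\[
\bigl(D_k^q R_{[i;j],k}^q\bigr)^{\dagger} = R_{[i;j],k}^q \, x_k = x_k R_{[i;j],k}^q,
\]
and summing over $k$ yields $(D_{[i;j]}^q)^{\dagger}=X_{[i;j]}^q$. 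The symmetric identity $(X_{[i;j]}^q)^{\dagger}=D_{[i;j]}^q$ follows by the same argument.

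With these adjoint relations in hand, I apply the anti-automorphism $\dagger$ to
\[
\Gamma_{[i;j]}^q = \Bigl(-A_+^{[i;j]}A_-^{[i;j]} + \tfrac{q^{-1/2}(K^{[i;j]})^2 - q^{1/2}(K^{[i;j]})^{-2}}{q-q^{-1}}\Bigr)P^{[i;j]}
\]
from (\ref{eq: Gamma in terms of other elements}). Using $(AB)^{\dagger}=B^{\dagger}A^{\dagger}$ together with $(A_\pm^{[i;j]})^{\dagger}=A_\mp^{[i;j]}$, self-adjointness of $K^{[i;j]}$ and $P^{[i;j]}$, and the obvious identity $(X_{[i;j]}^q D_{[i;j]}^q)^{\dagger}=X_{[i;j]}^q D_{[i;j]}^q$, one obtains
\[
(\Gamma_{[i;j]}^q)^{\dagger} = P^{[i;j]}\Bigl(-A_+^{[i;j]}A_-^{[i;j]} + \tfrac{q^{-1/2}(K^{[i;j]})^2 - q^{1/2}(K^{[i;j]})^{-2}}{q-q^{-1}}\Bigr).
\]
It remains to move $P^{[i;j]}$ back to the right. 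Since the realized operators satisfy the $\mathfrak{osp}_q(1\vert 2)$-relations (\ref{def: ospq with K}), $P^{[i;j]}$ commutes with $K^{[i;j]}$ and, because it anticommutes with each of $A_{\pm}^{[i;j]}$ separately, it commutes with the product $A_+^{[i;j]}A_-^{[i;j]}$. Therefore $(\Gamma_{[i;j]}^q)^{\dagger}=\Gamma_{[i;j]}^q$, as desired.

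The main obstacle is largely notational rather than conceptual: one has to be careful that $R_{[i;j],k}^q$ indeed involves no operators acting on the $x_k$-variable (so that it commutes with $x_k$ and $D_k^q$), and that the anticommutation $\{P,A_\pm\}=0$ gets used twice to produce a sign-free commutation of $P^{[i;j]}$ through the quadratic term $A_+^{[i;j]}A_-^{[i;j]}$. Once these facts are in place, the argument is just a bookkeeping of adjoints.
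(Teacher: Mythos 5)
Your proof is correct, and it takes a mildly different route from the paper's. The paper works with the fully expanded expression (\ref{eq: Gamma consecutive sets}) for \(\Gamma_{[i;j]}^q\) and checks self-adjointness term by term; the only non-obvious step there is the identity \(M_{kl}^{\dagger}=R_{[i;j],l}^qR_{[i;j],k}^q\bigl(q^{-1/2}x_lD_k^q-q^{1/2}x_kD_l^q\bigr)=M_{kl}\), which requires commuting the \(q\)-shift and reflection factors inside \(R_{[i;j],k}^qR_{[i;j],l}^q\) past \(x_kD_l^q\) and \(x_lD_k^q\) and tracking the resulting powers of \(q^{1/2}\) and signs. You instead stay with the factored formula (\ref{eq: Gamma in terms of other elements}) and reduce everything to the single adjoint relation \(\bigl(D_{[i;j]}^q\bigr)^{\dagger}=X_{[i;j]}^q\) (whose verification, via the structure of \(R_{[i;j],k}^q\) in (\ref{def: R_i,j,k^q}), is essentially the same bookkeeping the paper does inside \(M_{kl}\), but in a cleaner package) together with the realized \(\mathfrak{osp}_q(1\vert 2)\)-relations to push \(P^{[i;j]}\) back through the bracket. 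The one point both arguments use silently is that \(T_{q,i}^{\pm 1/2}\) is self-adjoint, which is not literally listed in (\ref{eq: Adjoints}) but is immediate since these operators act diagonally on monomials with real eigenvalues for \(q>0\); this is not a gap peculiar to your write-up. Your version has the small advantage of making transparent why the statement is restricted to sets of consecutive integers: it is exactly for those sets that the factorization (\ref{eq: Gamma in terms of other elements}) and the operators \(X_{[i;j]}^q\), \(D_{[i;j]}^q\) are available.
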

\begin{proof}
	This immediately follows from the expression (\ref{eq: Gamma consecutive sets}) for \(\Gamma_{[i;j]}^q\). Note that \(M_{kl}^{\dagger} = R_{[i;j],l}^qR_{[i;j],k}^q\left(q^{-1/2}x_lD_k^q-q^{1/2}x_kD_l^q\right)\), which equals \(M_{kl}\) by the definition (\ref{def: R_i,j,k^q}) of \(R_{[i;j],k}^q\).
\end{proof}

\subsection{Irreducibility of the action}
\label{Paragraph - Irreducibility}
The action of the symmetry algebra \(\mathcal{A}_n^q\) on the space \(\mathcal{M}_k^q(\mathbb{R}^n)\) turns out to be irreducible. To prove this statement, it will suffice to describe explicitly the action of its subalgebra generated by \(\Gamma_{[m]}^q\), \(\Gamma_{\{m,m+1\}}^q\) and \(\Gamma_{[1;m-1]\cup\{m+1\}}^q\), for arbitrary \(m\in\{2,3,\dots,n-1\}\). This is where the Casimir elements considered in Lemma \ref{lemma: Casimir C_m} will come in handy.

The action of the operator \(\Gamma_{\{m,m+1\}}^q\) on the basis functions \(\psi_{\mathbf{j}}\) takes a tridiagonal form. To describe this action, we will first introduce the terminology \emph{\(k\)-allowability} of a vector. We call a vector \(\j\in\mathbb{Z}^{n-1}\) \(k\)-allowable if all its entries are non-negative and add up to \(k\). We will also need the notation \(\mathbf{h}_m = (0,\dots,1,-1,\dots,0)\in\mathbb{N}^{n-1}\) for the vector which has a 1 at position \(m-1\), a \(-1\) at position \(m\) and zeros elsewhere. 

\begin{theorem}
	\label{prop: Three-term recurrence relation}
	Let \(m\in\{2,3,\dots,n-1\}\) and consider the basis functions \(\psi_{\mathbf{j}}\) as constructed in Definition \ref{def: Basis for M_k}. Then there exist scalars \(A_{\mathbf{j}}\), \(B_{\mathbf{j}}\) and \(C_{\mathbf{j}}\) such that the following holds:
	\begin{equation}
	\Gamma_{\{m,m+1\}}^q\psi_{\mathbf{j}} = B_{\j}\psi_{\j-\h}+A_{\mathbf{j}}\psi_{\j}+C_{\j}\psi_{\j+\h}.
	\end{equation}
	The constant \(B_{\j}\) is non-zero if the vectors \(\j\) and \(\j-\h\) are both \(k\)-allowable and \(C_{\j}\) is non-zero if \(\j\) and \(\j+\h\) are both \(k\)-allowable.
\end{theorem}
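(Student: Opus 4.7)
The plan is to combine a commutation argument, which forces $\Gamma_{\{m,m+1\}}^q\psi_{\mathbf{j}}$ to lie in a one-parameter family of basis vectors, with the tridiagonal identity (\ref{eq: q-BI relation with 2 generators 1}), which cuts this family down to three terms, and to exploit self-adjointness together with the Casimir from Lemma \ref{lemma: Casimir C_m} for the non-vanishing.

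First I would verify that $[\Gamma_{\{m,m+1\}}^q,\Gamma_{[\ell]}^q]=0$ for every $\ell\in[n]\setminus\{m\}$: this is Proposition \ref{prop: A contained in B} when $\ell\ge m+1$, and Remark \ref{remark: A B commuting} when $\ell\le m-1$. Combined with Proposition \ref{prop: Spherical q-Dirac-Dunkl equation}, this implies that $\Gamma_{\{m,m+1\}}^q\psi_{\mathbf{j}}$ is a joint eigenvector of $\{\Gamma_{[\ell]}^q:\ell\neq m\}$ with the same eigenvalues as $\psi_{\mathbf{j}}$. The formula for $\lambda_{\ell}$ together with $|q|\neq 1$ makes each partial sum $|\mathbf{j}'_{\ell-1}|$ recoverable from $\lambda_{\ell}(\mathbf{j}')$, so any $\psi_{\mathbf{j}'}$ appearing in the expansion of $\Gamma_{\{m,m+1\}}^q\psi_{\mathbf{j}}$ must satisfy $j'_i=j_i$ for $i\notin\{m-1,m\}$ and $j'_{m-1}+j'_m=j_{m-1}+j_m$, yielding
\[
\Gamma_{\{m,m+1\}}^q\psi_{\mathbf{j}}=\sum_{s\in\mathbb{Z}}c_s\,\psi_{\mathbf{j}+s\mathbf{h}_m},
\]
with $c_s=0$ unless $\mathbf{j}+s\mathbf{h}_m$ is $k$-allowable.

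Next I would apply (\ref{eq: q-BI relation with 2 generators 1}) to $\psi_{\mathbf{j}}$ and read off the coefficient of $\psi_{\mathbf{j}+s\mathbf{h}_m}$ for $s\neq 0$. Writing $\alpha=\lambda_m(\mathbf{j})$ and $\beta_s=\lambda_m(\mathbf{j}+s\mathbf{h}_m)$, and using that $\Gamma_{[m-1]}^q,\Gamma_{[m+1]}^q,\Gamma_{\{m\}}^q,\Gamma_{\{m+1\}}^q$ act by $s$-independent scalars on the chain, the identity collapses to $c_s\bigl[(\beta_s+q\alpha)(\beta_s+q^{-1}\alpha)-1\bigr]=0$. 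A short computation using $q[N]_q-[N\pm 1]_q=\mp q^{\mp N}$ shows that the two roots of the quadratic $(\beta+q\alpha)(\beta+q^{-1}\alpha)=1$ in $\beta$ are precisely $\beta_{+1}$ and $\beta_{-1}$, while $|q|\neq 1$ guarantees both that $s\mapsto\beta_s$ is injective and that $\beta_{+1}\neq\beta_{-1}$. Hence $c_s=0$ for $|s|\ge 2$, which is the tridiagonal form.

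The remaining task is to prove that $B_{\mathbf{j}}=c_{-1}$ and $C_{\mathbf{j}}=c_{+1}$ are non-zero in the allowable regime. By Lemma \ref{lemma: Self-adjoint} and the orthogonality of $\{\psi_{\mathbf{j}'}\}$ with respect to (\ref{def: Inner product}) (which follows because the commuting self-adjoint operators $\Gamma_{[\ell]}^q$ have a simple joint spectrum), $B_{\mathbf{j}+\mathbf{h}_m}$ and $C_{\mathbf{j}}$ are proportional up to a positive ratio of norms, so it suffices to control $C_{\mathbf{j}}$. I would then use the Casimir $C_m$ of Lemma \ref{lemma: Casimir C_m}: by (\ref{eq: Alternative expression Casimir}) it acts as a known scalar on the invariant chain $W=\bigoplus_s\mathbb{C}\,\psi_{\mathbf{j}+s\mathbf{h}_m}$, while the tridiagonal matrix of $\Gamma_{\{m,m+1\}}^q$ on $W$, together with (\ref{eq: q-BI relation with 2 generators 2}), recursively determines the products $B_{\mathbf{j}+(s+1)\mathbf{h}_m}C_{\mathbf{j}+s\mathbf{h}_m}$ in $s$. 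Unrolling this recurrence from the terminating boundary---where one of the factors automatically vanishes because $\mathbf{j}+s\mathbf{h}_m$ fails to be $k$-allowable---yields closed-form expressions whose non-vanishing in the interior can be read off directly. The main obstacle I expect is precisely this final bookkeeping: the tridiagonal form itself is essentially a formal consequence of the algebra relations and the spherical $q$-Dirac-Dunkl equation, whereas pinning down the explicit recurrence for $B_{\mathbf{j}+(s+1)\mathbf{h}_m}C_{\mathbf{j}+s\mathbf{h}_m}$ requires careful manipulation of the two tridiagonal identities in tandem with the Casimir scalar.
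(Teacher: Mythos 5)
Your proposal is correct and follows essentially the same route as the paper's proof: the paper likewise expands \(\Gamma_{\{m,m+1\}}^q\psi_{\j}\) in the basis, uses (\ref{eq: q-BI relation with 2 generators 1}) to force \(\lambda_m(\j')\in\{\lambda_m(\j+\h),\lambda_m(\j-\h)\}\) for \(\j'\neq\j\), acts with the commuting operators \(\Gamma_{[\ell]}^q\), \(\ell\neq m\), to confine \(\j'\) to the chain \(\j+s\h\), and then combines (\ref{eq: q-BI relation with 2 generators 2}) with the two evaluations of the Casimir \(C_m\) to obtain an explicit factored expression for \(B_{\j}C_{\j-\h}\), from which the non-vanishing is read off. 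The only differences are organizational: you run the commutant argument before the quadratic constraint rather than after, and your self-adjointness/orthogonality shortcut (valid only for \(q\) real and positive, where (\ref{def: Inner product}) is an inner product) is not actually needed, since non-vanishing of the single product \(B_{\j}C_{\j-\h}\) for all admissible \(\j\) already yields both claims.
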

\begin{proof}
	Acting with \(\Gamma_{\{m,m+1\}}^q\) on the function \(\psi_{\j}\), we obtain once more a polynomial in \(\mathcal{M}_k^q(\mathbb{R}^n)\), which can thus be expanded in the considered basis:
	\begin{equation}
	\label{eq: Recurrence first expression}
	\Gamma_{\{m,m+1\}}^q\psi_{\j} = \sum_{\j'} A_{\j'}\psi_{\j'},
	\end{equation}
	where the sum runs over all \(k\)-allowable vectors \(\j'\). We now let both sides of expression (\ref{eq: q-BI relation with 2 generators 1}) act on the function \(\psi_{\j}\) and isolate the coefficient of \(\psi_{\j'}\). The resulting equation becomes:
	\begin{equation}
	\label{eq: Relation with deltas}
	A_{\mathbf{j}'}\left(\lambda_m(\mathbf{j})^2 + \lambda_m(\mathbf{j}')^2 + (q+q^{-1})\lambda_m(\mathbf{j})\lambda_m(\mathbf{j}') - 1 \right) = \delta_{\j,\j'}\alpha_{m}(\j),
	\end{equation}
	where \(\delta_{\j,\j'}\) stands for \(\delta_{j_1,j_1'}\delta_{j_2,j_2'}\dots\delta_{j_{n-1},j_{n-1}'}\) and where
	\begin{align*}
	\begin{split}
	\alpha_m(\j) = & \,(q^{1/2}+q^{-1/2})\left(\lambda_{m-1}(\j)\lambda_{m+1}(\j)+[\mu_m]_q[\mu_{m+1}]_q\right)\\ & +  (q^{1/2}+q^{-1/2})^2\left([\mu_m]_q\lambda_{m+1}(\mathbf{j}) + [\mu_{m+1}]_q\lambda_{m-1}(\j)\right)\lambda_{m}(\mathbf{j}).
	\end{split}
	\end{align*}
	
	For \(\j' = \j\), the equation (\ref{eq: Relation with deltas}) gives an expression for \(A_{\j}\):
	\[
	A_{\j} = \frac{\alpha_m(\j)}{(2+q+q^{-1})\lambda_m(\j)^2-1}.
	\]
	The denominator in this expression can be checked to be non-zero since \(\vert q\vert \neq 1\).
	
	If \(\j'\neq\j\), then the coefficient \(A_{\j'}\) will vanish unless
	\[
	\lambda_m(\mathbf{j})^2 + \lambda_m(\mathbf{j}')^2 + (q+q^{-1})\lambda_m(\mathbf{j})\lambda_m(\mathbf{j}') = 1,
	\]
	which is only fulfilled for \(\vert\j'_{m-1}\vert = \vert\j_{m-1}\vert\pm1\). As before, \(\vert\j_{m-1}\vert\) denotes the sum \(j_1+\dots+j_{m-1}\). As a result, the equation (\ref{eq: Recurrence first expression}) reduces to
	\begin{equation}
	\label{eq: Recurrence second expression}
	\Gamma_{\{m,m+1\}}^q\psi_{\j} = A_{\j}\psi_{\j} + \sum_{ \vert\j_{m-1}'\vert = \vert\j_{m-1}\vert-1}A_{\j'}\psi_{\j'} + \sum_{ \vert\j_{m-1}'\vert = \vert\j_{m-1}\vert+1}A_{\j'}\psi_{\j'}.
	\end{equation}
	We can now show that the coefficients \(A_{\j'}\) vanish unless \(\j' = \j\pm\h\). The proof requires several steps, which should be taken in consecutive order. For \(m = 2\), one should skip the indicated Step 1 and immediately jump to Step \(m-1\).
	\begin{itemize}
		\item Step 1: Let \(\Gamma_{[2]}^q\) act on both sides of equation (\ref{eq: Recurrence second expression}). As this step is only executed if \(m\geq 3\), we know from Remark \ref{remark: A B commuting} that \(\Gamma_{[2]}^q\) commutes with \(\Gamma_{\{m,m+1\}}^q\), hence we obtain from Proposition \ref{prop: Spherical q-Dirac-Dunkl equation}
		\begin{align*}
		\begin{split}
		& \lambda_2(\j)\Gamma_{\{m,m+1\}}^q\psi_{\j} \\ = & \, \lambda_2(\j)A_{\j}\psi_{\j} + \sum_{ \vert\j_{m-1}'\vert = \vert\j_{m-1}\vert-1}A_{\j'}\lambda_2(\j')\psi_{\j'} + \sum_{ \vert\j_{m-1}'\vert = \vert\j_{m-1}\vert+1}A_{\j'}\lambda_2(\j')\psi_{\j'}.
		\end{split}
		\end{align*}
		At the same time, this yields the right hand side of (\ref{eq: Recurrence second expression}) multiplied by \(\lambda_2(\j)\). By the linear independence of the functions \(\psi_{\j'}\) we obtain that \(A_{\j'}\) vanishes unless \(\lambda_2(\j') = \lambda_2(\j)\), or equivalently
		\[
		j_1' = j_1.
		\]
		\item Step 2 to \(m-2\): In a similar fashion, acting on (\ref{eq: Recurrence second expression}) with \(\Gamma_{[3]}^q,\Gamma_{[4]}^q,\dots,\Gamma_{[m-1]}^q\), we find that
		\[
		j_2' = j_2, \dots, j_{m-2}' = j_{m-2}.
		\]
		\item Step \(m-1\): Act on both sides of (\ref{eq: Recurrence second expression}) with \(\Gamma_{[m+1]}^q\), which once more commutes with \(\Gamma_{\{m,m+1\}}^q\) by Proposition \ref{prop: A contained in B}.
		Proceeding as before, we find that \(A_{\j'}\) vanishes unless \(\lambda_{m+1}(\j') = \lambda_{m+1}(\j)\), or equivalently
		\[
		\vert\j_{m}'\vert = \vert\j_{m}\vert.		
		\]
		Together with the results obtained in the preceding steps, this yields
		\[
		j_{m-1}'+j_{m}' = j_{m-1}+j_m.
		\]
		Combining this with the constraint that \(\vert\j'_{m-1}\vert = \vert\j_{m-1}\vert\pm1\) we conclude that
		\[
		j_{m-1}' = j_{m-1}\pm 1, \quad j_m' = j_{m}\mp 1.
		\]
		\item Step \(m\) to \(n-3\): Acting on (\ref{eq: Recurrence second expression}) with \(\Gamma_{[m+2]}^q,\dots,\Gamma_{[n-1]}^q\), we conclude that
		\[
		j_{m+1}' = j_{m+1},\dots,j_{n-2}' = j_{n-2}.
		\]
		As we only consider \(k\)-allowable vectors, whose entries add up to \(k\), this also implies that \(j_{n-1}' = j_{n-1}\).
	\end{itemize}
	We conclude that indeed all \(\j'\) present in (\ref{eq: Recurrence second expression}) satisfy \(\j' = \j\pm\h\), so we obtain the anticipated tridiagonal expression
	\[
	\Gamma_{\{m,m+1\}}^q\psi_{\mathbf{j}} = B_{\j}\psi_{\j-\h}+A_{\mathbf{j}}\psi_{\j}+C_{\j}\psi_{\j+\h}.
	\]
	Note the change in notation of the coefficients, this distinction is necessary when \(\j\) is no longer fixed. It remains to be proven that \(B_{\j}\) and \(C_{\j}\) are non-zero except in a number of special cases. To achieve this, we will act on \(\psi_{\j}\) with both sides of the equation (\ref{eq: q-BI relation with 2 generators 2}), use the obtained tridiagonal expression and gather the terms proportional to \(\psi_{\j}\). This results in the following recurrence relation:
	\begin{align}
	\label{eq: Three-term recurrence relation 1}
	\begin{split}
	& \left(2\lambda_m(\j)+(q+q^{-1})\lambda_m(\j+\h)\right) B_{\j+\h}C_{\j}
	+ \left(2\lambda_m(\j)+(q+q^{-1})\lambda_m(\j-\h)\right) B_{\j}C_{\j-\h} \\  = & \,\lambda_m(\j) + (q^{1/2}+q^{-1/2})\left([\mu_m]_q\lambda_{m-1}(\j) + [\mu_{m+1}]_q\lambda_{m+1}(\j)\right) \\ & + (q^{1/2}+q^{-1/2})^2\left([\mu_m]_q\lambda_{m+1}(\j)+[\mu_{m+1}]_q\lambda_{m-1}(\j)\right)A_{\j} -\left(2+q+q^{-1}\right)\lambda_m(\j)A_{\j}^2.
	\end{split}
	\end{align}
	This equation is to be compared with (3.26) in \cite{Genest&Vinet&Zhedanov-2016}. A second relation can be obtained using the Casimir element \(C_m\) from Lemma \ref{lemma: Casimir C_m}. We will act with \(C_m\) on the basis functions \(\psi_{\j}\). One easily finds from (\ref{eq: Alternative expression Casimir}) and Proposition \ref{prop: Spherical q-Dirac-Dunkl equation} that this yields
	\begin{align}
	\label{eq: Action of Casimir C}
	\begin{split}
	C_m\psi_{\j} = & \,\left(\lambda_{m-1}(\j)^2+\lambda_{m+1}(\j)^2 + \left([\mu_m]_q\right)^2 + \left([\mu_{m+1}]_q\right)^2 \right.\\ &\left.- (q-q^{-1})^2[\mu_m]_q[\mu_{m+1}]_q\lambda_{m-1}(\j)\lambda_{m+1}(\j) - \frac{q}{(1+q)^2}\right)\psi_{\j}.
	\end{split}
	\end{align}
	On the other hand, we already know that
	\begin{equation}
	\label{eq: Action of Gamma 1}
	\Gamma_{\{m,m+1\}}^q\psi_{\j} = B_{\j}\psi_{\j-\h}+ A_{\j}\psi_{\j} + C_{\j}\psi_{\j+\h}.
	\end{equation}
	Furthermore we have \[\Gamma_{[1;m-1]\cup\{m+1\}}^q = \{\Gamma_{[m]}^q,\Gamma_{\{m,m+1\}}^q\}_q - (q^{1/2}+q^{-1/2})\left(\Gamma_{\{m\}}^q\Gamma_{[m+1]}^q + \Gamma_{[m-1]}^q\Gamma_{\{m+1\}}^q\right), \]
	hence
	\begin{align}
	\label{eq: Action of Gamma 2}
	\begin{split}
	\Gamma_{[1;m-1]\cup\{m+1\}}^q\psi_{\j} = & \,(q^{1/2}+q^{-1/2})\left(A_{\j}\lambda_m(\j) - [\mu_m]_q\lambda_{m+1}(\j)-[\mu_{m+1}]_q\lambda_{m-1}(\j) \right)\psi_{\j} \\
	& \, + (q^{1/2}\lambda_m(\j-\h)+q^{-1/2}\lambda_m(\j))B_{\j}\psi_{\j-\h} \\
	& \, + (q^{1/2}\lambda_m(\j+\h)+q^{-1/2}\lambda_m(\j))C_{\j}\psi_{\j+\h}.
	\end{split}
	\end{align}
	Acting now with (\ref{def: Casimir operator C}) on \(\psi_{\j}\) and using (\ref{eq: Action of Gamma 1}) and (\ref{eq: Action of Gamma 2}), one finds:
	\begin{align}
	\label{eq: Action of Casimir C 2}
	\begin{split}
	C_m\psi_{\j} = \left[\left(q^{2\vert\j_{m-1}\vert+2\gamma_{[m]}}+q^{-1}\right)B_{\j+\h}C_{\j}+\left(q^{-2\vert\j_{m-1}\vert-2\gamma_{[m]}+2}+q^{-1}\right)B_{\j}C_{\j-\h}+ E_{\j}\right]\psi_{\j},
	\end{split}
	\end{align}
	where
	\begin{align*}
	E_{\j} = & \, qA_{\j}^2 + q^{-1}(q^{1/2}+q^{-1/2})^2\left(A_{\j}\lambda_m(\j)-[\mu_m]_q\lambda_{m+1}(\j)-[\mu_{m+1}]_q\lambda_{m-1}(\j)\right)^2 \\
	& + q\lambda_m(\j)^2 - (q^{-1/2}-q^{3/2})\left([\mu_m]_q[\mu_{m+1}]_q+\lambda_{m-1}(\j)\lambda_{m+1}(\j)\right)A_{\j} \\
	& - (q^{-1/2}-q^{3/2})\left([\mu_m]_q\lambda_{m-1}(\j)+[\mu_{m+1}]_q\lambda_{m+1}(\j)\right)\lambda_m(\j) \\
	& - (q^{1/2}-q^{-3/2})(q^{1/2}+q^{-1/2})\left(A_{\j}\lambda_m(\j) - [\mu_m]_q\lambda_{m+1}(\j)-[\mu_{m+1}]_q\lambda_{m-1}(\j) \right)\\
	& \phantom{-}\  \left([\mu_{m+1}]_q\lambda_{m-1}(\j)+[\mu_m]_q\lambda_{m+1}(\j)+q\lambda_m(\j)A_{\j}\right).
	\end{align*}
	Upon comparison with (\ref{eq: Action of Casimir C}), one finds a second expression that relates \(B_{\j}C_{\j-\h}\) and \(B_{\j+\h}C_{\j}\):
	\begin{align}
	\label{eq: Three-term recurrence relation 2}
	\begin{split}
	& \left(q^{2\vert\j_{m-1}\vert+2\gamma_{[m]}}+q^{-1}\right)B_{\j+\h}C_{\j}+\left(q^{-2\vert\j_{m-1}\vert-2\gamma_{[m]}+2}+q^{-1}\right)B_{\j}C_{\j-\h} \\ = & \, \lambda_{m-1}(\j)^2+\lambda_{m+1}(\j)^2 + \left([\mu_m]_q\right)^2 + \left([\mu_{m+1}]_q\right)^2 - \frac{q}{(1+q)^2} \\ & - (q-q^{-1})^2[\mu_m]_q[\mu_{m+1}]_q\lambda_{m-1}(\j)\lambda_{m+1}(\j)-E_{\j}
	\end{split}
	\end{align}
	Now observe from (\ref{def: eigenvalue}) that
	\begin{align*}
	2\lambda_m(\j)+(q+q^{-1})\lambda_m(\j+\h) & = (-1)^{\vert\j_{m-1}\vert+1}q^{-\vert\j_{m-1}\vert-\gamma_{[m]}+\frac12}\left(q^{2\vert\j_{m-1}\vert+2\gamma_{[m]}}+q^{-1}\right), \\
	2\lambda_m(\j) + (q+q^{-1})\lambda_m(\j-\h) & = (-1)^{\vert\j_{m-1}\vert+1}q^{-\vert\j_{m-1}\vert-\gamma_{[m]}+\frac12}\left(-q^{2\vert\j_{m-1}\vert+2\gamma_{[m]}-2}-q\right).
	\end{align*}
	Hence the equation (\ref{eq: Three-term recurrence relation 1}) can be rewritten as
	\begin{align}
	\label{eq: Three-term recurrence relation 3}
	\begin{split}
	& -\left(q^{2\vert\j_{m-1}\vert+2\gamma_{[m]}}+q^{-1}\right)B_{\j+\h}C_{\j} + \left(q^{2\vert\j_{m-1}\vert+2\gamma_{[m]}-2}+q\right)B_{\j}C_{\j-\h} \\
	= & \, (-1)^{\vert\j_{m-1}\vert}q^{\vert\j_{m-1}\vert + \gamma_{[m]}-\frac12} \left[
	\lambda_m(\j) + (q^{1/2}+q^{-1/2})\left([\mu_m]_q\lambda_{m-1}(\j) + [\mu_{m+1}]_q\lambda_{m+1}(\j)\right)\right. \\ & \left.+ (q^{1/2}+q^{-1/2})^2\left([\mu_m]_q\lambda_{m+1}(\j)+[\mu_{m+1}]_q\lambda_{m-1}(\j)\right)A_{\j} -\left(2+q+q^{-1}\right)\lambda_m(\j)A_{\j}^2\right].
	\end{split}
	\end{align}
	Adding the equations (\ref{eq: Three-term recurrence relation 2}) and (\ref{eq: Three-term recurrence relation 3}), the term in \(B_{\j+\h}C_{\j}\) will disappear and we are left with an explicit expression for \(B_{\j}C_{\j-\h}\), which after a lengthy calculation can be factored as
	\[
	B_{\j}C_{\j-\h} = \frac{S_{\j-\h}T_{\j}}{(q-q^{-1})^2},
	\]
	where we have defined \(S_{\j}\) and \(T_{\j}\) as
	\begin{align*}
	S_{\j} & = -\frac{\left(1+(-q)^{j_{m-1}}a_{\j}b_{\j}\right)\left(1-(-q)^{j_{m-1}}a_{\j}c_{\j}\right)\left(1-(-q)^{j_{m-1}}a_{\j}d_{\j}\right)\left(1-(-q)^{j_{m-1}-1}a_{\j}b_{\j}c_{\j}d_{\j}\right)}{a_{\j}\left(1+q^{2j_{m-1}-1}a_{\j}b_{\j}c_{\j}d_{\j}\right)\left(1-q^{2j_{m-1}}a_{\j}b_{\j}c_{\j}d_{\j}\right)}, \\
	T_{\j} & = \frac{a_{\j}\left(1-(-q)^{j_{m-1}}\right)\left(1-(-q)^{j_{m-1}-1}b_{\j}c_{\j}\right)\left(1-(-q)^{j_{m-1}-1}b_{\j}d_{\j}\right)\left(1+(-q)^{j_{m-1}-1}c_{\j}d_{\j}\right)}{\left(1-q^{2j_{m-1}-2}a_{\j}b_{\j}c_{\j}d_{\j}\right)\left(1+q^{2j_{m-1}-1}a_{\j}b_{\j}c_{\j}d_{\j}\right)},
	\end{align*}
	with
	\begin{align*}
	a_{\j} = q^{\gamma_m+\gamma_{m+1}-\frac12}, \quad b_{\j} = (-1)^{j_{m-1}+j_m+1}q^{-(j_{m-1}+j_m+\gamma_m+\gamma_{m+1}-\frac12)}, \\
	c_{\j} = (-1)^{j_{m-1}+j_m+1}q^{\vert\j_{m-2}\vert+\vert\j_{m}\vert+\gamma_{[m-1]}+\gamma_{[m+1]}-\frac12},  \quad d_{\j} = q^{\gamma_m-\gamma_{m+1}+\frac12}.
	\end{align*}
	In this factored form one readily checks that \(B_{\j}C_{\j-\h}\) vanishes only if \(\j\) or \(\j-\h\) is not \(k\)-allowable.
\end{proof}

In order to prove the irreducibility of the considered action, we would like to be able to map the basis function \(\psi_{\j}\) to \(\psi_{\j-\h}\) and \(\psi_{\j+\h}\). This can be done by means of the following projection operators. For \(\j-\h\) \(k\)-allowable, we define
\begin{equation}
\label{def: Projection operator j-h}
\mathbb{P}_{\j,\,\j-\h} = \left(\Gamma_{[m]}^q-\lambda_m(\j)\right)\left(\Gamma_{[m]}^q-\lambda_m(\j+\h)\right),
\end{equation}
and likewise, for \(\j+\h\) \(k\)-allowable,
\begin{equation}
\label{def: Projection operator j+h}
\mathbb{P}_{\j,\,\j+\h} = \left(\Gamma_{[m]}^q-\lambda_m(\j)\right)\left(\Gamma_{[m]}^q-\lambda_m(\j-\h)\right).
\end{equation}
\begin{lemma}
	\label{lemma: Irreducibility of the action}
	The combined action of the operator \(\Gamma_{\{m,m+1\}}^q\) and the projectors \(\mathbb{P}_{\j,\,\j-\h}\) and \(\mathbb{P}_{\j,\,\j+\h}\) on the functions \(\psi_{\j}\) is as follows:
	\begin{align}
	\begin{split}
	\mathbb{P}_{\j,\,\j-\h}\Gamma_{\{m,m+1\}}^q\psi_{\j} = \beta_m(\j-\h)\psi_{\j-\h}, \\
	\mathbb{P}_{\j,\,\j+\h}\Gamma_{\{m,m+1\}}^q\psi_{\j} = \gamma_m(\j+\h)\psi_{\j+\h},
	\end{split}
	\end{align}
	where the coefficient
	\begin{align*}
	\beta_m(\j-\h) & = (\lambda_m(\j-\h)-\lambda_m(\j))(\lambda_m(\j-\h)-\lambda_m(\j+\h))B_{\j}
	\end{align*}
	is non-zero if \(\j\) and \(\j-\h\) are both \(k\)-allowable, and
	\begin{align*}
	\gamma_m(\j+\h) & = (\lambda_m(\j+\h)-\lambda_m(\j))(\lambda_m(\j+\h)-\lambda_m(\j-\h))C_{\j}
	\end{align*}
	is non-zero if \(\j\) and \(\j+\h\) are both \(k\)-allowable.
\end{lemma}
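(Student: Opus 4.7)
The plan is to combine the tridiagonal expression for $\Gamma_{\{m,m+1\}}^q\psi_{\j}$ obtained in Theorem \ref{prop: Three-term recurrence relation} with the diagonal action of $\Gamma_{[m]}^q$ on the basis from Proposition \ref{prop: Spherical q-Dirac-Dunkl equation}. Concretely, I would first write
\[
\Gamma_{\{m,m+1\}}^q\psi_{\j} = B_{\j}\psi_{\j-\h}+A_{\j}\psi_{\j}+C_{\j}\psi_{\j+\h}
\]
and then apply the projectors (\ref{def: Projection operator j-h}) and (\ref{def: Projection operator j+h}) termwise. Since $\Gamma_{[m]}^q$ acts on each $\psi_{\j'}$ as the scalar $\lambda_m(\j')$, each factor $(\Gamma_{[m]}^q-\lambda_m(\cdot))$ evaluates to the corresponding eigenvalue difference, turning the application of $\mathbb{P}_{\j,\,\j\pm\h}$ into multiplication by a numerical coefficient.

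The key observation is that $\lambda_m(\j\pm\h)$ depends only on $\vert(\j\pm\h)_{m-1}\vert=\vert\j_{m-1}\vert\pm 1$, so by (\ref{def: eigenvalue}) the three values $\lambda_m(\j-\h)$, $\lambda_m(\j)$, $\lambda_m(\j+\h)$ carry alternating signs $(-1)^{\vert\j_{m-1}\vert\mp 1}$ and $(-1)^{\vert\j_{m-1}\vert}$, hence are pairwise distinct (the underlying $q$-numbers $[r+\gamma_{[m]}-\tfrac12]_q$ for $r\in\{\vert\j_{m-1}\vert-1,\vert\j_{m-1}\vert,\vert\j_{m-1}\vert+1\}$ are all strictly positive because $\vert q\vert\neq 1$ and $\gamma_{[m]}>0$). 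Consequently, applying $\mathbb{P}_{\j,\,\j-\h}$ annihilates the $\psi_{\j}$ and $\psi_{\j+\h}$ components while surviving on $\psi_{\j-\h}$ with the prefactor
\[
(\lambda_m(\j-\h)-\lambda_m(\j))(\lambda_m(\j-\h)-\lambda_m(\j+\h))B_{\j},
\]
which is precisely $\beta_m(\j-\h)$. The computation for $\mathbb{P}_{\j,\,\j+\h}$ is fully analogous and yields $\gamma_m(\j+\h)\,\psi_{\j+\h}$.

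For the non-vanishing claims, the two eigenvalue differences are always nonzero by the distinctness just argued, so $\beta_m(\j-\h)$ is nonzero iff $B_{\j}$ is, and $\gamma_m(\j+\h)$ is nonzero iff $C_{\j}$ is. But Theorem \ref{prop: Three-term recurrence relation} already asserts that $B_{\j}\neq 0$ precisely when both $\j$ and $\j-\h$ are $k$-allowable, and $C_{\j}\neq 0$ precisely when both $\j$ and $\j+\h$ are $k$-allowable. This closes the argument. No substantial obstacle is expected; the only delicate point is the verification that the three eigenvalues $\lambda_m(\j-\h),\lambda_m(\j),\lambda_m(\j+\h)$ are genuinely distinct, which reduces to the observation that consecutive $q$-numbers $[s]_q$ and $[s+1]_q$ cannot be equal in absolute value for $\vert q\vert\neq 1$, together with the sign alternation coming from the factor $(-1)^{\vert\j_{m-1}\vert}$ in (\ref{def: eigenvalue}).
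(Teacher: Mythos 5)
Your proposal is correct and follows exactly the paper's (very terse) proof, which simply combines Proposition \ref{prop: Spherical q-Dirac-Dunkl equation} with Theorem \ref{prop: Three-term recurrence relation}; you have merely written out the termwise application of the projectors and the resulting eigenvalue differences explicitly. The only cosmetic imprecision is your claim that all three $q$-numbers $[r+\gamma_{[m]}-\tfrac12]_q$ are strictly positive in general (for $r=\vert\j_{m-1}\vert-1$ this needs $\vert\j_{m-1}\vert\geq 1$, which is guaranteed precisely when $\j-\h$ is $k$-allowable, i.e.\ in the only case where the non-vanishing claim is asserted), so the argument stands.
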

\begin{proof}
	This follows immediately upon combining Proposition \ref{prop: Spherical q-Dirac-Dunkl equation} and Theorem \ref{prop: Three-term recurrence relation}.
\end{proof}

This allows us to prove the irreducibility of the considered action.

\begin{theorem}
	\label{thm: Action irreducible}
	The symmetry algebra \(\mathcal{A}_n^q\) acts irreducibly on the space \(\mathcal{M}_k^q(\mathbb{R}^n)\) of \(q\)-Dunkl monogenics.
\end{theorem}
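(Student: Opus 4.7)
The plan is to show that any non-zero $\mathcal{A}_n^q$-invariant subspace $V \subseteq \mathcal{M}_k^q(\mathbb{R}^n)$ must equal the whole space, in two moves: first, extract a single basis vector $\psi_{\j_0}$ from an arbitrary non-zero element of $V$ by spectral projection using the abelian subalgebra $\langle\Gamma_{[\ell]}^q\rangle_{\ell=2}^{n-1}$; second, show that from $\psi_{\j_0}$ one can reach every other basis vector $\psi_{\j}$ using the ladder operators $\mathbb{P}_{\j,\j\pm\h}\Gamma_{\{m,m+1\}}^q$.

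For the first step, let $v\in V$ be non-zero and write $v=\sum_{\j}c_{\j}\psi_{\j}$ as a finite sum over $k$-allowable vectors. Pick any $\j_0$ in the support. By Proposition \ref{prop: Spherical q-Dirac-Dunkl equation} the mutually commuting generators $\Gamma_{[\ell]}^q$ ($\ell=2,\dots,n-1$) act diagonally on the basis, and the eigenvalue tuple $(\lambda_2(\j),\dots,\lambda_{n-1}(\j))$ uniquely determines $\j$. For each $\j$ in the support with $\j\neq\j_0$, choose some $\ell(\j)$ with $\lambda_{\ell(\j)}(\j)\neq\lambda_{\ell(\j)}(\j_0)$. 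Then
\[
\Pi \;=\; \prod_{\j\neq\j_0}\frac{\Gamma_{[\ell(\j)]}^q-\lambda_{\ell(\j)}(\j)}{\lambda_{\ell(\j)}(\j_0)-\lambda_{\ell(\j)}(\j)}
\]
lies in $\mathcal{A}_n^q$, fixes $\psi_{\j_0}$, and annihilates every other $\psi_{\j}$ in the support. Hence $\Pi v=c_{\j_0}\psi_{\j_0}\in V$, so $\psi_{\j_0}\in V$.

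For the second step, fix $m\in\{2,\dots,n-1\}$ and observe that the projectors $\mathbb{P}_{\j,\j\pm\h}$ defined in (\ref{def: Projection operator j-h})--(\ref{def: Projection operator j+h}) are polynomials in $\Gamma_{[m]}^q$ and therefore belong to $\mathcal{A}_n^q$. Lemma \ref{lemma: Irreducibility of the action} guarantees that $\mathbb{P}_{\j,\j\pm\h}\Gamma_{\{m,m+1\}}^q\psi_{\j}$ is a \emph{non-zero} scalar multiple of $\psi_{\j\pm\h}$ whenever both $\j$ and $\j\pm\h$ are $k$-allowable. Thus, whenever $\psi_{\j}\in V$ and $\j\pm\harg{m}$ is $k$-allowable, also $\psi_{\j\pm\harg{m}}\in V$. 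It remains to verify that the graph on the set of $k$-allowable vectors in $\mathbb{N}^{n-1}$ with edges $\j\leftrightarrow\j\pm\harg{m}$ (for $m=2,\dots,n-1$) is connected: starting from any $\j$, one can sweep all weight into the first coordinate by repeatedly applying $+\harg{m}$, transferring one unit at a time from position $m$ to position $m-1$ whenever $j_m>0$, eventually reaching $(k,0,\dots,0)$; reversing the procedure gives a path to any prescribed target. Consequently $V$ contains every basis vector $\psi_{\j}$, and $V=\mathcal{M}_k^q(\mathbb{R}^n)$.

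The real obstacle has already been overcome in Theorem \ref{prop: Three-term recurrence relation} and Lemma \ref{lemma: Irreducibility of the action}, which supply the tridiagonal action of $\Gamma_{\{m,m+1\}}^q$ together with the factored form of $B_{\j}C_{\j-\h}$ that establishes non-vanishing of the off-diagonal coefficients exactly on $k$-allowable vectors. Given these inputs, the present argument is a clean interplay of spectral separation by the diagonal action of the $\Gamma_{[\ell]}^q$ (whose joint spectrum is simple on the basis) and combinatorial connectivity of the index set under the ladder moves; I anticipate no additional technical difficulty.
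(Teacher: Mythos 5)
Your proof is correct and follows essentially the same route as the paper: both rest on Lemma \ref{lemma: Irreducibility of the action} to move between basis vectors via the ladder operators \(\mathbb{P}_{\j,\,\j\pm\harg{m}}\Gamma_{\{m,m+1\}}^q\), and your connectivity argument (sweeping all weight into the first coordinate and reversing) is just a reorganization of the paper's coordinate-by-coordinate adjustment of \(j_1,j_2,\dots\). The only difference is that you make explicit the preliminary spectral-projection step --- extracting a single \(\psi_{\j_0}\) from an arbitrary non-zero invariant vector using the simple joint spectrum of the commuting \(\Gamma_{[\ell]}^q\) --- which the paper leaves implicit in its assertion that transitivity on the basis suffices.
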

\begin{proof}
A necessary and sufficient condition for the irreducibility is the ability to map any basis function \(\psi_{\j}\) to any other basis function \(\psi_{\j'}\). This can be done using the combined action of the projectors \(\mathbb{P}_{\j,\,\j\pm\h}\) and the operators \(\Gamma_{\{m,m+1\}}^q\) as in Lemma \ref{lemma: Irreducibility of the action}. First consider the case \(j_1'>j_1\). Applying consecutively \(\mathbb{P}_{\j,\,\j+\harg{2}}\Gamma_{\{2,3\}}^q\), \(\mathbb{P}_{\j+\harg{2},\,\j+2\harg{2}}\Gamma_{\{2,3\}}^q\),\dots,\(\mathbb{P}_{\j+(j_1'-j_1-1)\harg{2},\,\j+(j_1'-j_1)\harg{2}}\Gamma_{\{2,3\}}^q\) to the function \(\psi_{\j}\), we obtain
\[
\psi_{(j_1',j_1+j_2-j_1',j_3,\dots,j_{n-1})},
\]
up to a proportionality constant which is non-zero by Lemma \ref{lemma: Irreducibility of the action}. The case \(j_1'<j_1\) follows from a similar approach using the projector \(\mathbb{P}_{\j,\,\j-\harg{2}}\). If \(j_1' = j_1\) we may immediately move on to the next step. Similarly, using \(\mathbb{P}_{\j,\,\j\pm\harg{3}}\Gamma_{\{3,4\}}^q\), we map
\[
\psi_{(j_1',j_1+j_2-j_1',j_3,\dots,j_{n-1})} \mapsto \psi_{(j_1',j_2',j_1+j_2+j_3-j_1'-j_2',\dots,j_{n-1})},
\]
and so on. Note that the index \(j_{n-1}'\) will be fixed once \(j_{n-2}'\) is, by the constraint that \(\j'\) is \(k\)-allowable. This concludes the proof.
\end{proof}

\section*{Conclusions}
In this paper, we have constructed the higher rank \(q\)-deformed Bannai-Ito algebra \(\mathcal{A}_n^q\) in the multifold tensor product of \(\mathfrak{osp}_q(1\vert 2)\). By the established isomorphism in the rank one case, this also yields a higher rank version of the universal Askey-Wilson algebra. The main difficulty, namely the construction of the operators \(\Gamma_A^q\) for sets \(A\) with holes, has been overcome by the identification of a suitable coideal subalgebra and the corresponding coaction \(\tau\). We have derived a Casimir element for certain subalgebras of the higher rank \(q\)-Bannai-Ito and Askey-Wilson algebras, and we have shown that their generators satisfy analogs of the tridiagonal relations. We have also considered a realization of \(\mathfrak{osp}_q(1\vert 2)\) in terms of \(q\)-shift operators and reflections, which we have called the \(\mathbb{Z}_2^n\) \(q\)-Dirac-Dunkl model. We have realized \(\mathcal{A}_n^q\) inside this model and shown how it acts irreducibly on modules of null-solutions of the \(\mathbb{Z}_2^n\) \(q\)-Dirac-Dunkl operator.

In a next step, we would like to uncover further how \(\mathcal{A}_n^q\) is related to the orthogonal polynomials of the \(q\)-Askey scheme. More precisely, it would be of interest to consider a specific representation of \(\mathcal{A}_n^q\), derive several different bases for the representation space and compute their connection coefficients. These will be expressible in terms of Gasper \& Rahman's multivariate \((-q)\)-Racah polynomials, as defined in \cite{Gasper2}. The results of \cite{I1} will then lead us to a discrete realization for the higher rank \(q\)-Bannai-Ito algebra. This will be covered in our follow-up work \cite{DeBie&DeClercq-2019}. 

\section*{Acknowledgements}
This work was supported by the Research Foundation Flanders (FWO) under Grant EOS 30889451 and G.0116.13N. We wish to thank Toon Baeyens for writing Java code that was used to verify some of the formulae in the paper. We would also like to thank the referee for valuable comments and suggestions.

\section*{Appendix A: Expressions in the fourfold tensor product}
In this appendix we will give the explicit expression for the element \(\Gamma_A^q \in\ospq^{\otimes 4}\) for some of the non-trivial sets \(A\subset \{1,2,3,4\}\). These can be obtained using the definition of the coproduct \(\Delta\) and the coaction \(\tau\) as in (\ref{def: Coproduct}), (\ref{eq:Delta(Gamma)}) and (\ref{def:tau isomorphism}).

\begingroup
\allowdisplaybreaks
\begin{align*}
\Gamma_{\{1,3\}}^q = &  \,	
\frac{-q^{1/2}}{q - q^{-1}} K^{2} P \otimes 1 \otimes K^{2} P \otimes 1 - (q-q^{-1}) A_- A_+ P \otimes A_+ K \otimes A_- K \otimes 1 \\ & + A_- A_+ P \otimes 1 \otimes K^{2} P \otimes 1 -q^{1/2} A_- K^{-1} P \otimes K^{-2} P \otimes A_+ K \otimes 1 \\ & - q^{-1/2} A_- K^{-1} P \otimes A_+ K^{-1} P \otimes K^{-2} P \otimes 1\\& + q^{-1/2} A_+ K^{-1} P \otimes K^{2} P \otimes A_- K \otimes 1 + q^{-1/2} A_- K^{-1} P \otimes A_+ K^{-1} P \otimes K^{2} P \otimes 1 \\ & - (q-q^{-1}) A_- K^{-1} P \otimes A_+ K^{-1} P \otimes A_- A_+ P \otimes 1  \\ &+ \frac{q^{-1/2}}{q-q^{-1}} K^{-2} P \otimes 1 \otimes K^{-2} P \otimes 1 + K^{-2} P \otimes 1 \otimes A_- A_+ P \otimes 1 \\ & - (q-q^{-1}) A_- K^{-1} P \otimes A_+^{2} P \otimes A_- K \otimes 1 \\ & + q^{1/2} K^{2} P \otimes A_+ K \otimes A_- K \otimes 1- q^{1/2} K^{-2}P \otimes A_+ K \otimes A_- K \otimes 1.
\end{align*}
\endgroup

\begingroup
\allowdisplaybreaks
\begin{align*}
\Gamma_{\{1,4\}}^q = &\, \frac{q^{-1/2}}{q-q^{-1}} K^{-2} P \otimes 1 \otimes 1 \otimes K^{-2} P - \frac{{q}^{1/2}}{q-q^{-1}} K^{2} P \otimes 1 \otimes 1 \otimes K^{2} P\\ & - (q-q^{-1}) A_- A_+ P \otimes A_+ K \otimes K^{2} P \otimes A_- K + q^{1/2} K^{2} P \otimes A_+ K \otimes K^{2} P \otimes A_- K \\ & -q^{1/2} A_- K^{-1} P \otimes K^{-2} P \otimes K^{-2} P \otimes A_+ K \\& - (q-q^{-1}) A_- K^{-1} P \otimes K^{-2} P \otimes A_+ K^{-1}P \otimes A_- A_+ P \\ & -q^{1/2} K^{-2} P \otimes A_+ K \otimes K^{2} P \otimes A_- K + A_- A_+ P \otimes 1 \otimes 1 \otimes K^{2} P \\ &
-(q-q^{-1}) A_- K^{-1} P \otimes A_+ K^{-1} P \otimes 1 \otimes A_- A_+ P\\&  - (q-q^{-1}) A_- K^{-1} P \otimes A_+^{2} P \otimes K^{2} P \otimes A_- K\\& - (q-q^{-1}) A_- K^{-1} P \otimes K^{-2} P \otimes A_+^{2} P \otimes A_- K - (q-q^{-1}) A_- A_+ P \otimes 1 \otimes A_+ K \otimes A_- K \\ & - q^{-1/2} A_- K^{-1} P \otimes A_+ K^{-1} P \otimes 1 \otimes K^{-2} P+ q^{-1/2} A_- K^{-1} P \otimes A_+ K^{-1} P \otimes 1 \otimes K^{2} P\\& - q^{-1/2} A_- K^{-1} P \otimes K^{-2} P \otimes A_+ K^{-1} P \otimes K^{-2} P\\& + q^{-1/2}A_+ K^{-1} P \otimes K^{2} P \otimes K^{2} P \otimes A_- K + K^{-2} P \otimes 1 \otimes 1 \otimes A_- A_+ P \\ & + (q-q^{-1})(q^{1/2}-q^{-1/2}) A_- K^{-1} P \otimes A_+ K^{-1} P \otimes A_+ K \otimes A_- K \\ & + q^{-1/2} A_- K^{-1} P \otimes K^{-2} P \otimes A_+ K^{-1} P \otimes K^{2} P  \\ & + q^{1/2} K^{2} P \otimes 1 \otimes A_+ K \otimes A_- K - q^{1/2} K^{-2} P \otimes 1 \otimes A_+ K \otimes A_- K.
\end{align*}
\endgroup

\begingroup
\allowdisplaybreaks
\begin{align*}
\Gamma_{\{1,2,4\}}^q = &\, -q^{-1/2} A_- K^{-1} P \otimes 1 \otimes A_+ K^{-1} P \otimes K^{-2} P \\ & + q^{1/2}(q-q^{-1}) A_- K^{-1} P \otimes A_+ K \otimes A_+ K \otimes A_- K \\ & -q^{1/2} A_- K^{-1} P \otimes A_+ K \otimes 1 \otimes K^{2} P-q^{1/2} K^{-2} P \otimes A_- K^{-1} P \otimes K^{-2} P \otimes A_+ K \\ & + q^{1/2} K^{-2} P \otimes A_- K^{-1} P \otimes A_+ K^{-1} P \otimes K^{2} P\\& -q^{1/2} A_- K^{-1} P \otimes 1 \otimes K^{-2} P \otimes A_+ K \\ & - (q-q^{-1}) K^{-2} P \otimes A_- A_+ P \otimes A_+ K \otimes A_- K \\& - (q-q^{-1})A_- K^{-1} P \otimes 1 \otimes A_+ K^{-1} P \otimes A_- A_+ P \\ & + K^{-2} P \otimes K^{-2} P \otimes 1 \otimes A_- A_+ P - \frac{q^{1/2}}{q-q^{-1}} K^{2} P \otimes K^{2} P \otimes 1 \otimes K^{2} P \\ & + q^{-1/2} A_- K^{-1} P \otimes 1 \otimes A_+ K^{-1} P \otimes K^{2} P \\ & - (q-q^{-1}) A_- K^{-1} P \otimes 1 \otimes A_+^{2} P \otimes A_- K \\ & - (q-q^{-1}) K^{-2} P \otimes A_- K^{-1} P \otimes A_+ K^{-1} P \otimes A_- A_+ P \\ & - q^{-1/2} K^{-2} P \otimes A_- K^{-1} P \otimes A_+ K^{-1} P \otimes K^{-2} P\\ & + q^{-1/2} K^{-2} P \otimes A_+ K^{-1} P \otimes K^{2} P \otimes A_- K \\ & + \frac{q^{-1/2}}{q-q^{-1}} K^{-2} P \otimes K^{-2} P \otimes 1 \otimes K^{-2} P \\ &-q^{1/2} K^{-2} P \otimes K^{-2} P \otimes A_+ K \otimes A_- K \\ & + q^{1/2} K^{2} P \otimes K^{2} P \otimes A_+ K \otimes A_- K + A_- A_+ P \otimes K^{2} P \otimes 1 \otimes K^{2} P \\ & - q^{-1/2}(q-q^{-1}) A_+ K^{-1} P \otimes A_- K \otimes A_+ K \otimes A_- K \\ & - (q-q^{-1}) K^{-2} P \otimes A_- K^{-1} P \otimes A_+^{2} P \otimes A_- K \\& - (q-q^{-1}) A_- A_+ P \otimes K^{2} P \otimes A_+ K \otimes A_- K  \\& +  K^{-2} P \otimes A_- A_+ P \otimes 1 \otimes K^{2} P + q^{-1/2} A_+ K^{-1} P \otimes 1 \otimes K^{2} P \otimes A_- K \\ & + q^{-1/2}A_+ K^{-1} P \otimes A_- K \otimes 1 \otimes K^{2} P.
\end{align*}
\endgroup

 \newpage

\end{document}